\newtheorem{thm}{Theorem}[section]
\newtheorem{lem}[thm]{Lemma}
\newtheorem{prop}[thm]{Proposition}
\newtheorem{df}[thm]{Definition}
\newtheorem{rem}[thm]{Remark}
\numberwithin{equation}{section}
\newcommand{\E}{\mathbb{E}}
\newcommand{\prob}{\mathbb{P}}
\renewcommand{\P}{\mathbb{P}}
\begin{document}

\title{Zero-temperature Glauber dynamics on the $3$-regular tree and the median process}
\date{\today}
\author{\Large Michael Damron \thanks{The research of MD is supported by an NSF CAREER grant.}\\ \small Georgia Tech \and \Large Arnab Sen \thanks{The research of AS is partially supported by NSF DMS 1406247.}\\ \small University of Minnesota}
\maketitle

\begin{abstract}
In zero-temperature Glauber dynamics, vertices of a graph are given i.i.d.~initial spins $\sigma_x(0)$ from $\{-1,+1\}$ with $\mathbb{P}_p(\sigma_x(0) = +1)=p$, and they update their spins at the arrival times of i.i.d.~Poisson processes to agree with a majority of their neighbors. We study this process on the 3-regular tree $\mathbb{T}_3$, where it is known that the critical threshold $p_c$, below which $\mathbb{P}_p$-a.s.~all spins fixate to $-1$, is strictly less than $1/2$. Defining $\theta(p)$ to be the $\mathbb{P}_p$-probability that a vertex fixates to $+1$, we show that $\theta$ is a continuous function on $[0,1]$, so that, in particular, $\theta(p_c)=0$. To do this, we introduce a new continuous-spin process we call the median process, which gives a coupling of all the measures $\mathbb{P}_p$. Along the way, we study the time-infinity agreement clusters of the median process, show that they are a.s.~finite, and deduce that all continuous spins flip finitely often. In the second half of the paper, we show a correlation decay statement for the discrete spins under $\mathbb{P}_p$ for a.e.~value of $p$. The proof relies on finiteness of a vertex's ``trace'' in the median process to derive a stability of discrete spins under finite resampling. Last, we use our methods to answer a question of C. Howard (2001) on the emergence of spin chains in $\mathbb{T}_3$ in finite time.
\end{abstract}

\maketitle


\section{Introduction}

\subsection{The model}


We study the majority vote model, known as zero-temperature Ising Glauber dynamics, on $\mathbb{T}_3$, the infinite $3$-regular tree with vertex set $\mathcal{V}$ and edge set $\mathcal{E}$. This is a continuous-time Markov process whose state space is $\{-1,+1\}^{\mathcal{V}}$, with vertices updating their values at times according to rate-one Poisson clocks to agree with a majority of their neighbors. We take an initial spin configuration $\sigma(0) = (\sigma_x(0))_{x \in \mathcal{V}} \in \{-1,+1\}^{\mathcal{V}}$ distributed according to the i.i.d.~Bernoulli product measure $\mu_p$, $p \in [0,1]$, where
\[
\mu_p(\sigma_x(0) = +1) = p = 1 - \mu_p(\sigma_x(0) = -1).
\]
The configuration $\sigma(t)$ evolves as $t$ increases according to the zero-temperature limit of Glauber dynamics. To describe this, define the energy (or local cost function) of a vertex $x$ at time $t$ as
\[
e_x(t) = - \sum_{y \in \partial x} \sigma_x(t) \sigma_y(t),
\]
where $\partial x$ is the set of three neighbors of $x$ in $\mathbb{T}_3$. 
Note that $e_x(t) $ is  the number of neighbors of $x$ that disagree with $x$ minus the number of neighbors that agree with $x$ at time $t$. Each vertex has an exponential clock of rate 1 and clocks at different vertices are independent of each other. When a vertex's clock rings, it makes an update according to the rules
\begin{equation}\label{eq: energy_rules}
\sigma_x(t) = \begin{cases}
- \sigma_x(t^-) & \quad \text{if } e_x(t^-) > 0 \\
\sigma_x(t^-) & \quad \text{if } e_x(t^-) < 0
\end{cases}.
\end{equation}
Hence each spin flips with probability 1 (or 0) when it disagrees (or agrees) with the majority of its neighbors. Write $\mathbb{P}_p$ for the joint distribution of $(\sigma(0),\omega)$, the initial spins and the dynamics realizations (Poisson clocks).

We are interested in the limiting behavior of spins $\sigma_x(t)$ as $t \to \infty$, and we first note that this limit exists a.s., due to a general theorem of Nanda-Newman-Stein (see the remark after Theorem~2 in \cite{NNS}). Their result implies that on general odd degree graphs with enough symmetries (including $\mathbb{T}_3$), for any $p\in [0,1]$, $\mathbb{P}_p$-a.s. every spin flips only finitely often. So we can define the limit
\begin{equation}\label{eq: discrete_limit}
\sigma_x(\infty) = \lim_{t \to \infty} \sigma_x(t) \quad\text{for } x \in \mathcal{V}.
\end{equation}
Even basic properties of the limiting configuration $(\sigma_x(\infty))$ are not established, and this is in part due to the fact that the spins at time infinity are highly correlated. Although $(\sigma_x(t))$ for finite $t$ has a range of dependence that decays exponentially (Lemma~\ref{lem: chronological}), for arbitrarily large times, information can propagate arbitrarily large distances, and this decay is lost. For this reason, we cannot apply results that are designed for nearly independent processes, and must instead rely on tools from invariant percolation. Many of our methods apply to some degree to $\mathbb{T}_d$ for $d \geq 3$ and odd, and therefore we believe our perspective will be useful to analyze similar questions on these trees. There are, however, notable exceptions: seen in Lemmas~\ref{lem: no_degree_one}, \ref{lem: agree_with_neighbor}, and \ref{l:NNS_finite_osc}. Each of these can be traced back to defining a suitable energy function for our continuous-spin analogue, the median process. The natural energy $E_x(t)$ defined in \eqref{eq: natural_energy} cannot increase by a flip of the continuous spin at $x$ when $x$ has degree 3, but this is false when the degree is at least 5.

\subsection{Main results}

Our results are of two types: those for the discrete spin model defined above, and those for a continuous spin variant we call the median process.

\subsubsection{Results for the discrete model}
Due to \eqref{eq: discrete_limit}, we can define
\begin{equation}\label{eq: theta_def}
\theta(p) = \mathbb{P}_p(\sigma_o(\infty) = +1),
\end{equation}
where $o$ (``the root'') is a distinguished vertex of $\mathbb{T}_3$, and therefore
\begin{equation}\label{eq: p_c_def}
p_c = \sup \left\{p\in [0,1] : \theta(p) = 0\right\}.
\end{equation}
By a straightforward coupling of initial configurations and clocks, it can be seen by attractiveness (see the paragraph below \eqref{eq: infinite_chronological}) that $\theta(p)$ is nondecreasing in $p$, and so $\theta(p)=0$ for $p \in [0,p_c)$. By symmetry, $\theta(p) = 1$ for $p \in (1-p_c,1]$. It has been shown \cite[Theorem~4]{HowardTree} that $(2-\sqrt{3})/4 \leq p_c < 1/2$ for $\mathbb{T}_3$.

Our first result states that $\theta$ is a continuous function.

\begin{thm}\label{thm: continuity}
The function $\theta$ is absolutely continuous in $p$. In particular, $\theta(p_c)=0$.
\end{thm}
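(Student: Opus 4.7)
The plan is to realize all the measures $\mathbb{P}_p$ simultaneously on one probability space via a single continuous-spin process (the median process), and then interpret $\theta$ as the cumulative distribution function of a random variable whose law is absolutely continuous.

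First, I would build the coupling. Assign i.i.d.\ Uniform$[0,1]$ variables $(U_x)_{x\in\mathcal{V}}$ and independent rate-$1$ Poisson clocks $\omega$. For each $p\in[0,1]$ let $\sigma_x^p(0)=+1$ if $U_x\le p$ and $-1$ otherwise, so that $\sigma^p(0)\sim\mu_p$. Running the dynamics with the common clocks is attractive, hence $p\mapsto\sigma_x^p(t)$ is nondecreasing for every $t$ and $x$, and I can define the continuous spin $m_x(t)=\inf\{p:\sigma_x^p(t)=+1\}$, with $m_x(0)=U_x$. A direct check shows that when the clock at $x$ rings, $m_x$ updates to the median of the three neighboring values $\{m_y(t):y\in\partial x\}$, because majority vote on the indicators $\mathbf{1}\{m_y\le p\}$ becomes the median operation at the threshold level. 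This is the median process that couples all $\mathbb{P}_p$.

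Since $\sigma_o^p(\infty)=+1$ if and only if $m_o(\infty)\le p$ (up to a probability-zero ambiguity), $\theta$ is precisely the CDF of $m_o(\infty)$, and the task reduces to proving that the law of $m_o(\infty)$ is absolutely continuous on $[0,1]$. I would leverage the earlier structural results about the median process promised in the abstract: $(m_x(t))$ fixates and the time-infinity agreement cluster $C_o=\{x:m_x(\infty)=m_o(\infty)\}$ is a.s.\ finite. Because the median of three numbers is one of the three, an induction on the chronological sequence of updates shows that $m_o(t)\in\{U_x:x\in\mathcal{V}\}$ for every $t\ge 0$, so $m_o(\infty)=U_{y^\ast}$ for a random vertex $y^\ast\in C_o$.

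The main obstacle is passing from ``$m_o(\infty)=U_{y^\ast}$ with $y^\ast$ in a finite random cluster'' to absolute continuity of the law: the random cluster $C_o$ depends on the very values $(U_x)$ whose distribution I want to analyze, so naive conditioning is illegitimate. I would handle this via a finite resampling argument on the uniforms inside a suitable finite trace of $o$, combined with a combinatorial identification of $y^\ast$ as the vertex realizing the median of $\{U_x:x\in C_o\}$ (which should fall out of how medians propagate through a finite agreement subtree of odd size on $\mathbb{T}_3$). Granted that, for any Lebesgue-null set $A\subset[0,1]$ one obtains
\[
\mathbb{P}(m_o(\infty)\in A) \;=\; \sum_{C}\mathbb{P}\bigl(C_o=C,\ \mathrm{median}(U_x:x\in C)\in A\bigr),
\]
and each term vanishes because, after decoupling via resampling, the median of $2k+1$ i.i.d.\ uniforms has a bounded density. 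This yields absolute continuity of $\theta$. The ``in particular'' claim $\theta(p_c)=0$ then follows from $\theta\equiv 0$ on $[0,p_c)$ combined with continuity of $\theta$ at $p_c$.
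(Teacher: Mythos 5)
Your setup follows the paper's approach: build the median process, observe that $\theta$ is the CDF of $U_o(\infty)$, and note that after fixation (finitely many flips, which the paper proves via Proposition 2.3 together with Theorem 1.7) one has $U_o(\infty)=U_{y^\ast}(0)$ for some random vertex $y^\ast$. Up to that point you are essentially reproducing the paper.

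The final step is where you go astray. You identify a ``main obstacle'' that is not actually there, and then propose to close it with a claim that is both unnecessary and unjustified. You assert that $y^\ast\in C_o$ and that $U_o(\infty)$ equals the median of $\{U_x(0):x\in C_o\}$ over an odd-sized cluster, and then you plan a resampling argument. None of this is needed, and the median-of-the-cluster claim is not true in general: the vertex $y^\ast$ whose initial value is ultimately inherited by $o$ need not lie in $o$'s agreement cluster (its own limiting value may be something else entirely, since $U_{y^\ast}(0)$ can be passed along while $y^\ast$ itself keeps updating), and there is no reason the cluster size must be odd. The paper instead uses a one-line subadditivity bound that bypasses any conditioning on the random cluster: once $U_o(\infty)=U_{y^\ast}(0)$ a.s.\ for some $y^\ast\in\mathcal{V}$, for any Lebesgue-null $B\subset\mathbb{R}$ one has
\[
\mathbb{P}(U_o(\infty)\in B)\ \le\ \sum_{y\in\mathcal{V}}\mathbb{P}(U_y(0)\in B)\ =\ 0,
\]
since each $U_y(0)$ is uniform and $\mathcal{V}$ is countable. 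This immediately gives absolute continuity of the law of $U_o(\infty)$, hence of $\theta$, and then $\theta(p_c)=0$ follows as you say. So the correct conclusion was already within reach from the fact you stated; the cluster-median detour introduces a genuine gap where a union bound suffices.
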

\noindent
The proof of Theorem~\ref{thm: continuity} will be given in Section~\ref{sec: continuity_proof}.

The next result is a type of decay of correlations for the the limiting spins $(\sigma_x(\infty))$. Write $\text{dist}$ for the graph distance on $\mathbb{T}_3$ and for $r\geq 0$, define the sigma-algebras
\begin{align*}
\Sigma_{\leq r} &= \sigma\left( \sigma_x(\infty) : \text{dist}(o,x) \leq r\right) \\
\Sigma_{\geq r} &= \sigma\left( \sigma_x(\infty) : \text{dist}(o,x) \geq r\right).
\end{align*}
For $r,R$ with $0 \leq r \leq R < \infty$, and $p \in [0,1]$, define the strong mixing coefficient
\[
\alpha_{r,R}(p) = \sup\left\{|\mathbb{P}_p(A \cap B) - \mathbb{P}_p(A) \mathbb{P}_p(B)| : A \in \Sigma_{\leq r}, B \in \Sigma_{\geq R}\right\}.
\]
The following theorem, whose proof appears in Section~\ref{sec: correlation_decay}, states that the coefficient converges to zero as $R \to \infty$ for fixed $r$. 
\begin{thm}\label{thm: correlation_decay}
For Lebesgue-a.e. $p \in [0,1]$ one has for every fixed $r \geq 0$,
\[
\lim_{R \to \infty} \alpha_{r,R}(p) = 0.
\]
\end{thm}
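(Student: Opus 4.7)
\textit{Proof strategy for Theorem~\ref{thm: correlation_decay}.}

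The plan is to combine the median-process coupling (which realizes every $\mathbb{P}_p$, $p\in[0,1]$, on a common probability space) with the finite-trace property of the median process in order to build finite-range approximations to the discrete spins $\sigma_y^{(p)}(\infty)$ that are uniformly accurate in the vertex $y$, for Lebesgue-a.e.\ $p$.

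First I would realize all $\mathbb{P}_p$ through the median-process coupling, under which each $\sigma_y^{(p)}(\infty)$ is determined (away from a $\mathbb{P}$-null event) by the sign of $p-M_y(\infty)$ for the time-infinity median spin $M_y(\infty)\in[0,1]$. By Theorem~\ref{thm: continuity} the distribution of $M_o(\infty)$ is absolutely continuous, so by Lebesgue differentiation, for a.e.\ $p$ we have $\mathbb{P}(|M_o(\infty)-p|<\delta) = O(\delta)$ as $\delta\downarrow 0$; by translation invariance, the same bound holds uniformly for every $M_y(\infty)$.

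Next I would use the finite-trace property established earlier in the paper: $M_y(\infty)$ is a deterministic function of the iid initial spins and Poisson clocks on an a.s.\ finite random set $T(y)\subseteq\mathcal{V}$. Hence for every $\eta>0$ there exists $K=K(\eta)$ such that the $K$-local approximation $M_y^{[K]}(\infty)$---measurable with respect to the iid data in $B_K(y)$ and defined to equal $M_y(\infty)$ on $\{T(y)\subseteq B_K(y)\}$---agrees with $M_y(\infty)$ with probability at least $1-\eta$, uniformly in $y$. Combining with the density bound, the thresholded approximation $\sigma_y^{(p),[K]}(\infty):=\mathrm{sign}(p-M_y^{[K]}(\infty))$ satisfies, for Lebesgue-a.e.\ $p$, $\sup_y \mathbb{P}(\sigma_y^{(p)}(\infty)\neq\sigma_y^{(p),[K]}(\infty))\to 0$ as $K\to\infty$: a disagreement forces either $T(y)\not\subseteq B_K(y)$ or $p$ to lie in a shrinking neighborhood of $M_y(\infty)$. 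This is the ``stability of discrete spins under finite resampling'' promised in the introduction.

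This gives ``cylinder decorrelation'' directly: for $A\in\Sigma_{\leq r}$ (a finite $\sigma$-algebra), replacing each $\sigma_y^{(p)}(\infty)$ by $\sigma_y^{(p),[K]}(\infty)$ for $|y|\leq r$ yields $A'$ measurable with respect to iid data in $B_{K+r}(o)$ with $\mathbb{P}(A\triangle A')$ small. For $R>2K+r$, the analogous approximation for any finite collection of spins $\sigma_x(\infty)$, $|x|\geq R$, uses data in $\bigcup_x B_K(x)\subseteq B_{K+r}(o)^c$, hence is $\mathbb{P}_p$-independent of $A'$. The hard part will be extending this decorrelation uniformly to \emph{all} $B\in\Sigma_{\geq R}$, not only cylinders, since $B$ may depend on infinitely many distant spins, over which a naive union bound diverges. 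My plan is to reduce, via reverse martingale convergence applied to the finite $\sigma$-algebra $\Sigma_{\leq r}$, the claim $\alpha_{r,R}(p)\to 0$ to triviality of the tail $\sigma$-algebra $\Sigma_\infty:=\bigcap_{R\geq 0}\Sigma_{\geq R}$ under $\mathbb{P}_p$, and then to deduce triviality by showing that $\Sigma_\infty$ is contained, modulo $\mathbb{P}_p$-null sets, in the tail $\sigma$-algebra of the iid input data $(\sigma_y(0),\omega_y)_{y\in\mathcal{V}}$---which is trivial by Kolmogorov's $0$--$1$ law. This containment is the technical heart: combining the $K$-local approximations above with a ``reverse-trace'' finiteness statement (for each finite $S\subseteq\mathcal{V}$, $\{y:T(y)\cap S\neq\emptyset\}$ is a.s.\ finite, which itself should follow from trace integrability via mass transport on $\mathbb{T}_3$), every $\Sigma_\infty$-event can be shown to equal, $\mathbb{P}_p$-a.s., an event measurable with respect to the iid data outside an arbitrary finite set $S$, completing the proof.
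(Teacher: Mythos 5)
There is a genuine gap, and it lies at the very first use of the finite-trace property. You write: ``$M_y(\infty)$ is a deterministic function of the iid initial spins and Poisson clocks on an a.s.\ finite random set $T(y)\subseteq\mathcal{V}$.'' That is \emph{not} what the paper's finite-trace result (Theorem~\ref{thm: finite_trace}/Proposition~\ref{prop: finite_trace}) asserts, and it is not established anywhere in the paper. The trace $\mathrm{Tr}(x)$ is defined in Definition~\ref{def: trace} as the set of $y$ with $U_y(t)=U_x(0)$ for some $t$: it records the set of vertices that \emph{receive} $x$'s initial label, i.e.\ the \emph{forward influence} of the single input at $x$. It says nothing about the set of inputs that \emph{determine} $U_y(\infty)$. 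In fact, local determination of $U_y(\infty)$ (a ``finitary factor of iid'' statement) is a substantially stronger claim: even though Theorem~\ref{thm: continuous_theorem} shows $o$ flips finitely often and $U_o(\infty)=U_y(0)$ for some random $y$, certifying that $o$ never flips again after some time depends on the entire configuration at arbitrarily large distances, so there is no obvious finite stopping set. This is exactly the difficulty the paper circumvents; it never proves local determination.

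Because the paper only has the forward-influence statement, it proceeds by the dual coupling argument: in Lemma~\ref{lem: resample_spin} it shows that changing a single initial spin at $o$ alters $\sigma_y(\cdot)$ for only finitely many $y$ (using that $\tau^+(\cdot)\Delta\tau^-(\cdot)=\mathrm{Tr}(o)$), extends this to resampling the clock at $o$ (Lemma~\ref{lem: resample_all}), and then constructs two \emph{independent} auxiliary evolutions $(\sigma^{(1)}(0),\omega^{(1)})$ (resampled outside $B_M(o)$) and $(\sigma^{(2)}(0),\omega^{(2)})$ (resampled inside $B_M(o)$). The first agrees with $\sigma(\cdot)$ near the root with high probability by finite speed of propagation (Lemma~\ref{lem: chronological}); the second agrees with $\sigma(\cdot)$ on all far-away spins because resampling finitely many inputs perturbs only finitely many vertices, a.s. That yields the two error terms \eqref{eq: correlation_term_1}--\eqref{eq: correlation_term_2} directly, with no need for local determination or tail triviality.

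Two smaller remarks. First, your appeal to absolute continuity and Lebesgue differentiation to get $\mathbb{P}(|M_o(\infty)-p|<\delta)=O(\delta)$ is not needed: the Lebesgue-a.e.\ restriction enters the paper's proof through the Fubini step in Lemma~\ref{lem: resample_spin}, not through a density bound near $p$. Second, your proposed ``reverse-trace'' finiteness ($\{y:T(y)\cap S\neq\emptyset\}$ a.s.\ finite for finite $S$) would require integrability of $\#T(y)$, not merely a.s.\ finiteness; mass transport equates \emph{expectations}, so an a.s.\ finite but non-integrable $\#T(y)$ is compatible with an a.s.\ infinite reverse set. The reduction of $\alpha_{r,R}(p)\to 0$ to triviality of $\Sigma_\infty$ via reverse martingale convergence (using that $\Sigma_{\le r}$ is finite) is sound and would be a legitimately different route from the paper's direct coupling, but the key inputs you want to feed into it are not available from what has been proved.
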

\noindent
Using the fact that the configuration $(\sigma_x(\infty) : x \in \mathbb{T}_3)$ is a factor of i.i.d.\  process, it follows that the two-point correlation between $\sigma_x(\infty)$ and $\sigma_y(\infty)$ decays exponentially in the distance between vertices $x$ and $y$ for every $p$  (see \cite[Theorem 3.1]{BSV_corr_decay}). Theorem~\ref{thm: correlation_decay} involves events that depend on finitely many spins near the root, but infinitely many spins away from the root. We are only able to show the equation of this theorem for a.e.\ $p$ because it is obtained from Theorem~\ref{thm: finite_trace} about the median process after applying a certain projection (in the proof of Theorem~\ref{thm: resample_spin}) and Fubini's theorem.

To derive Theorem~\ref{thm: correlation_decay}, we prove a type of stability for the process $\sigma(\cdot)$ under finite perturbations. Following the notation in Section~\ref{sec: correlation_decay}, we define the discrete evolutions $\sigma^+(\cdot)$ and $\sigma^-(\cdot)$ using the same Poisson clocks $\omega$ as the original process, but starting from initial conditions $\sigma^+(0)$ and $\sigma^-(0)$. The first, $\sigma^+(0)$, equals $\sigma(0)$ at vertices apart from $o$, and equals $+1$ at $o$, whereas $\sigma^-(0)$ equals $\sigma(0)$ at vertices apart from $o$, and equals $-1$ at $o$. Their symmetric difference is defined
\[
\sigma^+(\cdot) \Delta \sigma^-(\cdot) = \{y \in \mathcal{V} : \sigma_y^+(t) \neq \sigma_y^-(t) \text{ for some } t \geq 0\}.
\]
The following theorem is restated as Lemma~\ref{lem: resample_spin} and is proved in Section~\ref{sec: correlation_decay}.
\begin{thm}\label{thm: resample_spin}
For Lebesgue-a.e. $p \in [0,1]$, one has
\[
\mathbb{P}_p\left( \#\sigma^+(\cdot) \Delta \sigma^-(\cdot) < \infty\right) = 1.
\]
\end{thm}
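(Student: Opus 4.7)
The plan is to lift the discrete question about $\sigma^+$ and $\sigma^-$ to the continuous-spin median process, which simultaneously couples all the measures $\mathbb{P}_p$, and then to extract the a.e.-$p$ statement from the a.s.\ finiteness of the vertex ``trace'' (Theorem~\ref{thm: finite_trace}, invoked here as a black box) by integrating in the threshold parameter $p$.

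First, I would realize both $\sigma^+(\cdot)$ and $\sigma^-(\cdot)$ on a single enlarged space carrying the median process. Let $(m_x(0))_{x\neq o}$ be i.i.d.\ uniform on $[0,1]$, let $\omega$ be the usual Poisson clocks, and let $V^+\sim\mathrm{Unif}[1-p,1]$ and $V^-\sim\mathrm{Unif}[0,1-p]$ be independent, providing the two alternative initial continuous values at $o$. Running the median process from each of these initial configurations with the same clocks $\omega$ produces continuous-spin evolutions $m^+(\cdot)$ and $m^-(\cdot)$, and thresholding at level $1-p$ recovers $\sigma^+(\cdot)$ and $\sigma^-(\cdot)$. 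The median dynamics (taking at each clock ring the median of the three neighbor values) is monotone in its initial data, so $m^+_y(t)\ge m^-_y(t)$ for every $y$ and $t$, and a vertex $y$ can contribute to $\sigma^+(\cdot)\Delta\sigma^-(\cdot)$ only if the interval $[m^-_y(t),m^+_y(t)]$ contains $1-p$ at some $t\ge 0$.

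Next, I would control the random set
\[
S(p)=\bigl\{y\in\mathcal{V}: 1-p\in[m^-_y(t),m^+_y(t)] \text{ for some } t\ge 0\bigr\},
\]
which contains $\sigma^+(\cdot)\Delta\sigma^-(\cdot)$. For a fixed $y$, the trajectories $m^\pm_y(\cdot)$ visit only finitely many values in $[0,1]$ by Theorem~\ref{thm: finite_trace}; the set of thresholds $1-p$ through which $y$ can contribute is then a finite random union of intervals, whose total Lebesgue measure is bounded by the combined total variation of $m_y^+$ and $m_y^-$. Taking expectations and applying Fubini in $p$, it suffices to show that $\sum_y\mathbb{E}\bigl[\mathrm{TV}(m^+_y) + \mathrm{TV}(m^-_y)\bigr]$ (or an appropriate proxy) is finite, which would force $\#S(p)<\infty$ for Lebesgue-a.e.\ $p$.

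The main obstacle will be the summability step: pointwise finiteness of the trace does not by itself rule out infinitely many distant vertices each picking up a small but nonzero influence from $o$. To address this I would pursue a mass-transport/unimodularity argument on $\mathbb{T}_3$ that expresses each nonzero increment of $m^+_y-m^-_y$ in terms of perturbations ultimately inherited from the single initial difference $V^+-V^-$ at $o$. Combined with finiteness of the trace at $o$ itself and the monotone coupling, this should yield an $L^1$ bound on $\sum_y(m^+_y(\infty)-m^-_y(\infty))$ and, via Fubini, the desired a.e.-$p$ finiteness of $\sigma^+(\cdot)\Delta\sigma^-(\cdot)$.
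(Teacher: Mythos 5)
Your high-level strategy (lift to the median process, exploit finiteness of the trace, then Fubini in $p$) is the same as the paper's, but the implementation diverges in a way that leaves a genuine gap. The paper runs a \emph{single} median process $U(\cdot)$ and forms two projections of that same process, thresholding at $U_o(0)$ with $\le$ versus $<$:
\[
\tau^+_x(t) = 2\mathbf 1_{\{U_x(t) \le U_o(0)\}}-1, \qquad \tau^-_x(t) = 2\mathbf 1_{\{U_x(t) < U_o(0)\}}-1.
\]
Because the threshold is exactly the initial value at the root, these projections disagree at $y$ at some time $t$ if and only if $U_y(t)=U_o(0)$, so $\tau^+(\cdot)\Delta\tau^-(\cdot)=\mathrm{Tr}(o)$ as an \emph{identity}, and Theorem~\ref{thm: finite_trace} immediately gives a.s.\ finiteness. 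The remaining work is only to check, as in Lemma~\ref{lem: projection}, that $(\tau^+,\tau^-)$ has the distribution of $(\sigma^+,\sigma^-)$ under the mixture measure $\widetilde{\mathbb P}$, and then disintegrate in the mixing parameter. No quantitative control on the "influence'' of $o$ is ever needed.

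You instead run \emph{two} median processes $m^+,m^-$ with initial data that differ at $o$, threshold both at a fixed level, and try to control the set $S(p)$ of vertices where the interval $[m^-_y(t),m^+_y(t)]$ straddles the threshold. The inclusion $\sigma^+(\cdot)\Delta\sigma^-(\cdot)\subseteq S(p)$ and the monotonicity of the median update are both correct. But this route commits you to showing that
\[
\sum_{y\in\mathcal V}\mathbb E\Bigl[\operatorname{Leb}\Bigl(\textstyle\bigcup_{t\ge 0}[m^-_y(t),m^+_y(t)]\Bigr)\Bigr]<\infty,
\]
i.e.\ that the total continuous-spin discrepancy accumulated across the tree is integrable. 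That is an $L^1$ influence bound on the coupled pair, which is substantially stronger than anything Theorem~\ref{thm: finite_trace} gives: finiteness of the trace controls how far the \emph{exact} value $U_o(0)$ propagates in a single process, not how far a small discrepancy between two processes spreads. You acknowledge this as "the main obstacle,'' but the proposed mass-transport fix is only a sketch, and it is not clear what conserved or bounded quantity would do the job (the median is not an averaging operation, so no sum rule is available). Two smaller points: the assertion that each trajectory $m^\pm_y(\cdot)$ visits only finitely many values follows from Theorem~\ref{thm: continuous_theorem}, not Theorem~\ref{thm: finite_trace}; and the claim that the Lebesgue measure of $\bigcup_t[m^-_y(t),m^+_y(t)]$ is bounded by the total variation of $m^+_y$ plus that of $m^-_y$ needs justification and is not literally correct as stated. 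The cleanest way to repair your argument is simply to collapse the two median processes into one and use the two-sided threshold at $U_o(0)$, at which point the summability question disappears.
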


Our last result on the discrete model answers a question of Howard. In his notation, we say that a spin chain is a doubly-infinite path of vertices all of whose spins agree. Let $\mathbb{T}^{+\text{chains}}(t)$ (respectively $\mathbb{T}^{-\text{chains}}(t)$) denote the subgraph of $\mathbb{T}_3$ generated by those vertices belonging to $+1$ (respectively $-1$) spin chains at time $t$ and put $\mathbb{T}^{\text{chains}}(t) = \mathbb{T}^{+\text{chains}}(t) \cup \mathbb{T}^{-\text{chains}}(t)$. The graphs $\mathbb{T}^{+\text{chains}}, \mathbb{T}^{-\text{chains}},$ and $\mathbb{T}^{\text{chains}}$ are defined similarly, using $t=\infty$. Note that spin chains are stable under the majority updates, and for any $x \in \mathcal{V}$ one has $\sigma_x(\infty) = +1$ if and only if $x \in \mathbb{T}^{+\text{chains}}$ (similarly for $-1$ chains). On \cite[p.~739]{HowardTree}, Howard states ``A central question is whether every vertex joins a spin chain in \emph{finite} time, or, equivalently, that, as a vertex set, $\mathbb{T}^{\text{chains}}\uparrow \mathbb{T}_3$ as $t \to \infty$.'' The next theorem shows that the answer to this question is yes.
\begin{thm}\label{thm: spin_chains}
Let $p \in [0,1]$. Then
\[
\mathbb{P}_p(o \in \mathbb{T}^{\text{chains}}(t) \text{ for some } t < \infty) = 1.
\]
\end{thm}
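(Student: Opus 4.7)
The plan is to pass to the median-process coupling, under which $\sigma_x^{(p)}(t)=+1$ iff $m_x(t)\leq p$, so that ``$o$ belongs to a $+$spin chain at time $t$ under $\P_p$'' translates to ``there exists a doubly-infinite path $P\ni o$ with $m_v(t)\leq p$ for every $v\in P$,'' and symmetrically for $-$chains with $>p$. Such median chains at level $p$ are stable in $t$: if the chain holds at time $t^-$ and some $v_i\in P$'s clock rings at time $t$, the update sets $m_{v_i}(t)$ to the median of three neighbor values, and since two of these ($m_{v_{i-1}}(t^-)$ and $m_{v_{i+1}}(t^-)$) are $\leq p$, the median is bounded above by the larger of the two, which is $\leq p$. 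Hence the event in Theorem~\ref{thm: spin_chains} is monotone in $t$, and the task reduces to finding such a chain at some finite time.

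By the median-process analogue of Nanda--Newman--Stein (Lemma~\ref{l:NNS_finite_osc}), $m_v$ fixates a.s.\ to $m_v(\infty)$; and by the median analogue of Lemma~\ref{lem: no_degree_one}, the set $A_p=\{v:m_v(\infty)\leq p\}$ has minimum degree $\geq 2$ in its induced subgraph on $\mathbb{T}_3$ (any clock ring of $v\in A_p$ after local fixation would otherwise push $m_v>p$, contradicting fixation). On the tree, minimum degree $\geq 2$ forces every vertex of $A_p$ to lie on a doubly-infinite path inside $A_p$. Assuming WLOG $m_o(\infty)\leq p$ (by symmetry the case $m_o(\infty)>p$ is handled identically with $-$chains), there is a doubly-infinite path $P^\star\subseteq A_p$ through $o$.

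The hard step is to realize a median chain through $o$ at a \emph{finite} time: the median-fixation times along $P^\star$ can be unbounded, so $P^\star$ itself may never have all its vertices simultaneously at $m\leq p$. To get around this, I would use the a.s.\ finiteness of time-infinity agreement clusters together with an oriented-percolation argument on the two binary subtrees hanging from $o$'s children. For large deterministic $t_\star$, let $G(t_\star)=\{v\in A_p:m_v(t_\star)\leq p\}$; by translation invariance and a.s.\ median fixation, $\P_p(v\in G(t_\star)\mid v\in A_p)\to 1$ as $t_\star\to\infty$. The chronological exponential decay (Lemma~\ref{lem: chronological}) controls correlations between distant events $\{v\in G(t_\star)\}$, and a Liggett--Schonmann--Stacey-type stochastic domination argument dominates the $G$-process on each binary $A_p$-subtree below $o$ by an independent site percolation whose parameter tends to $1$ as $t_\star\to\infty$, hence supercritical on the binary tree for $t_\star$ large. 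This produces infinite $G(t_\star)$-rays from two distinct children of $o$ inside $A_p$, whose concatenation at $o$ is a median chain at level $p$ at time $t_\star$, and therefore a genuine spin chain through $o$ at time $t_\star$ under $\P_p$. The main obstacle I expect is the quantitative passage from the correlation decay of Lemma~\ref{lem: chronological} to a supercritical independent domination on the binary $A_p$-subtree, which should be handled by a block/renormalization argument at a mesoscopic scale.
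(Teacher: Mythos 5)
Your reduction is sound up through the point where you observe that $A_p=\{v: U_v(\infty)\leq p\}$ has minimum degree $\geq 2$ (which indeed follows from item 3 of Lemma~\ref{lem: continuous_limit}, i.e.\ $U_v(\infty)$ is the median of its neighbours' limiting values) and that the hard step is to realize a chain through $o$ at a \emph{finite} time. But your proposed finishing step --- a Liggett--Schonmann--Stacey domination of the "frozen by time $t_\star$" field by a supercritical independent site percolation --- has a genuine gap: LSS-type domination requires bounded (or at least exponentially decaying in a locally determined sense) range of dependence, and the event $\{v\in A_p \text{ and } U_v(t)\leq p \text{ for all } t\geq t_\star\}$ is not locally determined. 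It depends on the entire tail of the process, and the chronological-path bound of Lemma~\ref{lem: chronological} only controls influence over a \emph{finite} time window $[0,T]$; it says nothing about the infinite-time events that define $A_p$ and $G(t_\star)$. Moreover, the random subgraph $A_p$ itself is long-range correlated, so you are trying to run a domination argument simultaneously on a random target graph and for a random field on it, neither of which is amenable to the block/renormalization comparison you sketch.

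The paper avoids this obstacle with a soft, invariance-based argument rather than a quantitative domination. It first proves Theorem~\ref{thm: infinitely_many_ends}: every component of $\mathbb{G}_+$ has infinitely many ends (this uses finiteness of agreement clusters, Theorem~\ref{thm: finite_agreement_clusters}, to rule out the two-ended case). Then it applies Lemma~\ref{cor: percolation} with $\eta(e)=\mathbf{1}\{\sigma_x(\infty)=\sigma_y(\infty)=+1\}$ and $\eta_n(e)=\mathbf{1}\{\sigma_x(t)=\sigma_y(t)=+1 \ \forall t\geq n\}$: these are invariant bond percolations, $\eta_n\uparrow\eta$ monotonically, and the marginal densities converge. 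The conclusion $\mathbb{P}(D_{\eta_n}(o)\mid A_\eta(o),D_\eta(o))\to 1$, which hinges on the triple-point graph and a mass-transport argument in Proposition~\ref{prop: percolation}, produces the doubly-infinite $+$path at finite time without any explicit mixing or domination estimate. The key ingredient you did not invoke is the "at least three ends" (in fact infinitely many ends) property, which is exactly what the invariant-percolation machinery needs to convert density convergence into survival of an infinite $\eta_n$-cluster. If you want to salvage your route, you would need a surrogate for that monotone invariant-percolation lemma, because a direct domination by product measure is not available here.
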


To prove Theorem~\ref{thm: spin_chains}, we show the following qualitative description of the limiting configuration $\sigma(\infty)$. We define the subgraph $\mathbb{G}_+$ of $\mathbb{T}_3$ generated by those vertices $x$ with $\sigma_x(\infty) = +1$. A component of $\mathbb{G}_+$ has infinitely many ends if for any $M>0$, there is a finite subgraph of $\mathbb{G}_+$ whose removal from $\mathbb{G}_+$ splits it into at least $M$ components.
\begin{thm}\label{thm: infinitely_many_ends}
For any $p \in [0,1]$ with $\theta(p)>0$, 
\[
\mathbb{P}_p(\text{each component of }\mathbb{G}_+ \text{ has infinitely many ends})=1.
\]
\end{thm}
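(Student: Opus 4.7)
First I reduce the problem to ruling out bi-infinite $+$-path components. Since $\sigma_x(\infty) = +1$ if and only if $x$ lies on a $+1$-spin chain, every vertex of $\mathbb{G}_+$ has at least two neighbors in $\mathbb{G}_+$, so each component $C$ is a subtree of $\mathbb{T}_3$ of minimum degree $2$. Such a subtree has no leaves, so every vertex lies on some bi-infinite path inside $C$ and $C$ has at least two ends; moreover, a single degree-$3$ vertex produces three disjoint rays, and iterating the branching yields infinitely many ends. Thus it suffices to show that a.s.\ no component of $\mathbb{G}_+$ is a bi-infinite path.

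Assume for contradiction that the probability that $o$ lies on a bi-infinite-path component of $\mathbb{G}_+$ is positive; by $\mathrm{Aut}(\mathbb{T}_3)$-invariance of the factor-of-i.i.d.\ limit $\sigma(\infty)$, the density of such vertices is then positive. Write the component as $C = \{v_i\}_{i\in\mathbb{Z}}$ with $v_0 = o$, let $w_i$ denote the off-path neighbor of $v_i$ (so $\sigma_{w_i}(\infty) = -1$), and let $\tau_{v_i}, \tau_{w_i}$ be the a.s.\ finite last-flip times provided by Nanda-Newman-Stein; set $\tau_i^{*} = \tau_{v_i} \vee \tau_{w_i}$.

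The key dynamical step is the bound $\tau_{v_{i\pm 1}} \leq \tau_i^{*}$. For $t > \tau_i^{*}$ we have $\sigma_{v_i}(t) = +1$ and $\sigma_{w_i}(t) = -1$, so at any ring of $v_i$'s Poisson clock during an interval where $\sigma_{v_{i-1}}(t) = -1$, two of $v_i$'s neighbors (namely $v_{i-1}$ and $w_i$) would be $-1$, and $v_i$ would flip from $+1$, contradicting its fixation. Since $v_i$'s clock is an independent Poisson process, the Lebesgue measure of $\{t > \tau_i^{*} : \sigma_{v_{i-1}}(t) = -1\}$ must therefore be $0$ almost surely; by right-continuity this set is empty, giving $\tau_{v_{i-1}} \leq \tau_i^{*}$, and the case $v_{i+1}$ is symmetric.

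Iterating this inequality in both directions along $C$ gives $\tau_{v_0} \leq \tau_{v_n} \vee \max_{1 \leq k \leq n} \tau_{w_k}$ and $\tau_{v_n} \leq \tau_{v_0} \vee \max_{0 \leq k \leq n-1} \tau_{w_k}$; combining these (and using the a.s.\ absence of ties among the $\tau_{v_k}$'s) yields $\max(\tau_{v_0},\tau_{v_n}) \leq \max_{0 \leq k \leq n}\tau_{w_k}$ almost surely. To close, I would apply the mass-transport principle on $\mathbb{T}_3$: for each $v_k$ on a $2$-ended $+$-component, send unit mass to the nearest index $\ell$ along $C$ with $\tau_{w_\ell} \geq \tau_{v_k}$ (a.s.\ well-defined and at finite distance by the above). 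The out-flow is $1$ per such $v_k$, while stationarity of $(\tau_{v_k},\tau_{w_k})_{k\in\mathbb{Z}}$ along $C$ constrains the in-flow at a typical $w$-vertex; combined with the parallel constraint on the $-$-side (the $\mathbb{G}_-$-component of each $w_i$ also has minimum degree $2$, giving analogous inequalities), this imbalance should contradict the assumption of positive density. The main obstacle is this final step: the dynamical bound propagates fixation times only in one direction, and the transport rule must be chosen so that its covariance under $\mathrm{Aut}(\mathbb{T}_3)$ converts the one-sidedness into a quantitative excess of out-flow over in-flow.
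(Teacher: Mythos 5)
Your dynamical bound $\tau_{v_{i\pm 1}} \leq \tau_{v_i} \vee \tau_{w_i}$ is a nice observation (though the ``independent Poisson process'' justification should be replaced by a stopping-time argument as in the paper's Lemma~\ref{lem: agree_with_neighbor}, since the set $\{t>\tau_i^*:\sigma_{v_{i-1}}(t)=-1\}$ is not independent of $v_i$'s clock). However, the argument has two genuine gaps. First, the reduction to bi-infinite paths is not complete: the claim that ``iterating the branching yields infinitely many ends'' is false as a deterministic statement --- a subtree of $\mathbb{T}_3$ with minimum degree $2$ and one degree-$3$ vertex can have exactly three ends. To conclude that a component has either two or infinitely many ends you need the invariant-percolation dichotomy (the paper cites H\"aggstr\"om \cite[Theorem~1.2]{Haggstrom3}); you cannot avoid it by a pointwise argument.

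Second, and more seriously, the closing mass-transport step is not a sketch with a finishable plan --- it is, as you yourself note, an obstacle you do not overcome. Your inequality chain gives $\max(\tau_{v_0},\tau_{v_n})\leq\max_{0\leq k\leq n}\tau_{w_k}$, but as $n\to\pm\infty$ both sides are typically infinite, so no useful imbalance emerges; the transport rule ``send unit mass to the nearest $\ell$ with $\tau_{w_\ell}\geq\tau_{v_k}$'' has bounded out-flow but there is no reason the in-flow at a $w$-vertex should be infinite with positive probability (which is what the mass-transport principle requires for a contradiction). The ``parallel constraint on the $-$-side'' is also not parallel: the $\mathbb{G}_-$-component of each $w_i$ need not be a bi-infinite path, so the analogous inequalities need not hold. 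The paper takes a fundamentally different and much shorter route: on the event that a component of $\mathbb{G}_+$ is a bi-infinite path $\Gamma$, the median-process update rule forces $U_x(\infty)$ to be constant along $\Gamma$ (if $U_x(\infty)<U_y(\infty)$ for neighbors $x,y\in\Gamma$, then $x$ would eventually sit strictly below two of its three neighbors, contradicting the median update), so $\Gamma$ is contained in a single agreement cluster; this cluster is infinite, contradicting Theorem~\ref{thm: finite_agreement_clusters}. That argument crucially uses the continuous-spin coupling and the already-established finiteness of agreement clusters, neither of which appears in your proposal, and I do not see how to substitute for them within the purely discrete framework you set up.
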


\subsubsection{Results for the median process}

The main results on the discrete spin process come from analyzing a continuous spin version of the model which we call the median process. It will provide a coupling of discrete processes for different values of $p$ on the same probability space with some probability measure $\mathbb{P}$. For this, we let $(U_x)_{x \in \mathcal{V}} \in [0,1]^{\mathcal{V}}$ be a family of i.i.d. uniform $[0,1]$ random variables, one assigned to each vertex of $\mathbb{T}_3$. Consider the following dynamics, starting from the initial configuration $U(0) = (U_x(0))$, where $U_x(0):=U_x$, producing a Markov process $U(\cdot)$ on the space $[0,1]^{\mathcal{V}}$. Each vertex has an exponential clock with rate one and clocks at different sites are independent of each other. If a vertex's clock rings at time $t$, then it assumes the median of its neighbors' values:
\[
U_x(t) = \text{median}\left\{ U_y(t^-) :  y \in \partial x \right\}.
\]
(One can check that this process is well-defined by applying \cite[Theorem~3.9]{liggett} with the choices $W=[0,1]$, $S = \mathbb{T}_3$, $X = [0,1]^S$, and $c_T$ defined as $c_T(\eta,\text{d}\zeta) = 0$ if $\#T \geq 2$ and $c_T(\eta,\text{d}\zeta) = \delta_{\{\text{median}(\eta(y_i), y_i \in \partial x)\}}$ if $T = \{x\}$.) This ``continuous  spin'' model is related to the original discrete spin model by projection, and we show this in Lemma~\ref{lem: projection}. Namely, for a given $p \in [0,1]$, the set of vertices $x$ with $U_x(t) \leq p$ evolves as $t$ grows as the set of vertices with $+1$ spins in the discrete spin model distributed under the measure $\mathbb{P}_p$.

In Lemma~\ref{lem: continuous_limit}, we show that each continuous spin has a limit as $t\to\infty$: for $x \in \mathcal{V}$, we can a.s.~define
\[
U_x(\infty) := \lim_{t \to \infty} U_x(t).
\]
Unlike in the discrete process, the fact that the continuous spins have limits does not immediately imply that they are eventually constant in time. Our next main result states that this is, however, indeed true. To state it, we say that the vertex $x$ has a flip at time $t$ if $U_x(t^-) \neq U_x(t)$. 
\begin{thm}\label{thm: continuous_theorem}
Almost surely, each $x$ flips only finitely often: for any $x \in \mathcal{V}$,
\[
\mathbb{P}\left(U_x(t^-) \neq U_x(t) \text{ for infinitely many }t\right)=0.
\]
\end{thm}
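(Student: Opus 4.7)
My plan is to argue by contradiction in two steps. First, a pigeonhole step converts an infinite sequence of flips at $x$ into the equality $U_x(\infty) = U_y(\infty)$ for some neighbor $y$ of $x$. Second, an energy/NNS-style argument shows that such equalities almost surely do not occur.

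For the first step, suppose $x$ flips at times $t_1 < t_2 < \cdots$. Because every flip at $x$ coincides with a ring of the rate-one Poisson clock at $x$, and such a clock has only finitely many rings in any bounded interval, on the event of infinitely many flips one must have $t_n \uparrow \infty$. At each flip time, $U_x(t_n)$ equals the median of $\{U_y(t_n^-) : y \in \partial x\}$, and the median of three real numbers is always one of them; so for each $n$ there is some $y_n \in \partial x$ with $U_x(t_n) = U_{y_n}(t_n^-)$. Pigeonholing over the three possible choices of neighbor, some fixed $y \in \partial x$ plays this role along an infinite subsequence $t_{n_k} \uparrow \infty$, giving $U_x(t_{n_k}) = U_y(t_{n_k}^-)$ for each $k$. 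Sending $k \to \infty$ and invoking Lemma~\ref{lem: continuous_limit} for both $U_x$ and $U_y$ yields $U_x(\infty) = U_y(\infty)$; so on the event that $x$ flips infinitely often, at least one edge incident to $x$ is ``tied at infinity.''

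For the second step it suffices to show that almost surely $U_x(\infty) \neq U_y(\infty)$ for every edge $\{x, y\}$ of $\mathbb{T}_3$. Here I would use the natural energy $E_x(t)$ of the median process defined in \eqref{eq: natural_energy}, together with the fact flagged in the introduction that $E_x$ cannot increase at a flip of $x$ when $x$ has degree three (the median of three reals is sandwiched between the other two and equal to one of them). Combined with the structural results of Lemmas~\ref{lem: no_degree_one} and \ref{lem: agree_with_neighbor}, and the continuous-spin analogue of the Nanda--Newman--Stein finite-oscillation argument that is the content of Lemma~\ref{l:NNS_finite_osc}, this would rule out any ``tied-at-infinity'' edge and thereby complete the proof.

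The main obstacle is this second step. In the median process, two adjacent sites genuinely coincide in value for a positive interval of time whenever a median update at $x$ selects $y$ as its middle neighbor, so the conclusion $U_x(\infty) \neq U_y(\infty)$ cannot be deduced from pathwise strict inequality; it must come entirely from the dynamics. What makes the argument delicate is that the natural energy $E_x$ is monotone only at flips of $x$ itself, while a priori it can jump at flips of the neighbors of $x$; reconciling these two effects is where the degree-three hypothesis is essential, and where the analogous argument would fail on $\mathbb{T}_d$ for odd $d \geq 5$.
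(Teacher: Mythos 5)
Your Step 1 is correct as far as it goes, but its conclusion is vacuous: by item~3 of Lemma~\ref{lem: continuous_limit}, almost surely \emph{every} vertex $x$ has at least one neighbor $y$ with $U_x(\infty) = U_y(\infty)$, regardless of whether $x$ flips infinitely often. (The limiting configuration is a.s.\ a fixed point of the median rule, and the median of three reals always coincides with one of them.) So the existence of a ``tied-at-infinity'' edge incident to $x$ distinguishes nothing, and your Step~2 --- proving that a.s.\ $U_x(\infty) \neq U_y(\infty)$ for every edge $\{x,y\}$ --- is asking to prove something that is false. No amount of energy bookkeeping will rescue it.

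The correct sharpening of your Step~1 is Proposition~\ref{prop: infinite_equivalence}: $x$ flips infinitely often if and only if the agreement cluster $\mathcal{A}_x$ is \emph{infinite}, not merely nonempty. The implication ``infinitely many flips $\Rightarrow$ infinite cluster'' already requires a mass-transport argument (to prevent an infinite cluster from being reached only after finitely many flips, which would force two distinct vertices to share an initial $U$-value), and the converse is a separate observation. The theorem then reduces to Theorem~\ref{thm: finite_agreement_clusters} (all agreement clusters are a.s.\ finite), and that is where the real work lies: one uses Lemma~\ref{l:NNS_finite_osc} to partition an infinite $\mathcal{A}_o$ by eventual sign into $\mathcal{A}_o^{<}$ and $\mathcal{A}_o^{>}$, shows each vertex of $\mathcal{A}_o^{<}$ has degree $\geq 2$ there (via Lemmas~\ref{lem: no_degree_one} and \ref{lem: agree_with_neighbor}), invokes H\"aggstr\"om's dichotomy to reduce to the two-ended or infinitely-ended cases, and then rules out two ends via a resampling/coupling argument (Section~\ref{sec: two_ends}, Lemma~\ref{lem:single_path_invariance}) and infinitely many ends via invariant percolation (Lemma~\ref{cor: percolation}). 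Your instincts about which lemmas are relevant are right, but the statement you are routing them toward is wrong, and repairing it requires replacing ``no tied edges'' with ``no infinite agreement clusters'' and supplying the percolation machinery that the latter needs.
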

Theorem~\ref{thm: continuous_theorem} is a direct consequence of Proposition~\ref{prop: infinite_equivalence} and Theorem~\ref{thm: finite_agreement_clusters} below. Furthermore, it implies Theorem~\ref{thm: continuity} (see Section~\ref{sec: conditional_proof}).

Our next results are about the structure of agreement and disagreement clusters of the limiting state of the median process. 
\begin{df}\label{def: agreement}
The agreement graph is the graph whose vertex set is $\mathcal{V}$ and whose edge set consists of those edges $\{x,y\} \in \mathcal{E}$ such that $U_x(\infty) = U_y(\infty)$. For $x \in \mathcal{V}$, the agreement cluster of $x$, written $\mathcal{A}_x$, is the component of the agreement graph containing $x$.
\end{df}


One can to argue that on $\mathbb{T}_3$, vertices $w,z$ in distinct components of the agreement graph have $U_w(\infty) \neq U_z(\infty)$. A main result used to derive our discrete theorems is the following, which will be shown in Section~\ref{sec: agreement_finite}.
\begin{thm}\label{thm: finite_agreement_clusters}
Almost surely,  all agreement clusters are finite.
\end{thm}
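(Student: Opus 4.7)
The plan is to argue by contradiction. Assume that with positive probability the agreement cluster $\mathcal{A}_o$ of the root is infinite. Since the joint law of $(U_x(0))_{x \in \mathcal{V}}$ together with the clocks $\omega$ is $\mathrm{Aut}(\mathbb{T}_3)$-invariant, so is the law of $U(\infty)$, and the event that a vertex lies in an infinite agreement cluster has the same positive probability at every vertex. This invariance is what allows me to use the mass transport principle freely.

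Step 1 (no leaves in $\mathcal{A}_o$). I would first apply Lemmas \ref{lem: no_degree_one} and \ref{lem: agree_with_neighbor} — which, as the introduction flags, are $\mathbb{T}_3$-specific inputs coming from the non-increase of the natural energy $E_x$ in \eqref{eq: natural_energy} under a median flip at a degree-$3$ vertex — to conclude that almost surely no vertex $x\in\mathcal{A}_o$ has only one neighbor in $\mathcal{A}_o$. Hence $\mathcal{A}_o$ is an infinite subtree of $\mathbb{T}_3$ of minimum degree $\geq 2$, and a leaf of any finite sub-portion of $\mathcal{A}_o$ must be a vertex of degree $2$ in $\mathcal{A}_o$.

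Step 2 (mass transport in the branching case). Since $\mathbb{T}_3$ has maximum degree $3$, such a subtree either is a doubly-infinite path or contains a vertex of degree $3$ in $\mathcal{A}_o$. In the branching case, the min-degree-$2$ constraint forces all three of the branches of $\mathcal{A}_o$ emanating from any degree-$3$ vertex to be infinite (otherwise a finite branch would contain a degree-$1$ vertex of $\mathcal{A}_o$). I would then run the mass transport $m(x,y) = \mathbf{1}\{x,y\in\mathcal{A}_o,\ y\text{ is the branching vertex of }\mathcal{A}_o\text{ closest to }x\}$. The root sends at most one unit of mass, but any branching vertex receives mass from at least two full infinite branches, hence infinite mass on the event that $\mathcal{A}_o$ is infinite and branching. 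The diagonal mass transport principle $\E\sum_y m(o,y) = \E\sum_y m(y,o)$ then yields the contradiction $\leq 1 = \infty$, excluding the branching case.

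Step 3 (doubly-infinite path case). In the remaining case, every $x\in\mathcal{A}_o$ has exactly two neighbors in $\mathcal{A}_o$ and one third neighbor $z_x$ with $U_{z_x}(\infty)\neq u$, where $u:=U_o(\infty)$. I would combine Lemma \ref{l:NNS_finite_osc} (after a finite random time, the outside neighbors $z_x$ no longer flip) with an ergodic/mass-transport argument applied to the labelled doubly-infinite path $(x,\mathrm{sign}(U_{z_x}(\infty)-u))$, using the continuous, i.i.d., hence non-atomic, initial distribution, to conclude that such a configuration has probability zero. The main obstacle is exactly this step: locally a doubly-infinite agreement path is perfectly consistent with the median rule, since $\mathrm{median}\{u,u,v\}=u$ for every $v$, so the exclusion must be global and must genuinely use invariance, the continuous initial law, and the degree-$3$ energy-monotonicity simultaneously — the three ingredients the introduction identifies as the $\mathbb{T}_3$-specific features.
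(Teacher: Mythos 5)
Your Step 2 is incorrect, and it is the central gap in the proposal. The mass transport $m(x,y) = \mathbf{1}\{y\text{ is the branching vertex of }\mathcal{A}_x\text{ closest to }x\}$ does \emph{not} force a branching vertex to receive infinite mass. Once the cluster has a single degree-$3$ vertex, H\"aggstr\"om's theorem (\cite[Theorem~1.2]{Haggstrom3}) forces it to have \emph{infinitely many} ends, hence infinitely many branching vertices appearing at positive density inside the cluster. In that regime each vertex's ``closest branching vertex'' is typically at bounded distance, and each branching vertex receives mass only from the finitely many vertices for which it is the closest one; no divergence occurs. (Your intuition is right only if there were finitely many branching vertices, which H\"aggstr\"om's theorem rules out a.s.\ anyway.) Ruling out the infinite-ends case is in fact hard, and the paper does it via a quantitative approximation lemma about invariant percolations (Lemma~\ref{cor: percolation}/Proposition~\ref{prop: percolation}): one builds sub-percolations $\eta_n$ of the agreement percolation by requiring $U_x(t_n) < U_x(\infty) - \delta_n$, shows their densities converge to that of $\eta$, and then extracts a doubly-infinite path along which spins are pinned strictly below their limit, contradicting the median update rule.

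Step 3 you explicitly leave open, and here too you have missed the paper's organizing idea: one does not work with $\mathcal{A}_o$ directly but with the finer decomposition $\mathcal{A}_o = \mathcal{A}_o^{<} \cup \mathcal{A}_o^{>}$, where a vertex $v$ is type $<$ (resp.\ $>$) if $U_v(t) < U_v(\infty)$ (resp.\ $> U_v(\infty)$) for all large $t$; this is exactly what Lemma~\ref{l:NNS_finite_osc} supplies. Lemma~\ref{lem: agree_with_neighbor} then shows each vertex of $\mathcal{A}_o^{<}$ has $\geq 2$ neighbors in $\mathcal{A}_o^{<}$, and it is $\mathcal{A}_o^{<}$ — not $\mathcal{A}_o$ — to which H\"aggstr\"om's dichotomy is applied. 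This ``$<$/$>$'' pinning is the source of stability that makes both the two-ends and infinite-ends contradictions run. The two-ends case is then handled in the paper by a delicate resampling argument (Claim~2 and Lemma~\ref{lem:single_path_invariance}): conditionally on $\mathcal{A}_o^<$ being a single path contained in one subtree $\mathbb{T}_{o\to x_0}$, resample the randomness in the complementary subtree $\mathbb{T}_{x_0\to o}$ so as to force $U_{x_0}(t)\leq Z$ for all large $t$ without disturbing the dynamics on the original side, and then derive a mass-transport contradiction from the resulting configuration. This is a genuinely different and more elaborate mechanism than the ``ergodic/mass-transport on the labelled path'' you gesture at, and I see no way to make your sketch work without it.

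So the overall architecture (rule out leaves; then split into a two-ends case and a branching case) is sound and does track the paper's skeleton, but both of your case analyses fail as written: Step~2 for a concrete mathematical reason, Step~3 because you have not identified the $<$/$>$ decomposition or the resampling trick that actually closes the argument.
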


Correspondingly, we may define disagreement clusters. The disagreement graph is the subgraph of $\mathbb{T}_3$ whose vertex set is $\mathcal{V}$ and whose edge set is equal to the set of disagreement edges --- those edges $\{x,y\}\in \mathcal{E}$ such that $U_x(\infty) \neq U_y(\infty)$. The main result on the disagreement graph is that a.s., all of its components are finite.
\begin{thm}\label{thm: disagreement_clusters}
Almost surely, all components of the disagreement graph are finite. Furthermore, for any component $\mathcal{C}$, there are two vertices $w,z \in \mathcal{C}$ such that $\mathcal{C}$ is equal to the path in $\mathbb{T}_3$ between $w$ and $z$.
\end{thm}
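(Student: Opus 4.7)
The plan is to prove Theorem~\ref{thm: disagreement_clusters} in two stages: first, show that every vertex has at most two disagreement neighbors (so, since $\mathbb{T}_3$ is a tree, each component of the disagreement graph is a simple path); second, rule out infinite components using Theorem~\ref{thm: finite_agreement_clusters}.

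For the degree bound I would exploit the stable median identity at time infinity. By Theorem~\ref{thm: continuous_theorem}, for each vertex $x$ the last-flip time is almost surely finite; since $x$'s rate-one Poisson clock still rings infinitely often beyond that time with each such ring producing no flip, one obtains
\[
U_x(\infty) = \text{median}\{U_y(\infty) : y \in \partial x\}.
\]
Because $|\partial x| = 3$, the median on the right equals the value of at least one neighbor $y$, so $U_y(\infty) = U_x(\infty)$; hence the disagreement degree of $x$ is at most two. A connected subgraph of the tree $\mathbb{T}_3$ with maximum degree two is acyclic and so is a simple path, possibly one- or two-sided infinite.

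Along any disagreement path, the median identity applied at an interior vertex $x_i$ (with path-neighbors $x_{i-1},x_{i+1}$ and third neighbor $z_i$, which necessarily agrees with $x_i$) forces $U_{x_{i-1}}(\infty)$ and $U_{x_{i+1}}(\infty)$ to lie strictly on opposite sides of $U_{x_i}(\infty)$. Hence the sequence $u_i := U_{x_i}(\infty)$ is strictly monotone along every disagreement path. To rule out infinite paths I would argue by contradiction: if an infinite disagreement component occurs with positive probability, then by $\mathrm{Aut}(\mathbb{T}_3)$-invariance of the median process the root lies on one with positive probability, and after reversing orientation if necessary we may extract a ray $o = x_0, x_1, \ldots$ with $u_i$ strictly increasing. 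Each $z_i$ then lies in the agreement cluster $\mathcal{A}_{x_i}$, which is almost surely finite by Theorem~\ref{thm: finite_agreement_clusters} and pairwise distinct across $i$ because the $u_i$ are all different. I would then orient each disagreement edge from its lower- to its higher-valued endpoint and set up a mass-transport principle on $\mathbb{T}_3$ whose balance equation, combined with the finite cluster sizes (which control the possible in/out flux at each cluster), forces the probability that a vertex lies on an infinite disagreement component to vanish.

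The main obstacle is this last, finiteness step. Maximum degree two together with strict monotonicity of $u$ along paths does not by itself preclude an infinite monotone sequence in $[0,1]$; the global input from Theorem~\ref{thm: finite_agreement_clusters} must be combined with invariance of the median process to produce the contradiction. Once infiniteness is excluded, the shape statement is immediate: a finite connected subgraph of $\mathbb{T}_3$ with maximum degree two is a simple path between its two leaves $w$ and $z$, and because $\mathbb{T}_3$ is a tree this path is exactly the geodesic from $w$ to $z$.
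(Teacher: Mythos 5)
Your first two steps are sound and essentially match the paper: item~3 of Lemma~\ref{lem: continuous_limit} (which you re-derive) gives that every vertex has at least one agreeing neighbor at time infinity, so every vertex has disagreement-degree at most two, and hence every disagreement component is a simple path; the stable median identity at an interior vertex of such a path then forces strict monotonicity of $u_i = U_{x_i}(\infty)$ along the path, as in the paper's treatment of its ``type (c)'' case.

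The gap is in the finiteness step, and you flag it yourself as the ``main obstacle.'' You propose to orient each disagreement edge from lower to higher value, extract from the infinite component a monotone ray starting at the root, observe that the third neighbors $z_i$ lie in the (a.s.\ finite, pairwise distinct) agreement clusters $\mathcal{A}_{x_i}$, and then invoke an unspecified mass-transport ``balance equation.'' As written, there is no mass transport rule: a unit flow along oriented disagreement edges is trivially balanced edge by edge and yields no contradiction, and nothing you have said singles out a \emph{unique} target vertex per component that would receive infinite mass. Finiteness of agreement clusters does not obviously help here either, and the paper in fact never uses Theorem~\ref{thm: finite_agreement_clusters} in this proof. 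What the paper does is split the infinite case into (b) a one-sided infinite path and (c) a doubly infinite path, and in each case exhibit a canonical vertex to which the whole component can send mass: for (b) it is the unique degree-one endpoint; for (c) it uses the strict monotonicity you established to pick thresholds $r_1 < r_2$ so that, with positive probability, $S(\mathcal{C}(o)) > r_2$ and $I(\mathcal{C}(o)) < r_1$, and then the unique vertex at which the path first crosses the level $(r_1 + r_2)/2$ is the target. In both cases the target receives infinite mass while each vertex sends out at most one unit, contradicting the mass transport principle. Your ray-extraction step actually discards the natural degree-one target in case (b), and supplies no replacement in case (c), so as it stands the argument does not close. If you replace your final paragraph with the threshold-crossing mass transport (and treat the degree-one case separately or note that a one-sided infinite path is covered by the same threshold device once you orient toward the endpoint), you recover the paper's proof.
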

\noindent
Theorem~\ref{thm: disagreement_clusters} is proved in Section~\ref{sec: disagreement_clusters}.

The last result concerns the trace of a vertex. In Definition~\ref{def: trace}, we define the trace of $x\in \mathcal{V}$ as $\text{Tr}(x)$, the set of $y \in \mathcal{V}$ such that $U_y(t) = U_x(0)$ for some $t$. We restate the following theorem as Proposition~\ref{prop: finite_trace}, and prove it in Section~\ref{sec: correlation_decay} as a tool to derive Theorem~\ref{thm: correlation_decay}.
\begin{thm}\label{thm: finite_trace}
Almost surely,  for each $x \in \mathcal{V}$, one has $\#\text{Tr}(x)<\infty$.
\end{thm}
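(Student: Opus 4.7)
The plan is to analyse the \emph{carrier set} $C_v(t) := \{y : U_y(t) = v\}$ of the value $v := U_x(0)$, with $\operatorname{Tr}(x) = \bigcup_{t \ge 0} C_v(t)$. Since the initial values $(U_y)_{y \in \mathcal{V}}$ are a.s.\ distinct and every median update replaces a spin with one of the three values held by its neighbours, $C_v(t)$ is well-defined and its cardinality changes by at most one per update. The first structural step is to prove that $C_v(t)$ is always a connected subgraph of $\mathbb{T}_3$. The key is the ``two-out-of-three'' property of the median: whenever a vertex has at least two $C_v$-neighbours, the median of its three neighbours equals $v$, so the vertex is forced into $C_v$ after any update. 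Consequently a vertex can only leave $C_v$ while having at most one $C_v$-neighbour (a removal which cannot disconnect $C_v$), and a vertex can only enter $C_v$ by updating to the median $v$, which requires adjacency to $C_v$ and therefore preserves connectedness.

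Next I would identify $C_v(\infty) := \{y : U_y(\infty) = v\}$ using Theorem~\ref{thm: continuous_theorem} to ensure the limits exist. Any two vertices in $C_v(\infty)$ share a limiting value, so they lie in a common agreement cluster, and if one vertex of an agreement cluster lies in $C_v(\infty)$, the whole cluster does; hence $C_v(\infty)$ is either empty or equal to a single agreement cluster, which is finite by Theorem~\ref{thm: finite_agreement_clusters}. This controls the ``stable part'' of $\operatorname{Tr}(x)$ and leaves the transient carriers $\operatorname{Tr}(x) \setminus C_v(\infty)$ as the main target.

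To control these, my tool of choice is a mass-transport / symmetry identity. Define $O(y) := \{z \in \mathcal{V} : U_z(0) = U_y(t) \text{ for some } t \ge 0\}$, the set of initial vertices whose values ever appear at $y$. The duality
$$
y \in \operatorname{Tr}(x) \;\Longleftrightarrow\; x \in O(y)
$$
is immediate from the definitions. Since $\mathbb{T}_3$ admits, for any pair of vertices, an automorphism swapping them, and since both the i.i.d.\ initial law and the median dynamics are automorphism-invariant, one obtains $\mathbb{P}(y \in \operatorname{Tr}(x)) = \mathbb{P}(x \in O(y)) = \mathbb{P}(y \in O(x))$ for every $y$, whence $\mathbb{E}|\operatorname{Tr}(x)| = \mathbb{E}|O(x)| \le 1 + \mathbb{E}[F_x]$, where $F_x$ denotes the flip count at $x$ and the final bound uses that each flip of $U_x$ contributes at most one new value to $O(x)$.

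The main obstacle is the last inequality: Theorem~\ref{thm: continuous_theorem} only delivers $F_x < \infty$ almost surely, not an $L^1$ bound. I would attack this either (i) by coupling $F_x$ with the flip counts $F_x^p$ of finitely many discrete projections $\sigma^p$ whose thresholds $p$ straddle $v$ and invoking quantitative flip-count input from the proof of Theorem~\ref{thm: continuous_theorem}, or (ii) by bypassing expectations with a direct almost-sure stabilisation argument: combine the connectedness above with the finite-path structure from Theorem~\ref{thm: disagreement_clusters} to show that after some finite random time no vertex enters or leaves $C_v$, so that $\operatorname{Tr}(x) = \bigcup_{t \le T} C_v(t)$ for some a.s.\ finite $T$. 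Either route is the crux of the proof; the preceding paragraphs are structural bookkeeping that reduce the theorem to this single quantitative statement.
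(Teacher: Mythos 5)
Your structural observations — connectedness of the carrier set $C_v(t)$, its cardinality changing by at most one per update, $C_v(\infty)$ being a single (finite) agreement cluster, the duality $y\in\operatorname{Tr}(x)\iff x\in O(y)$ and the resulting identity $\mathbb{E}|\operatorname{Tr}(x)|=\mathbb{E}|O(x)|$ — are all correct. The carrier-set connectedness in particular is the same elementary fact the paper uses (its phrasing: if $U_w(t)=U_z(t)$ then every vertex on the $\mathbb{T}_3$-geodesic from $w$ to $z$ carries that value at time $t$), and your duality argument is exactly the mass-transport principle in a slightly disguised form. So the reduction to bounding $\mathbb{E}|O(x)|\le 1+\mathbb{E}[F_x]$ is valid.

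The genuine gap is the one you flag: $\mathbb{E}[F_x]<\infty$ is not available, and neither of your two proposed escape routes is workable as stated. For route (i): even for the discrete model under $\mathbb{P}_p$, the Nanda--Newman--Stein energy argument controls only the number of \emph{strictly energy-decreasing} flips (the quantity $m_\infty(x,x)$ in the proof of Lemma~\ref{lem: no_degree_one}, which has $|\mathbb{E}\,m_\infty(x,x)|\le 3$); there is no $L^1$ bound on the total flip count, and none is used anywhere in the paper. For route (ii): almost-sure stabilisation of each individual spin does not give a uniform time after which $C_v$ stops acquiring new vertices. Connectedness only forces $C_v(t)$ to stay connected at each time slice; it does not prevent the carrier set from drifting further and further from $x$ at later and later times, adding a new vertex to the trace each time (this is precisely the bad scenario to be ruled out), and Theorem~\ref{thm: disagreement_clusters} about limiting disagreement clusters gives no handle on the finite-$t$ carrier sets.

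The paper resolves this with a different mass transport, one designed to avoid any expectation of a flip count. It sets $m_t(x,y)=\mathbf{1}\{\#\operatorname{Tr}(x)=\infty\text{ and }U_y(t)=U_x(0)\}$. Because the initial values are a.s.\ distinct, for each $y$ there is at most one $x$ with $U_y(t)=U_x(0)$, so the total mass entering a vertex is at most $1$; while on $\{\#\operatorname{Tr}(o)=\infty\}$ the mass leaving $o$ is at least $1$. This yields the \emph{probability} inequality
\[
\mathbb{P}\bigl(\#\operatorname{Tr}(o)=\infty\bigr)\;\le\;\mathbb{P}\bigl(\exists\,x:\#\operatorname{Tr}(x)=\infty\text{ and }U_o(t)=U_x(0)\bigr),
\]
with no expectation of a count on either side. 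Letting $t\to\infty$ and using that $U_o(t)$ stabilises (which you already have), the right side is bounded by $\mathbb{P}(\exists\,x:\#\operatorname{Tr}(x)=\infty\text{ and }U_o(\infty)=U_x(0))$. On that event, write $v=U_x(0)=U_o(\infty)$: infiniteness of $\operatorname{Tr}(x)$ supplies an infinite sequence $x_n\to\infty$ and times $t_n\to\infty$ with $U_{x_n}(t_n)=v$, while $U_o(t_n)=v$ for $n$ large; connectedness of the carrier set at time $t_n$ forces every vertex on the geodesic from $o$ to $x_n$ to carry $v$ at time $t_n$, and letting $n\to\infty$ shows the agreement cluster of $o$ is infinite, contradicting Theorem~\ref{thm: finite_agreement_clusters}. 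In short, the paper never needs an $L^1$ bound: by restricting the transport to the event $\{\#\operatorname{Tr}(x)=\infty\}$ and exploiting a.s.\ uniqueness of initial values, it converts the statement into a probability comparison that closes directly against finiteness of agreement clusters.
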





\subsection{Relation to other work}

The majority vote model we study here is typically considered on $\mathbb{Z}^d$ under the name ``zero-temperature Ising Glauber dynamics,'' since the update rule is the zero-temperature limit of the one for Ising Glauber dynamics. Because every vertex of $\mathbb{Z}^d$ has even degree, there are ties in the majority rule, and one needs a tie-breaking procedure. Usually a fair coin is used (update to $+1$ with probability $1/2$ if a vertex has an equal number of $+1$ and $-1$ neighbors). In this context, the main questions involve fixation (whether each spin flips finitely many times) and the value of the consensus threshold, defined as
\[
p_c = \sup\{ p \in [0,1] : \mathbb{P}_p(\sigma_o(\infty) \text{ exists and equals } -1) = 1\}.
\]

On $\mathbb{Z}$, the majority vote process is equivalent to the nearest-neighbor voter model (spins update to $+1$ with probability proportional to the number of neighbors with spin $+1$). In \cite{arratia}, it is shown that for this voter model, and any $p \in (0,1)$, each spin flips infinitely often, and therefore there is no fixation. This implies $p_c=0$. For $d \geq 2$, Fontes-Schonman-Sidoravicius \cite{FSS} proved that $p_c \in (0,1/2]$ by using a multiscale analysis. The method of \cite{FSS} does not give a quantitative lower bound on $p_c$, whose value is predicted by a folklore conjecture to be $1/2$ for all $d \neq 1$. The closest result to this conjecture is due to Morris \cite{Morris}, who showed that, as a function of $d$, $p_c$ approaches $1/2$ as $d \to \infty$. In another direction, very little is known about fixation for general $d$. Nanda-Newman-Stein \cite{NNS} used an invariance argument to prove that for $p=1/2$ and $d=2$, every spin flips infinitely often, and this is believed to be true for $p=1/2$ and $d$ sufficiently small. (See \cite{SKR, S} for numerical evidence that fixation holds for $d$ large.) The method of \cite{NNS} shows further that for odd-degree graphs with sufficiently many symmetries, one has fixation, and for even-degree graphs, each spin has finitely many ``energy lowering'' flips. In \cite{CNS}, Camia-Newman-Sidoravicius studied the model on the two-dimensional hexagonal lattice (which has degree three), and showed that the process has remarkably simple properties: each spin flips at most 8 times, and fixation occurs due to local structures which are stable under the dynamics. This latter fact can be used to show that $\theta$ is continuous. In the case of $\mathbb{T}_3$ that we study, fixation is governed instead by doubly-infinite paths and does not obviously imply continuity of $\theta$.

Even for trees, the results are scarce. For $\mathbb{T}_d$ with $d$ odd, the Nanda-Newman-Stein result implies that fixation holds a.s.~for each vertex. For even-degree trees, the only result is for $p=1/2$, where it is shown that a.s., there exist vertices that do not fixate (see for example \cite[Thm.~39]{Tessler}). The question of the value of the consensus threshold is also wide open in general. There is some evidence (based on work on the synchronous analogue of the majority vote model by Kanoria-Montanari \cite{KM11}) that $p_c < 1/2$ for odd $d \geq 3$ and $p_c=1/2$ for even $d\geq 4$. The only verified case is $d=3$ by Howard, who showed that $p_c<1/2$ using a branching process argument and short-time analysis. It follows from the work of Caputo-Martinelli \cite{caputo} that $p_c(\mathbb{T}_d) \to 1/2$ as $d \to \infty$.

%
%
%

\subsection{Mass transport, invariant percolation, and other tools}\label{sec: tools}

In this section, we present some tools that will be used in the remainder of the paper. The first tool we need is the mass transport principle. This is an important part of nearly all of our proofs, and the reader can see introductions in \cite{Haggstrom2} and \cite[Section~3]{Haggstrom}. Let $\mathbb{P}$ be a probability measure on $[0,1]^{V(\mathbb{T}_d)}$, where $V(\mathbb{T}_d)$ is the vertex set of the infinite $d$-regular tree, which is invariant under each tree-automorphism $\Theta$. Next, suppose that a random variable $m(x,y) \geq 0$ (mass-transport rule) for each pair of vertices $x,y$ in $\mathbb{T}_d$ has been defined such that for all tree-automorphisms $\Theta$, one has
\begin{equation}\label{eq: mass_invariance}
\mathbb{E}m(x,y) = \mathbb{E}m(\Theta(x),\Theta(y)).
\end{equation}
Then the mass transport principle states that the expected mass entering a vertex equals the expected mass exiting a vertex; that is,
\[
\mathbb{E}\sum_{y\in \mathcal{V}} m(x,y) = \mathbb{E}\sum_{x\in \mathcal{V}} m(x,y).
\]
The mass transport principle is valid in our context because the majority dynamics and the median process are obviously invariant under tree-automorphisms.

Often we will apply the mass transport principle using invariant orderings to break ties in the mass transport rule. For example, suppose we run the median process until time $t=1$ and define a mass transport as
\[
m(x,y) = \begin{cases}
1 & \quad \text{if } y \text{ is the closest vertex to } x \text{ with } U_y(1) \leq 1/2 \\
0 & \quad \text{otherwise}
\end{cases}.
\]
It may be that $U_x(1) > 1/2$, while all neighbors of $x$ have spin values $\leq 1/2$. In this case, to resolve the ambiguity in ``closest,'' we apply what we will call an ``invariant tie-breaking rule.'' Let $(\xi_v)$ be a family of uniform $[0,1]$ i.i.d.~random variables (independent of all the other variables), one assigned to each vertex of $\mathbb{T}_3$. To break ties, we can write $Y_x$ for the subset of vertices of $\{y \in \mathcal{V} : U_y(1) \leq 1/2\}$ with minimal distance to $x$ and define
\[
m(x,y) = \begin{cases}
1 & \quad \text{if } \xi_y \text{ is maximal among } \{\xi_z : z \in Y_x\} \\
0 & \quad \text{otherwise}
\end{cases}.
\]
This tie-breaking rule preserves invariance of $m(x,y)$ under automorphisms in \eqref{eq: mass_invariance}.

In our proofs we will need results about invariant percolation on $\mathbb{T}_3$. These will be proved in Section~\ref{sec: percolation}, but here we will state the most relevant one for future use. Consider the measure space $\{0,1\}^{\mathcal{E}}$ with the product sigma-algebra (generated by the cylinders). An invariant (bond) percolation process is a random element $\eta$ of this space (a measurable map from a probability space to $\{0,1\}^{\mathcal{E}}$) whose distribution is invariant under all graph automorphisms of $\mathbb{T}_3$. For a given element $\tau \in \{0,1\}^{\mathcal{E}}$, we say that an edge $e$ is open in $\tau$ if $\tau(e) = 1$ and closed otherwise. We say that a path in $\mathbb{T}_3$ is open in $\tau$ if all of its edges are open. For a given $\tau$, the open cluster of a vertex $v \in \mathcal{V}$, written $C_\tau(v)$, is the subgraph of $\mathbb{T}_3$ whose vertex set $V(C_\tau(v))$ is the set of $w \in \mathcal{V}$ such that there is an open path starting from $v$ and ending at $w$, and whose edge set $E(C_\tau(v))$ is the set of open edges whose endpoints are in the vertex set of $C_\tau(v)$. Note that $v \in V(C_\tau(v))$, but the edge set $E(C_\tau(v))$ can be empty. If $\tau_1,\tau_2$ are elements of $\{0,1\}^{\mathcal{E}}$, we write $\tau_1 \leq \tau_2$ if for all $e \in \mathcal{E}$, one has $\tau_1(e) \leq \tau_2(e)$.

For a given invariant bond percolation process $\eta$ and a vertex $v \in \mathcal{V}$, let $A_\eta(v)$ be the event that $C_\eta(v)$ has at least three ends. Recall this means that there is a finite subgraph of $C_\eta(v)$ such that the graph obtained from $C_\eta(v)$ by removing this subgraph has at least three infinite components. Also, let $D_\eta(v)$ be the event that $v$ is in a self-avoiding doubly-infinite $\eta$-open path. 

\begin{lem}\label{cor: percolation}
Let $\eta$ be an invariant bond percolation process on $\mathbb{T}_3$ and let $(\eta_n)$ be a sequence of invariant bond percolation processes on $\mathbb{T}_3$ such that 
\begin{enumerate}
\item $\eta_n \leq \eta_{n+1} \leq \eta$ a.s. for all $n$, and
\item for any $e \in \mathcal{E}$, $\lim_{n\to\infty} \mathbb{P}(\eta_n(e) = 1) = \mathbb{P}(\eta(e)=1)$.
\end{enumerate}
Then if $\mathbb{P}(A_\eta(o),D_\eta(o))>0$, one has
\begin{equation}\label{eq: to_prove_doubly_infinite}
\lim_{n\to\infty} \mathbb{P}(D_{\eta_n}(o) \mid A_\eta(o), D_\eta(o)) = 1.
\end{equation}
\end{lem}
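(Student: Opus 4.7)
The plan is to exploit the monotonicity of $(\eta_n)$ to reduce the claim to an almost-sure statement on $A_\eta(o) \cap D_\eta(o)$, then to invoke the ``$1$-$2$-$\infty$'' ends dichotomy for invariant percolation on $\mathbb{T}_3$ to promote ``at least three ends'' to ``infinitely many ends,'' thereby producing infinitely many doubly-infinite $\eta$-open paths through $o$, and finally to run a mass-transport argument ruling out the bad event where none of these paths survives in any $\eta_n$. Concretely, since $\eta_n \leq \eta_{n+1}$ the events $D_{\eta_n}(o)$ are monotone non-decreasing in $n$, so by monotone convergence $\lim_n \mathbb{P}(D_{\eta_n}(o) \mid A_\eta(o), D_\eta(o)) = \mathbb{P}(\bigcup_n D_{\eta_n}(o) \mid A_\eta(o), D_\eta(o))$, and it suffices to show this limit equals $1$. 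Define $s(e) := \min\{n : \eta_n(e) = \eta(e)\}$; combining assumption (2) with the monotonicity of $(\eta_n)$ yields $s(e) < \infty$ a.s.~for each $e$. An $\eta$-open path $\pi$ is $\eta_n$-open precisely when $\sup_{e \in \pi} s(e) \leq n$, so the reduced goal is: a.s.~on $A_\eta(o) \cap D_\eta(o)$, there exists a doubly-infinite $\eta$-open path $\pi$ through $o$ with $\sup_{e \in \pi} s(e) < \infty$.

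\textbf{From three ends to infinitely many.} By the dichotomy for invariant bond percolation on $\mathbb{T}_3$ (a version of which will be established in Section~\ref{sec: percolation}), every infinite $\eta$-cluster has $1$, $2$, or $\infty$ ends almost surely. Thus on $A_\eta(o)$ the cluster $C_\eta(o)$ has $\infty$ ends a.s., and a short tree-surgery argument shows that on $A_\eta(o) \cap D_\eta(o)$ there are then a.s.~infinitely many distinct doubly-infinite $\eta$-open paths through $o$.

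\textbf{Mass transport contradiction.} Suppose for contradiction that $E := A_\eta(o) \cap D_\eta(o) \cap \bigcap_n D_{\eta_n}(o)^c$ has positive probability. On $E$, every doubly-infinite $\eta$-open path through $o$ intersects the invariant random edge set $B_N := \{e : s(e) > N\}$, whose marginal satisfies $\mathbb{P}(e \in B_N) = \mathbb{P}(\eta(e)=1) - \mathbb{P}(\eta_N(e)=1) \to 0$ as $N \to \infty$. Using the auxiliary i.i.d.~uniforms $(\xi_v)$ of Section~\ref{sec: tools}, invariantly single out for each vertex $v$ carrying a doubly-infinite $\eta$-open path a canonical such path $\pi^*(v)$, and send one unit of mass from $v$ to the endpoint of the first $B_N$-edge along $\pi^*(v)$ (in a prescribed direction) nearest to $v$. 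On $E$ the mass leaving $o$ is at least $1$, while an invariance/counting argument controls the expected mass entering any vertex by a constant multiple of $\mathbb{P}(e \in B_N)$, which tends to $0$. Letting $N$ grow yields a contradiction with the mass transport principle, so $\mathbb{P}(E) = 0$.

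The main obstacle is the careful design of this mass transport: one must use the labels $(\xi_v)$ to invariantly pick one path from (possibly infinitely many), verify that the outgoing mass at each vertex is finite despite the $\infty$-ends structure, and show that the incoming mass at a vertex is controlled by a local $B_N$-count rather than by the (possibly infinite) number of paths that pass through it.
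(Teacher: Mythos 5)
Your approach is genuinely different from the paper's: the paper first establishes Proposition~\ref{prop: percolation} (that, conditional on $A_\eta(o)$, the open cluster $C_{\eta_n}(o)$ is eventually infinite), via a BLPS-style mass transport on the ``triple-point graph'' of $\eta$ using exponentially decaying backward flows, and only then derives Lemma~\ref{cor: percolation} by a second mass-transport argument (producing infinitely many vertices with infinite $\eta_n$-clusters in each infinite subtree off $o$) and a gluing step. You instead try to show directly, with a single mass transport, that some doubly-infinite $\eta$-open path through $o$ is eventually $\eta_n$-open.

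There is a genuine gap in your mass-transport step, and it is not a detail you can fill in later---it is structural. You send one unit of mass from $v$ to the nearest $B_N$-endpoint along $\pi^*(v)$, and assert that the expected mass \emph{entering} any vertex is $O(\mathbb{P}(e\in B_N))\to 0$, which would contradict the lower bound $\mathbb{E}[\text{mass out}]\geq\mathbb{P}(E)>0$. But the mass-transport principle forces $\mathbb{E}[\text{mass in}]=\mathbb{E}[\text{mass out}]\geq\mathbb{P}(E)$ for every $N$, so the expected incoming mass is bounded away from zero; the ``invariance/counting argument'' you allude to cannot exist. The reason the naive count misleads you is that although the \emph{probability} that a given vertex receives any mass is indeed $O(\mathbb{P}(e\in B_N))$, the \emph{amount} of mass it receives when it does---namely the number of vertices on a $B_N$-free segment of the canonical path leading to it---is of order $1/\mathbb{P}(e\in B_N)$ (the typical spacing between $B_N$-edges), so the product does not vanish. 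In fact your claimed bound is logically equivalent to the conclusion you are trying to prove, so the argument is circular: you have transferred all the difficulty into an unsubstantiated estimate. What the paper's design gets right is that the outgoing mass is $\mathbf{1}_{\{\text{finite }n\text{-cluster}\}}$ rather than $\mathbf{1}_{\{\text{path meets }B_N\}}$---i.e., the indicator of the event whose probability one actually wants to drive to zero---and the exponentially decaying splitting of the flow at triple points ensures the incoming-mass estimate closes. Even after Proposition~\ref{prop: percolation}, passing from ``infinite $\eta_n$-cluster'' to ``doubly-infinite $\eta_n$-open path'' requires the extra gluing and mass-transport arguments that form the bulk of the paper's proof of Lemma~\ref{cor: percolation}; your sketch skips precisely these.
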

The proof will be given in Section~\ref{sec: percolation}, and will follow from a more general percolation result, Proposition~\ref{prop: percolation}.

The next tool relates to ``finite speed of propagation,'' and can be used to ensure approximate independence between well-separated regions for finite times. In both the discrete and continuous models, for the spins in a set of vertices $V$ to influence the spins in another set of vertices $W$ before time $T$, there must exist a chronological path starting from $V$ and ending in $W$. This is a path with (possibly repeating) vertices $x_0, x_1, \ldots, x_k$ satisfying $x_0 \in V$ and $x_k \in W$, and such that the Poisson clocks at the $x_i$'s ring in succession during the interval $[0,T]$. That is, the clock at $x_0$ rings at a time $t_0 \in [0,T]$, the clock at $x_1$ rings at a time $t_1 > t_0$ in $[0,T]$, and so on. The following lemma bounds the probability that there is a long chronological path starting at $o$ for the interval $[0,T]$.
\begin{lem}\label{lem: chronological}
Given $T\geq 0$,
\begin{align*}
\mathbb{P}(\exists \text{ chronological path for } [0,T] \text{ with } \geq k \text{ many vertices starting from or }&\text{ending at }o) \\
&\leq \frac{5e^{4T}}{4} \left( \frac{4}{5}\right)^k.
\end{align*}
\end{lem}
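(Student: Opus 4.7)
My plan is to apply a first-moment (Markov) bound to a suitably weighted count of chronological paths starting at $o$, with the weight chosen to extract the geometric factor $(4/5)^k$. I would first use time-reversal symmetry: reversing time on $[0,T]$ preserves the joint law of the Poisson clocks, so the probability that a chronological path of length $\geq k$ ends at $o$ equals the probability that one starts at $o$, and a union bound then costs at most a factor of $2$. It therefore suffices to bound the starting-at-$o$ probability by half of the target.

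Next, for a parameter $r > 1$ to be tuned, I would introduce the weighted count $Z_T = \sum_\pi r^{|\pi|}$ over all chronological paths $\pi \subset [0,T]$ starting at $o$, with $|\pi|$ the number of vertices. Since $\{\exists \pi,\, |\pi| \geq k\} \subseteq \{Z_T \geq r^k\}$, Markov's inequality gives $\mathbb{P}(\exists \pi,\, |\pi| \geq k) \leq r^{-k}\, \mathbb{E}[Z_T]$. To evaluate $\mathbb{E}[Z_T]$, I would decompose by length: the number of vertex-sequences $(x_0 = o, x_1, \ldots, x_{m-1})$ with $x_i \in \partial x_{i-1}$ is exactly $3^{m-1}$, and for each such sequence the expected number of increasing time-tuples $t_0 < \cdots < t_{m-1}$ in $[0,T]$ with $t_i$ a ring of $x_i$'s clock equals $T^m/m!$. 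The key point is that this count persists even when the $x_i$ repeat, since the factorial moment measure of a unit Poisson process on $[0,T]$ is just Lebesgue measure, so integrating the indicator of the simplex $\{0 \leq t_0 < \cdots < t_{m-1} \leq T\}$ yields $T^m/m!$ in all cases. Summing gives
\[
\mathbb{E}[Z_T] \;=\; \sum_{m \geq 1} 3^{m-1} r^m \frac{T^m}{m!} \;=\; \frac{1}{3}\bigl(e^{3rT} - 1\bigr) \;\leq\; \frac{e^{3rT}}{3}.
\]

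Choosing $r = 5/4$ so that $3r = 15/4 \leq 4$ and $r^{-k} = (4/5)^k$, I obtain $\mathbb{P}(\exists \pi \text{ starting at } o,\, |\pi| \geq k) \leq \tfrac{1}{3} e^{4T}(4/5)^k$. Doubling for the ``ending at $o$'' case and using $\tfrac{2}{3} \leq \tfrac{5}{4}$ then yields the claimed bound. The only step requiring care is the factorial-moment count when the vertex sequence has repeats (one must know that the $m$-th factorial moment of a Poisson$(T)$ variable is $T^m$, so that ordered $m$-tuples from the same Poisson process are counted correctly); once that is granted, the rest is a routine generating-function calculation.
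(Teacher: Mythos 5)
Your proof is correct, and it takes a genuinely different route from the paper's. The paper fixes a vertex sequence of length $\ell$ and bounds the probability that it carries successive rings by $\mathbb{P}(\sum_{i=1}^{\ell}\tau_i\leq T)$ via memorylessness (the $\tau_i$ being i.i.d.\ Exp$(1)$ waiting times), applies an exponential Chernoff bound at rate $4$ to this Erlang tail, takes a union bound over the (crude count of) $4^{\ell-1}$ relevant vertex sequences, and then sums the geometric series over $\ell\geq k$. You instead form the weighted count $Z_T=\sum_\pi r^{|\pi|}$ over chronological paths starting at $o$, use Markov's inequality on $Z_T$, and compute $\mathbb{E}[Z_T]$ exactly via the Mecke/Campbell formula for the factorial moment measure of a Poisson process, which cleanly gives $T^m/m!$ for each of the $3^{m-1}$ vertex sequences regardless of repeats. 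You handle the ``ending at $o$'' side by time-reversal on $[0,T]$, which is valid since a unit-rate Poisson process on a finite interval is reversible and the reversed vertex sequence is still a nearest-neighbor walk. The two proofs are, at bottom, both first-moment arguments, and your per-sequence estimate $T^m/m!$ is essentially the Chernoff-free version of the paper's Erlang tail bound; the constants work out: $\tfrac{1}{3}e^{3rT}$ with $r=5/4$ gives $\tfrac{1}{3}e^{15T/4}\leq\tfrac{1}{3}e^{4T}$, and doubling gives $\tfrac{2}{3}\leq\tfrac{5}{4}$. A minor advantage of your route is that the starting-at-$o$ count $3^{m-1}$ is exact and the time-reversal step makes the ``starting or ending'' accounting transparent, whereas the paper's $4^{\ell-1}$ is a slightly informal combined count; a minor advantage of the paper's route is that it avoids any appeal to factorial moment measures and stays entirely at the level of exponential waiting times.
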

The proof of this lemma will appear in the appendix.
A straightforward consequence of Lemma~\ref{lem: chronological} is that for any $T\geq 0$, 
\begin{equation}\label{eq: infinite_chronological}
\mathbb{P}\left( \exists \text{ infinite chronological path for } [0,T]\right) = 0.
\end{equation}

Last, we mention attractiveness, which is a type of monotonicity-preservation property for the discrete dynamics. Consider two sets of initial conditions, $(\sigma^{(1)}(0),\omega)$ and $(\sigma^{(2)}(0),\omega)$, where $\sigma^{(i)}(0) \in \{-1,+1\}^{\mathcal{V}}$ for $i=1,2$, and $\omega$ is one realization of Poisson clocks for which the discrete dynamics are defined. Suppose that $\sigma^{(1)}_x(0) \leq \sigma^{(2)}_x(0)$ for all $x \in \mathcal{V}$. Then by attractiveness of the model, we mean the fact that under these conditions,
\[
\sigma^{(1)}_x(t) \leq \sigma^{(2)}_x(t) \text{ for all }x \in \mathcal{V} \text{ and } t \geq 0.
\]
In other words, if two initial configurations are ordered, and if both processes using these configurations use the same Poisson clocks, then the configurations will remain ordered for all time (see \cite[p.~192]{liggett}).

\section{Proof of continuity}\label{sec: continuity_proof}

\subsection{Coupling between the discrete model and the median process}

We begin by showing that the median process and the discrete spin model are related via a projection. The results of this subsection are valid for more general odd-degree graphs, like odd-degree regular trees.

\begin{lem}\label{lem: projection}
For $p \in [0,1]$, let $\pi_p : [0,1]^{\mathcal{V}} \to \{-1,+1\}^{\mathcal{V}}$ be the projection map given by
\[
(\pi_p(\hat U))_x = 2\mathbf{1}_{\{\hat U_x \leq p\}} -1 = \begin{cases}
+1 & \quad \text{if } \hat U_x \leq p \\
-1 & \quad \text{otherwise},
\end{cases}
\]
where $\hat U = (\hat U_x)$ is an arbitrary element of $[0,1]^{\mathcal{V}}$. For a fixed $p$, the process $(\pi_p(U(t)))_{t \geq 0}$ has the same distribution (on $\{-1,+1\}^{\mathcal{V} \times [0,\infty)}$) as $(\sigma(t))_{t \geq 0}$ does under the measure $\mathbb{P}_p$. In particular, the configuration $\lim_{t \to \infty} \pi_p(U(t))$, given by 
\[
\left( \lim_{t \to \infty} \pi_p(U(t)) \right)_x = \begin{cases}
+1 & \quad \text{if } U_x(t) \leq p \text{ for all large }t \\
-1 & \quad \text{otherwise},
\end{cases}
\]
exists and has the same distribution as $\sigma(\infty)$ does under $\mathbb{P}_p$.
\end{lem}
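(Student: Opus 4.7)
The proof plan rests on a single elementary but essential algebraic identity: for three real numbers $a_1, a_2, a_3$ and any threshold $p$, the median $a_{(2)}$ satisfies $a_{(2)} \leq p$ if and only if at least two of the $a_i$'s are $\leq p$. In other words, thresholding at $p$ intertwines the ternary median with the ternary majority. The whole lemma amounts to applying this observation pointwise in time along the graphical construction.

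First I would verify the base case: at $t=0$, the variables $(U_x(0))_{x\in\mathcal V}$ are i.i.d.\ uniform on $[0,1]$, so $(\pi_p(U(0)))_x = +1$ with probability $p$, independently across $x$. Hence $\pi_p(U(0))$ is distributed as $\mu_p$. Second, I would couple the two dynamics using the same realization $\omega$ of Poisson clocks that drives the median process, and argue by induction on the (locally finite) sequence of clock rings in any bounded time interval. When the clock at $x$ rings at time $t$, the median rule sets $U_x(t) = \operatorname{median}\{U_y(t^-) : y \in \partial x\}$, and by the identity above,
\[
U_x(t) \leq p \;\Longleftrightarrow\; \#\{y \in \partial x : U_y(t^-) \leq p\} \geq 2,
\]
which is exactly the condition for the discrete spin at $x$ to be $+1$ after applying the majority update \eqref{eq: energy_rules} to $\pi_p(U(t^-))$. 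Thus $\pi_p(U(t)) = \Phi_t(\pi_p(U(0)), \omega)$, where $\Phi_t$ denotes the deterministic update map of the discrete majority process; since $\pi_p(U(0))$ is distributed as $\mu_p$ and is independent of $\omega$, the equality in law $(\pi_p(U(t)))_{t\geq 0} \stackrel{\mathrm{law}}{=} (\sigma(t))_{t\geq 0}$ follows. Strictly speaking I would invoke Lemma~\ref{lem: chronological} (or its consequence \eqref{eq: infinite_chronological}) to justify that each vertex is affected by only finitely many clock rings on any bounded interval, so that the pointwise induction is well-posed.

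For the limiting statement, the already-cited Nanda--Newman--Stein result guarantees that $\sigma_x(t)$ fixates a.s., so by the distributional equality just established, $(\pi_p(U(t)))_x$ is also eventually constant in $t$ for each $x$, almost surely. This eventual constancy is equivalent to $U_x(t)$ staying on one side of $p$ for all sufficiently large $t$, which shows that the piecewise definition of $\lim_{t\to\infty}\pi_p(U(t))$ given in the statement is well-defined a.s.\ and coincides with the pointwise limit. The distributional equality with $\sigma(\infty)$ under $\mathbb P_p$ then follows immediately.

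The only real subtlety I anticipate is a measure-zero issue: one must know that a.s.\ no $U_x(t)$ ever equals $p$ exactly, so that the strict/non-strict form of the median--majority equivalence poses no problem. This is immediate because the set of values taken by the process is contained in $\{U_y : y \in \mathcal V\}$ (the median operator never creates new real numbers), and $\mathbb P(U_y = p) = 0$ for every fixed $y$. Beyond that, the proof is bookkeeping, and the essential content is the single observation that thresholding commutes with $\operatorname{median}$ versus $\operatorname{majority}$.
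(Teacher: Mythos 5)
Your proposal is correct and follows essentially the same route as the paper's own proof. The structure is identical: the central observation that thresholding at $p$ intertwines the ternary median with the ternary majority (the paper's two-case analysis of ``at least two neighbors $\leq p$'' versus ``at most one''), the use of finite speed of propagation via chronological paths to make the pointwise argument rigorous, and the final reduction to a statement about distributions of initial data. The only cosmetic difference is directional: you phrase it as a forward induction over the (a.s.\ finite) set of clock rings in the chronological past of a vertex, while the paper argues by contradiction, tracing a chronological path backward in time from a hypothetical first disagreement and using membership in the event $\mathsf{E}$ of \eqref{eq: infinite_E_def} to force that path to terminate. These are two ways of packaging the same finiteness argument. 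One small remark: your ``measure-zero'' worry about $U_x(t)$ exactly equalling $p$ is not actually needed, since the identity $\operatorname{median}(a_1,a_2,a_3)\leq p \iff \#\{i : a_i\leq p\}\geq 2$ holds unconditionally for all real triples, and the projection $\pi_p$ is defined consistently with $\leq p$ on the $+1$ side; but the remark is harmless and the conclusion you draw from it is of course true.
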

\begin{proof}
Because $\lim_{t\to\infty} \pi_p(U(t))$ is a measurable function of the process $(\pi_p(U(t)))_{t \geq 0}$, and $\sigma(\infty)$ is a measurable function of the process $\sigma(\cdot)$, the fact that these configurations have the same distribution is a consequence of the fact that the processes have the same distribution. Therefore, we prove the equality in distribution for the processes.

If we are given an initial condition $(\sigma(0),\omega)$ of a discrete spin configuration and a realization of Poisson clocks $\omega$, then the majority vote dynamics are deterministic. The same is true if we are given an initial condition $(U(0), \omega)$ of continuous spins and Poisson clocks and run the median dynamics. To make this explicit, let $\Phi$ be the (measurable) map which sends $(\sigma(0), \omega)$ to the element $\sigma(\cdot) \in \{-1,+1\}^{\mathcal{V} \times [0,\infty)}$ and let $\Psi$ be the (measurable) map sending $(U(0),\omega)$ to the element $U(\cdot) \in [0,1]^{\mathcal{V} \times [0,\infty)}$. Then, extending the projection $\pi_p$ to $[0,1]^{\mathcal{V} \times [0,\infty)}$ as $\pi_p^*$ by $\pi_p^*(U(\cdot)) = (\pi_p(U(t)))_{t \geq 0}$, we claim that
\begin{equation}\label{eq: commutative}
\Phi(\pi_p(U(0)), \omega) = \pi_p^*(\Psi(U(0), \omega)) \quad \mathbb{P}\text{-a.s.}
\end{equation}
In other words, projecting the process $U(\cdot)$ through $\pi_p^*$ is the same as projecting the initial configuration $U(0)$ through $\pi_p$ and running the discrete dynamics.

To justify \eqref{eq: commutative}, we will use only outcomes in the event
\begin{equation}\label{eq: infinite_E_def}
\mathsf{E} = \bigcap_{T \in \mathbb{N}} \left\{\text{there is no infinite chronological path for } [0,T]\right\}.
\end{equation}
By \eqref{eq: infinite_chronological}, one has $\mathbb{P}(\mathsf{E})=1$. First note that two processes from \eqref{eq: commutative} agree at time 0. In other words, for any $x\in \mathcal{V}$, one has 
\[
\left(\Phi(\pi_p(U(0)),\omega)\right)_x(0) = \pi_p(U(0))_x = \left(\pi_p^*(\Psi(U(0),\omega))\right)_x(0).
\]
Next we construct a chronological path backward in time. Let $x \in \mathcal{V}$ and suppose for a contradiction that the two processes disagree at $x$ at time $t$. Set $x_0=x$ and let $t_0 > 0$ be the time of the first clock ring before (or at) time $t$ at $x_0$. (There must be one, since the processes agree at time $0$.) Given vertices $x_0, \ldots, x_k$ such that $x_i$ is a neighbor of $x_{i+1}$ for all $i$ and times $t_0 > t_1 > \cdots > t_k$, where $t_i$ is a time of a clock ring for $x_i$ for all $i$, we consider the neighbors of $x_k$. If the two processes disagree at some neighbor of $x_k$ at time $t_k^-$, then we define $x_{k+1}$ as any such disagreement neighbor and let $t_{k+1} > 0$ be the time of the first clock ring before time $t_k$ at $x_{k+1}$. Note that a.s. $t_{k+1} < t_k$. Because the constructed path is a chronological path for the interval $[0, t]$ and our outcome is in $\mathsf{E}$, the path must be finite. Supposing the path has $k$ many vertices, we see that the two processes agree at the neighbors of $x_k$ at time $t_k^-$.


There are two cases to consider. First suppose that at least two neighbors of $x_k$ have continuous spin value $\leq p$. In the projected configuration, they have discrete spin $+1$, so when $x_k$'s clock rings, it will assume the spin $+1$. In the median process, $x_k$ will assume a continuous spin $\leq p$, which corresponds also to the spin $+1$. In the other case, at most one neighbor of $x_k$ has continuous spin value $\leq p$, and here, the other two have discrete spin $-1$ in the projected configuration. So when $x_k$'s clock rings, it will assume the spin $-1$. In the median process, $x_k$ will assume a continuous spin $>p$, which corresponds also to the spin $-1$. This means the two processes agree at $x_k$ at time $t_k$, which contradicts our construction of the chronological path. Therefore \eqref{eq: commutative} holds.

To finish the proof of the lemma, we must show that if $B$ is a Borel set in $\{-1,+1\}^{\mathcal{V} \times [0,\infty)}$, then
\begin{equation}\label{eq: to_show_measure_theory}
\mathbb{P}_p(\sigma(\cdot) \in B) = \mathbb{P}((\pi_p(U(t))_{t \geq 0} \in B).
\end{equation}
This equation is restated using the notation in \eqref{eq: commutative} as
\[
\mathbb{P}_p( \Phi(\sigma(0), \omega) \in B) = \mathbb{P}(\pi_p^*(\Psi(U(0),\omega)) \in B).
\]
By \eqref{eq: commutative}, the right side is $\mathbb{P}(\Phi(\pi_p(U(0)),\omega) \in B)$, so we must show that
\[
\mathbb{P}_p( \Phi(\sigma(0), \omega) \in B) = \mathbb{P}(\Phi(\pi_p(U(0)),\omega) \in B).
\]
But the distribution of $(\pi_p(U(0)),\omega)$ under $\mathbb{P}$ is equal to the distribution of $(\sigma(0),\omega)$ under $\mathbb{P}_p$, so this equation holds, and shows \eqref{eq: to_show_measure_theory}. This completes the proof. 
\end{proof}

We next give various properties of the spin variables $U_x(t)$ as $t \to \infty$.
\begin{lem}\label{lem: continuous_limit}
The following hold for the median process.
\begin{enumerate}
\item Almost surely, each $U_x(t)$ has a limit: for any $x \in \mathcal{V}$,
\[
\mathbb{P}\left(U_x(\infty):= \lim_{t \to \infty} U_x(t) \text{ exists}\right) =1.
\]
\item For any $p \in [0,1]$, 
\[
\theta(p) = \mathbb{P}(U_o(t) \leq p \text{ for all large } t)
\]
and so $\mathbb{P}(U_o(\infty) \in [p_c,1-p_c]) = 1$.
\item Almost surely, each $U_x(\infty)$ is the median of its neighboring spins:
\[
U_x(\infty) = \text{median}\left\{ U_y(\infty) : \{x,y\} \in \mathcal{E}\right\}.
\]
Therefore each $x$ has at least one neighbor $y$ such that $U_x(\infty) = U_y(\infty)$.
\end{enumerate}
\end{lem}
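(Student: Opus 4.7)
The plan handles the three parts in order, using Lemma~\ref{lem: projection} and the Nanda-Newman-Stein fixation result as the key ingredients.

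For part 1, I would reduce continuous fixation to a countable family of discrete fixation events. For each $p \in [0,1]$, Lemma~\ref{lem: projection} identifies $\mathbf{1}_{\{U_x(t) \leq p\}}$ with the $+1$-spin indicator of the discrete process under $\mathbb{P}_p$, which fixates a.s.\ by Nanda-Newman-Stein. Intersecting the resulting full-measure events over $p \in \mathbb{Q} \cap [0,1]$ yields a full-measure event on which $\mathbf{1}_{\{U_x(t) \leq p\}}$ is eventually constant for every rational $p$. On that event, if $\liminf_t U_x(t) < \limsup_t U_x(t)$, then a rational $p$ strictly between the two values would force $\mathbf{1}_{\{U_x(t) \leq p\}}$ to flip infinitely often; this contradiction forces $\liminf = \limsup$, so $U_x(\infty)$ exists a.s. This is the main conceptual obstacle of the lemma: everything else reduces to bookkeeping once continuous fixation is in hand.

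For part 2, Lemma~\ref{lem: projection} gives
\begin{align*}
\theta(p) &= \mathbb{P}_p(\sigma_o(\infty)=+1) = \mathbb{P}\bigl((\pi_p(U(t)))_o = +1 \text{ for all large } t\bigr) \\
&= \mathbb{P}(U_o(t) \leq p \text{ for all large } t).
\end{align*}
Combining with part 1, the inclusion $\{U_o(\infty) < p\} \subseteq \{U_o(t) \leq p \text{ for all large } t\}$ gives $\mathbb{P}(U_o(\infty) < p) \leq \theta(p)$, which vanishes for $p < p_c$. Letting $p \uparrow p_c$ along rationals yields $U_o(\infty) \geq p_c$ a.s. The symmetry $U \mapsto 1-U$ preserves both the median dynamics (since $\mathrm{median}(1-a,1-b,1-c) = 1 - \mathrm{median}(a,b,c)$) and the uniform initial law, and it swaps $p \leftrightarrow 1-p$, so the same argument gives $U_o(\infty) \leq 1-p_c$ a.s.

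For part 3, a.s.\ the clock at $x$ rings at times $t_n \uparrow \infty$, and at each such time $U_x(t_n) = \text{median}\{U_y(t_n^-) : y \in \partial x\}$. Using part 1 applied to $x$ and to each of its three neighbors, both sides converge as $n \to \infty$, and continuity of the median on $\mathbb{R}^3$ yields $U_x(\infty) = \text{median}\{U_y(\infty) : y \in \partial x\}$. Since the median of three real numbers always equals one of them, some neighbor $y$ satisfies $U_x(\infty) = U_y(\infty)$.
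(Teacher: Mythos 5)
Your proof is correct and follows essentially the same approach as the paper: part~1 reduces to discrete fixation over rational $p$ via Lemma~\ref{lem: projection} exactly as the paper does; part~3 is the same ``median along clock rings, pass to the limit'' argument; and part~2, though you argue directly via the inclusion $\{U_o(\infty) < p\} \subseteq \{U_o(t) \leq p \text{ for all large } t\}$ and monotone limits along rationals while the paper argues by contradiction with an explicit $\epsilon$, is the same idea phrased in a slightly cleaner form.
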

\begin{proof}
For any $x$ and any $p$, the discrete spin $\sigma_x(t)$ has a limit as $t \to \infty$ a.s.~under the measure $\mathbb{P}_p$ (see \eqref{eq: discrete_limit}). Then, by Lemma~\ref{lem: projection}, the event that
\[
\lim_{t \to \infty} \mathbf{1}_{\{U_x(t) \leq p\}} \text{ exists for all rational } p \in [0,1]
\]
has probability one. But on this event, $\lim_{t \to \infty} U_x(t)$ has a limit. Indeed, if $\liminf_{t\to\infty} U_x(t)$ and $\limsup_{t\to\infty} U_x(t)$ were different, then we could find a rational $p$ strictly between these them, and $\mathbf{1}_{\{U_x(t) \leq p\}}$ would not have a limit. This would be a contradiction, and this shows item 1. 

For item 2, use Lemma~\ref{lem: projection} to write
\[
\theta(p) = \mathbb{P}_p(\sigma_o(\infty) = +1) = \mathbb{P}(U_o(t) \leq p \text{ for all large }t).
\]
Next, if $\mathbb{P}(U_o(\infty) \in [p_c,1-p_c]) < 1$, then by symmetry we can find $\epsilon>0$ such that with positive probability, $U_o(\infty)\in [0,p_c-\epsilon)$. On this event, for all large $t$, one has $U_o(t) <p_c-\epsilon/2$. Choosing any $p \in (p_c-\epsilon/2,p_c)$, and using Lemma~\ref{lem: projection} for this $p$, we find that with positive probability under $\mathbb{P}_p$, the discrete spin $\sigma_o(t)$ is equal to $+1$ for all large $t$. But this implies that $\sigma_o(\infty) = +1$ with positive probability under $\mathbb{P}_p$, a contradiction since $p<p_c$.

Item 3 is a consequence of the fact that each vertex has infinitely many clock rings a.s. After each clock ring at $x$, the spin at $x$ is equal to the median of its neighboring spins. Therefore the set of times $t$ at which $U_x(t)$ is the median of the values $U_y(t)$ for $\{x,y\} \in \mathcal{E}$ is a.s. unbounded. But because each continuous spin has a limit as $t \to \infty$, we conclude the statement of item 3.
\end{proof}

\subsection{Proof of Theorem~\ref{thm: continuity} given Theorem~\ref{thm: continuous_theorem}}\label{sec: conditional_proof}


\begin{proof}
Assume that the statement of Theorem~\ref{thm: continuous_theorem} is true. Then a.s.\ $o$ flips finitely often, so by \eqref{eq: infinite_chronological}, a.s., one has $U_o(\infty) = U_y(0)$ for some $y \in \mathcal{V}$.

Therefore, the random variable $U_o(\infty)$ has absolutely continuous distribution, since for any $B \subset \mathbb{R}$ of Lebesgue measure zero,
\[ 
\P( U_o(\infty) \in B) \le \sum_{ y \in \mathcal{V} } \P( U_y(0) \in B ) = 0.
\]
By Theorem~\ref{thm: continuous_theorem} and item 2 of Lemma~\ref{lem: continuous_limit}, we have
 \begin{equation}\label{eq:median_spin_infty}
   \P( U_o(\infty) \le p) =  \mathbb{P}(U_o(t) \leq p \text{ for all large }t) = \theta(p). 
   \end{equation}
So $\theta$ is also absolutely continuous and Theorem~\ref{thm: continuity} holds.
\end{proof}

\begin{rem}\label{rem: new_remark}
Equation~\eqref{eq:median_spin_infty} shows  that for any fixed $p$, the configuration $\pi_p(U(\infty))$ has the same distribution under $\mathbb{P}$ as $\sigma(\infty)$ does under $\mathbb{P}_p$.

%
\end{rem}

The main work lies in the proof of Theorem~\ref{thm: continuous_theorem}, which will be split over the next few subsections.

\subsection{Agreement clusters}

Recall that the agreement graph is the subgraph of $\mathbb{T}_3$ generated by the agreement edges --- those edges $\{x,y\}$ such that $U_x(\infty) = U_y(\infty)$.
The remarkable property of agreement clusters is that they encode exactly when spins flip infinitely often in the median process. Indeed, a spin flips infinitely often if and only if its agreement cluster is infinite. The following result is valid for more general odd-degree Cayley graphs including odd-degree regular trees.
\begin{prop}\label{prop: infinite_equivalence}
The following events are a.s.~equal.
\begin{enumerate}
\item $U_o(t^-) \neq U_o(t)$ for only finitely many $t$.
\item $\mathcal{A}_o$ is finite.
\end{enumerate}
\end{prop}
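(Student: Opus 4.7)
The plan is to prove the two inclusions of the event equivalence separately, each relying on the median characterization at infinity (Lemma~\ref{lem: continuous_limit}, item~3), which imposes a rigid local structure on the limit values near any vertex.

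For the forward inclusion $\{|\mathcal{A}_o|<\infty\}\subseteq\{o \text{ flips only finitely often}\}$, I would set $S:=\mathcal{A}_o$, let $\partial S$ be its finite external boundary in $\mathbb{T}_3$, and put $v:=U_o(\infty)$. Since $U_y(\infty)\neq v$ for every $y\in\partial S$ and $\partial S$ is finite, I define $\delta:=\tfrac{1}{2}\min_{y\in\partial S}|U_y(\infty)-v|>0$. By Lemma~\ref{lem: continuous_limit} there exists an a.s.\ finite $T$ after which $|U_x(t)-v|<\delta$ for all $x\in S$ and $|U_y(t)-v|>\delta$ for all $y\in\partial S$. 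I then case-split on the number $k\in\{1,2,3\}$ of in-$S$ neighbors of any $x\in S$; in the $k=1$ case, item~3 forces the two outside neighbors to have limits on opposite sides of $v$ (otherwise $\text{median}\{U_z(\infty):z\in\partial x\}\neq v$), while the cases $k\in\{2,3\}$ need no further structural input. In every case, a clock ring at $x\in S$ after time $T$ assigns to $U_x$ the spin value of one of $x$'s in-$S$ neighbors, so every post-$T$ update inside $S$ merely copies an existing $S$-value to another $S$-site and introduces no new value. Hence the finite multiset $V(t):=\{U_x(t):x\in S\}$ is non-increasing in $t\geq T$, and since $U_o(t)\in V(T)$ for all $t\geq T$, $V(T)$ is finite, and $U_o(t)\to v$, I conclude $v\in V(T)$ and $U_o(t)=v$ for all sufficiently large $t$, so $o$ flips only finitely often.

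For the reverse inclusion I would argue the contrapositive: on $\{|\mathcal{A}_o|=\infty\}$ show $o$ flips infinitely often. As an infinite subtree of $\mathbb{T}_3$, $\mathcal{A}_o$ contains an infinite self-avoiding ray $o=y_0,y_1,y_2,\ldots$ all of whose vertices have limit $v$. The plan is to propagate the Direction~1 stability inward along this ray: if $o$ flipped only finitely often, then by the case analysis above, at every post-$T$ clock ring of $o$ some in-$\mathcal{A}_o$ neighbor of $o$ would have to have spin equal to $v$ at that instant. A pigeonhole argument identifies some such neighbor, say $y_1$, for which this holds at infinitely many of $o$'s post-$T$ clock rings, placing $y_1$ at $v$ on a set of times dense in $[T,\infty)$. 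Combined with the piecewise-constancy of $y_1$'s spin between its own clock rings and an invariant mass-transport argument of the type described in Section~\ref{sec: tools}, I would propagate stability to $y_1$, then iteratively to $y_2,y_3,\ldots$, ultimately deriving a contradiction against an invariance or finite-propagation estimate.

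The main technical obstacle lies in the reverse inclusion: making the cascade of stabilization along the ray rigorous. Direction~1 was powered by simultaneous proximity-to-limit of all vertices in a single finite set $S\cup\partial S$ after a uniform time $T$, whereas along the infinite ray no single uniform settling time exists. The delicate step will be showing that $y_1$'s dense near-$v$ footprint actually forces $y_1$ to stabilize in the same sense, despite $y_1$'s own clock rings potentially disrupting the pattern; the argument must carefully exploit the rate-one independence of the Poisson clocks and the structural rigidity from Lemma~\ref{lem: continuous_limit}.
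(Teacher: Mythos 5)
Your proof of the inclusion $\{|\mathcal{A}_o|<\infty\}\subseteq\{o \text{ flips finitely often}\}$ (paper's direction ``item 2 implies item 1'') is correct and essentially the argument of the paper: fix a finite time $T$ after which all the finitely many vertices in $S\cup\partial S$ are within/at-distance-at-least $\delta$ of their limits, observe that every post-$T$ update inside $S$ copies an existing $S$-value (the $k=1$ opposite-sides observation via item~3 of Lemma~\ref{lem: continuous_limit} is fine, and indeed the paper gets the same conclusion slightly more directly by noting that $U_y(t)\neq U_o(t)$ eventually for out-neighbors $y$), so $U_o(t)$ ranges over a finite set for $t\geq T$ and, since it converges, is eventually constant.

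The reverse inclusion is where the proposal has a genuine gap, which you acknowledge but which is not repairable along the lines you sketch. Knowing that $U_{y_1}(t)=v$ at infinitely many late clock rings of $o$ does \emph{not} force $y_1$ to stabilize: $U_{y_1}$ can visit $v$ on ever-later intervals and still oscillate away (indeed $U_{y_1}(\infty)=v$ is already known since $y_1\in\mathcal{A}_o$, so this observation is essentially circular). The ``invariant mass-transport argument of the type described in Section~\ref{sec: tools}'' that is supposed to propagate stability to $y_1,y_2,\ldots$ is not specified, and the obstruction you identify (no uniform settling time along an infinite ray) is precisely what defeats the cascade. The paper proves ``item 1 implies item 2'' by a quite different route that sidesteps the cascade entirely: on the event that $o$ flips finitely often, the eventual value $U_o(\infty)$ must equal an \emph{initial} uniform $U_y(0)$ for some $y$ within bounded distance $M$ of $o$ (a chronological-path fact). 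Define $E_x$ to be the event that $U_x(\infty)=U_y(0)$ for some $y$ with $\mathrm{dist}(x,y)\le M$ and $\mathcal{A}_x$ is infinite; on the putative event where item 1 holds but $\mathcal{A}_o$ is infinite, one can pick $M$ with $\mathbb{P}(E_o)>0$. A mass-transport argument (sending unit mass from each $x$ to the nearest vertex of $\mathcal{A}_x$ in $E_{\cdot}$) then shows that any infinite agreement cluster contains either no or infinitely many vertices $x$ with $E_x$; so with positive probability $\mathcal{A}_o$ contains two vertices $x_1,x_2$ at distance $>2M$ with $E_{x_1},E_{x_2}$, which produces distinct $y_1\neq y_2$ with $U_{y_1}(0)=U_{x_1}(\infty)=U_{x_2}(\infty)=U_{y_2}(0)$, a probability-zero event. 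This ``trace back to an initial value, then use distinctness of the initial uniforms'' idea is the missing ingredient in your plan.
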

\begin{proof}
We first define, for $x \in \mathcal{V}$ and a given $M>0$, the event $E_x$ that (a) $U_x(\infty) = U_y(\infty)$ for some $y \in \mathcal{V}$ with $\text{dist}(x,y) \leq M$ and (b) $\mathcal{A}_x$ is infinite. Suppose for a contradiction that with positive probability, item 1 holds, but item 2 does not hold. Then we can choose $M>0$ such that $\mathbb{P}(E_o)>0$. An application of the mass transport principle shows that a.s., each infinite agreement cluster contains either infinitely many $x$ such that $E_x$ occurs or no such $x$. Indeed, we may define a mass transport $m(x,y)$ for $x,y \in \mathcal{V}$ to be 1 if $y$ is the closest vertex of $\mathcal{A}_x$ (using some invariant ordering to break ties, as in Section~\ref{sec: tools}) such that $E_y$ occurs, and to be 0 otherwise. On the event that some agreement cluster is infinite but contains only finitely many $x$ for which $E_x$ occurs, there exists $y$ such that $\sum_{x \in \mathcal{V}}m(x,y) = \infty$. If this event has positive probability, then $\mathbb{E}\sum_{x \in \mathcal{V}} m(x,o) = \infty$, and by the mass transport principle, $\mathbb{E}\sum_{y \in \mathcal{V}}m(o,y) =\infty$. But this integrand is bounded a.s.~by 1, and this gives a contradiction. 

We conclude that under our assumption that with positive probability, item 1 holds, but item 2 does not hold, one also has that with positive probability, $\mathcal{A}_o$ is infinite and contains infinitely many vertices $x$ such that $E_x$ occurs. On this event, choose two vertices $x_1$ and $x_2$ in $\mathcal{A}_o$ such that (a) $\text{dist}(x_1,x_2) > 2M$ and (b) $E_{x_i}$ occurs for $i=1,2$. Then by definition of $E_{x_i}$, we can also find vertices $y_1,y_2$ such that $\text{dist}(y_i,x_i) \leq M$ and $U_{x_i}(\infty) = U_{y_i}(0)$ for $i=1,2$. However $y_1$ cannot equal $y_2$ since $\text{dist}(y_1,y_2) \geq \text{dist}(x_1,x_2) - 2M > 0$, so this implies that $U_{y_1}(0) = U_{y_2}(0)$ for distinct vertices $y_1,y_2$, and this event has zero probability. This is a contradiction, and shows that a.s., item 1 implies item 2.

Now we will show that a.s. item 2 implies item 1. We claim first that a.s., there exists a (random) $T>0$ such that for any $t\geq T$ if $o$ flips at time $t$, then $o$ takes the spin value of a neighbor which is also in $\mathcal{A}_o$. In other words, there is $T>0$ such that for any $t\geq T$, if $U_o(t^-) \neq U_o(t)$, then $U_o(t) = U_y(t)$ for some $y \in \mathcal{A}_o$ which is a neighbor of $o$. The claim is obvious unless $o$ has a neighbor which is not in $\mathcal{A}_o$. In that case, we use the fact that a.s., each $U_y(t)$ has a limit as $t \to \infty$ and that any $y$ which is a neighbor of $o$ but is not in $\mathcal{A}_o$ has $U_y(\infty) \neq U_o(\infty)$. Then for $t$ large enough, one has $U_y(t) \neq U_o(t)$ and the claim holds.

Using the claim, if $\mathcal{A}_o$ is finite, then we can a.s. select $T>0$ such that for all vertices $x$ of $\mathcal{A}_o$, if $x$ flips at time $t \geq T$, then it takes the spin value of a vertex of $\mathcal{A}_o$. This implies that for $t\geq T$, $U_o(t)$ can take only one of the finitely many values in the set $\{U_x(T): x \text{ is a vertex of } \mathcal{A}_o\}$. But since $U_o(t)$ a.s. converges as $t \to \infty$, it must therefore be eventually constant in $t$, meaning that item 1 holds. This shows that item 2 a.s. implies item 1 and completes the proof.
\end{proof}

Proposition~\ref{prop: infinite_equivalence} shows that to prove Theorem~\ref{thm: continuous_theorem} (and therefore Theorem~\ref{thm: continuity}), it suffices to show that a.s., all agreement clusters are finite. As a first step, we show that infinite agreement clusters, if they exist, cannot contain vertices of degree one. 
The following proof seems to require every vertex to have degree 3, though the other results up to this point work on odd-degree trees. 
\begin{lem}\label{lem: no_degree_one}
Almost surely, if $o$ has only one neighbor in $\mathcal{A}_o$, then $\mathcal{A}_o$ is finite.
\end{lem}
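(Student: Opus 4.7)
The plan is to argue by contradiction via the mass transport principle. Let $E$ be the event that $o$ has exactly one neighbor in $\mathcal{A}_o$ and $\mathcal{A}_o$ is infinite, and suppose toward a contradiction that $\mathbb{P}(E)>0$. The first step is to extract a rigid structural consequence of $o$'s leaf status. By item 3 of Lemma~\ref{lem: continuous_limit}, $U_o(\infty)$ equals the median of the limits at the three neighbors. The unique cluster neighbor $y$ contributes the equal value $U_o(\infty)$, while the two non-cluster neighbors $b_1,b_2$ satisfy $U_{b_i}(\infty)\neq U_o(\infty)$ (otherwise $\{o,b_i\}$ would be an agreement edge, contradicting $b_i\notin\mathcal{A}_o$). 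For the median to equal $U_o(\infty)$, the two values $U_{b_1}(\infty),U_{b_2}(\infty)$ must then straddle $U_o(\infty)$ strictly on opposite sides; relabel so that
\[
U_{b_1}(\infty)<U_o(\infty)<U_{b_2}(\infty).
\]

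Next, I would use a mass transport to rule out the case that $\mathcal{A}_o$ has only finitely many degree-$1$ vertices. Writing $\mathcal{L}(\mathcal{A}_x)$ for the set of leaves of $\mathcal{A}_x$, define $m(x,y)=1/\#\mathcal{L}(\mathcal{A}_x)$ for $y\in\mathcal{L}(\mathcal{A}_x)$ on the event $\{\mathcal{A}_x\text{ is infinite},\ \#\mathcal{L}(\mathcal{A}_x)<\infty\}$, and $m(x,y)=0$ elsewhere. The outgoing mass at every vertex is at most $1$, but on the event $\{\mathcal{A}_o\text{ infinite},\ \#\mathcal{L}(\mathcal{A}_o)<\infty,\ o\in\mathcal{L}(\mathcal{A}_o)\}$ the incoming mass at $o$ equals $\#\mathcal{A}_o/\#\mathcal{L}(\mathcal{A}_o)=\infty$. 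The mass transport principle then forces this event to have probability zero; in particular, on $E$ almost surely $\mathcal{A}_o$ has infinitely many leaves.

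The remaining task---ruling out an infinite $\mathcal{A}_o$ with infinitely many leaves, $o$ being one of them---is the main obstacle, and the step the authors flag as requiring the degree-$3$ hypothesis. My intended route is to view the agreement graph as an automorphism-invariant bond percolation on $\mathbb{T}_3$ whose infinite clusters carry infinitely many leaves, and to combine this with the rigid above/below pairing at each leaf supplied by the first step. I would try to set up a refined mass transport that pairs each leaf with its $b_1$- and $b_2$-neighbors and exploits the fact that each non-cluster neighbor lies in a distinct agreement cluster with a strictly different value; alternatively, I would try to apply an invariant percolation result in the spirit of Lemma~\ref{cor: percolation} to deduce the incompatibility of such a cluster with $\mathbb{T}_3$-invariance. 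The essential input is that on $\mathbb{T}_3$ a leaf of the cluster has exactly two non-cluster neighbors, which are forced by the median rule into a strict one-above-one-below configuration with no slack; on $\mathbb{T}_d$ with $d\ge 5$ a leaf would have $d-1\ge 4$ non-cluster neighbors and this rigidity would be lost, which explains why only degree $3$ suffices.
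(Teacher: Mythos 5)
Your argument does not close. Steps one and two are fine as far as they go: the observation that the two non-cluster neighbors $b_1,b_2$ of a leaf $o$ must satisfy $U_{b_1}(\infty)<U_o(\infty)<U_{b_2}(\infty)$ is a correct consequence of item 3 of Lemma~\ref{lem: continuous_limit}, and your mass-transport rule $m(x,y)=1/\#\mathcal{L}(\mathcal{A}_x)$ correctly rules out an infinite agreement cluster with finitely many leaves. But the third step---where the contradiction must actually be produced---is explicitly left as a sketch (``My intended route is to\ldots I would try to\ldots alternatively, I would try to\ldots''), and the two candidate strategies you float both face a structural obstruction. The agreement graph, viewed only as an automorphism-invariant bond percolation on $\mathbb{T}_3$, is not forbidden from having infinite clusters with infinitely many leaves: invariance alone does not preclude such geometry (e.g.~clusters resembling a supercritical branching-process tree have leaves at positive density). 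So there is no purely static mass-transport or invariant-percolation argument on the time-infinity configuration that will finish from the ingredients you have assembled. What singles out the agreement graph is its dynamical origin, and that is precisely what your proof does not use.

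The paper's proof of Lemma~\ref{lem: no_degree_one} is a Nanda--Newman--Stein energy argument that works on the finite-time process, not on the limiting configuration. It tracks $E_x(t)=\sum_{y\sim x}\mathbf{1}_{\{U_x(t)\neq U_y(t)\}}$, the number of disagreement edges at $x$ at time $t$, and uses the mass-transport identity $\mathbb{E}[E_x(t)-E_x(0)]=2\,\mathbb{E}\,m_t(x,x)$ to bound $|\mathbb{E}\,m_t(x,x)|\le 3$ uniformly in $t$. The crucial degree-$3$ input is that a flip at $x$ can never increase $E_x$: if $x$ has $\ge 2$ incident disagreement edges it adopts a neighbor's value and so does not increase the count, and if $x$ has $\le 1$ disagreement edge then it already equals the median and does not flip at all. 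This monotonicity of $m_t(x,x)$ gives \eqref{eq: finite_decrease}: a.s.\ $x$ strictly decreases its disagreement count only finitely often. Then, on the event that $o$ is a leaf of an infinite $\mathcal{A}_o$, Proposition~\ref{prop: infinite_equivalence} makes $o$ flip at arbitrarily large times, and for large flip times the two non-cluster neighbors have eventually separated from $o$, so each such flip must go from $3$ disagreement edges to $2$---a strict decrease---contradicting \eqref{eq: finite_decrease}. Note also that the authors' stated reason for the degree-$3$ restriction is this energy-monotonicity claim, which can fail in degree $\ge 5$; it is not the ``one-above/one-below'' rigidity you cite, since a leaf on $\mathbb{T}_5$ still has a determined median constraint among its four non-cluster neighbors. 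In short, the observations you make are true but do not touch the dynamical mechanism, and the missing step is the entire content of the lemma.
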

\begin{proof}
We use a version of the Nanda-Newman-Stein argument \cite[Theorem~2]{NNS}. For $x \in \mathcal{V}$ and $t \geq 0$, we define the energy 
\begin{equation}\label{eq: natural_energy}
E_x(t) = \sum_{y \in \partial x} \mathbf{1}_{\{U_x(\infty) \neq U_y(\infty)\}};
\end{equation}
 that is, the number of edges of disagreement incident to $x$ at time $t$. Each time a spin flips, the energy $E_x(t)$ may change. Write $m_t(x,y)$ as the total contribution to $E_x(t)-E_x(0)$ caused by flips at $y$. For example, if $y$ flips three times until time $t$ and changes the energy at $x$ by amounts $1, -1, 1$, then $m_t(x,y)$ is $1$. We can then compute, using invariance of our process under automorphisms of $\mathbb{T}_3$,
\begin{align*}
\mathbb{E}\left[E_x(t) - E_x(0)\right] &= \mathbb{E}m_t(x,x) + \sum_{y\neq x} \mathbb{E}m_t(x,y) \\
&= \mathbb{E}m_t(x,x) + \mathbb{E} \sum_{y\neq x} m_t(y,x).
\end{align*}
However, $\sum_{y\neq x} m_t(y,x) = m_t(x,x)$, so we obtain
\begin{equation}\label{eq: mass_identity}
\mathbb{E}\left[ E_x(t) - E_x(0) \right] = 2\mathbb{E}m_t(x,x).
\end{equation}
The left side is bounded by 6, so we find $\left| \mathbb{E}m_t(x,x) \right| \leq 3$ for all $t$.

We claim that each time a vertex $x$ flips, it cannot increase $E_x(t)$. We can argue for this by cases: if $x$ has three or two disagreement edges before a flip, it will take the value $U_y(t)$ of one of its neighbors $y$, and therefore not increase its number of incident disagreement edges. If $x$ has one or zero incident disagreement edges, then $U_x(t)$ must already equal the median of its neighbors values, and therefore will not flip. From this claim we see that $m_t(x,x)$ is monotone in $t$, so by the monotone convergence theorem, $m_\infty(x,x) := \lim_{t \to \infty} m_t(x,x)$ satisfies $|\mathbb{E}m_\infty(x,x)| \leq 3$, and thus $m_\infty(x,x)$ is a.s. finite. However, each time $x$ flips and strictly decreases its own energy, $m_t(x,x)$ decreases by at least 1. We conclude that for any $x \in \mathcal{V}$, a.s.,
\begin{equation}\label{eq: finite_decrease}
x \text{ strictly decreases its number of incident disagreement edges only finitely often.}
\end{equation}

Now assume for a contradiction that with positive probability, $o$ has only one neighbor in $\mathcal{A}_o$ but that $\mathcal{A}_o$ is infinite. Then by Proposition~\ref{prop: infinite_equivalence}, with positive probability, $o$ has only one neighbor in $\mathcal{A}_o$, but $o$ flips at arbitrarily large times. Thus there is a sequence $(t_n)$ of times such that $t_n \to \infty$ and for all $n$, $U_o(t_n^-) \neq U_o(t_n)$. Because $o$ has only one neighbor in $\mathcal{A}_o$, its neighbors $y$ outside of $\mathcal{A}_o$ have $U_y(\infty) \neq U_o(\infty)$, and therefore $U_y(t) \neq U_o(t)$ for all large $t$. This implies that for $n$ large, neither of the values $U_o(t_n^-)$ or $U_o(t_n)$ are equal to any $U_y(t)$ for $y$ a neighbor of $o$ outside of $\mathcal{A}_o$. Thus exactly one of $U_o(t_n^-)$ or $U_o(t_n)$ equals $U_z(t_n)$, where $z$ is the neighbor of $o$ in $\mathcal{A}_o$. Because $o$ can never strictly increase its number of incident disagreement edges, we find that $U_o(t_n) = U_z(t_n)$ and $o$ therefore flips at time $t_n$ from having three incident disagreement edges to two. This is true for all large $n$, and so contradicts \eqref{eq: finite_decrease}. Therefore, a.s., if $o$ has only one neighbor in $\mathcal{A}_o$, then $\mathcal{A}_o$ must be finite.
\end{proof}

The method of proof of Lemma~\ref{lem: no_degree_one} also shows that for all large $t$, the spin at a given vertex must agree with the spin of at least one of its neighbors.
\begin{lem}\label{lem: agree_with_neighbor}
For any $x$, there is a (random, a.s.~finite) time $T_x$ such that for $t\geq T_x$, there is at least one neighbor $y = y(t)$ of $x$ such that $U_y(t) = U_x(t)$. 
\end{lem}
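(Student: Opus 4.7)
The plan is to mimic the proof of Lemma~\ref{lem: no_degree_one}, applying its finite-decrease property \eqref{eq: finite_decrease} directly to $x$. Let $T_0$ be the a.s.\ finite random time after which $x$ has no further $3 \to 2$ flips of $E_x$.

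The central observation is that whenever $x$'s clock rings at a time $\tau$ with $E_x(\tau^-) = 3$, $x$ must flip and it is necessarily a $3 \to 2$ strict decrease: the median of the three neighboring spins equals one of them, and so differs from $U_x(\tau^-)$, which disagrees with all three; this forces the flip, and since the new value of $U_x$ equals some neighbor's spin, $E_x(\tau) = 2$. Hence at every clock ring $\tau > T_0$ of $x$, $E_x(\tau^-) \le 2$ and $E_x(\tau) \le 2$, so at such times at least one neighbor agrees with $x$.

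To extend from the discrete set of $x$'s clock-ring times to every $t \ge T_x$, I would rule out any maximal interval $[a, b) \subset (T_0, \infty)$ on which $E_x \equiv 3$. Such an excursion can only be opened by a neighbor flip from agreement to disagreement with $x$ (since $x$-flips never raise $E_x$) and cannot contain any $x$-clock ring (which would be a forbidden $3 \to 2$ decrease). By the compensator identity for the Poisson clock at $x$ applied to the predictable set $\{s : E_x(s^-) = 3\}$, together with the bound $|\mathbb{E} m_t(x, x)| \le 3$ established in Lemma~\ref{lem: no_degree_one}, one obtains $\mathbb{E} \int_0^\infty \mathbf{1}\{E_x(s) = 3\}\, ds \le 3$, so a.s.\ this set has finite Lebesgue measure. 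A mass-transport argument tracking the neighbor flips that initiate each excursion and comparing with the neighbor flips that terminate them then forces the total number of excursions to be a.s.\ finite, and $T_x$ can be set beyond the last one.

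The main obstacle I anticipate is precisely this last step: passing from ``a.s.\ $\int \mathbf{1}\{E_x = 3\}\, dt < \infty$'' to ``a.s.\ only finitely many excursions of $E_x = 3$''. The compensator bound is a priori consistent with a sequence of shrinking excursions accumulating at infinity, and ruling them out requires extra structural input, since the excursion-opening flip at a neighbor of $x$ is a $2 \to 2$ event at that neighbor and is therefore not directly controlled by \eqref{eq: finite_decrease} applied there. The remainder of the argument is a direct transcription of the energy and mass-transport bookkeeping already present in the proof of Lemma~\ref{lem: no_degree_one}.
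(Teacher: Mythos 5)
Your key observation (a clock ring at $x$ while $E_x = 3$ forces a strict $3 \to 2$ decrease) is exactly the one the paper uses, and your compensator bound $\mathbb{E}\int_0^\infty \mathbf{1}\{E_x(s^-)=3\}\,ds \le 3$ is a valid deduction from \eqref{eq: finite_decrease} together with the martingale property of the clock at $x$. But the gap you flag at the end is real and, as proposed, you have not closed it. Finite Lebesgue measure of $\{s : E_x(s)=3\}$ is compatible with infinitely many excursions shrinking toward zero length as $t \to \infty$, and the sketched mass-transport fix does not control this scenario: the neighbor flip opening such an excursion (a neighbor $y$ flipping from agreeing with $x$ to disagreeing) is generically a move that leaves $E_y$ unchanged — $y$ exchanges agreement with $x$ for agreement with another of $y$'s own neighbors — so these openings are \emph{not} counted by \eqref{eq: finite_decrease} applied to $y$, and a mass transport sending unit mass from $x$ to the opening neighbor has no a priori summable bound on either the mass in or the mass out. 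The transport principle would only equate two possibly infinite expectations and yield no contradiction.

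The paper resolves the same difficulty in a different and cleaner way, one worth internalizing: it defines stopping times $\tau_0 = 0$ and $\tau_k = \inf\{t \ge \tau_{k-1}+1 : E_x(t) = 3\}$ with a built-in unit gap, and observes that conditionally on $\{\tau_k < \infty\}$, there is a uniform $c>0$ probability that during $(\tau_k, \tau_k+1)$ the clock at $x$ rings while none of its neighbors' clocks ring. On that event $E_x$ is still $3$ at the time of $x$'s ring (no neighbor has moved), so a strict decrease occurs. A conditional Borel--Cantelli iteration then shows that on $\{\tau_k < \infty \text{ for all } k\}$, the number of strict decreases of $E_x$ is a.s.\ infinite, contradicting \eqref{eq: finite_decrease}; hence a.s.\ some $\tau_k = \infty$, which is precisely the boundedness of $\{t : E_x(t)=3\}$ that you were missing. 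Note that this argument is insensitive to the lengths of the excursions — even vanishingly short excursions occurring at unboundedly large times still generate infinitely many finite $\tau_k$'s — so it sidesteps exactly the accumulation issue your compensator bound cannot. I would replace your final paragraph with this stopping-time/Borel--Cantelli mechanism rather than trying to salvage the mass transport.
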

\begin{proof}
Define a sequence of stopping times by $\tau_0=0$ and
\[
\tau_k = \inf\{t \geq \tau_{k-1}+1 : U_x(t) \neq U_y(t) \text{ for all neighbors }y \text{ of } x\} \text{ for } k \geq 1,
\]
where $\inf \emptyset = \infty$. Let $A_t$ be the event that the number of disagreement edges incident to $x$ strictly decreases in the interval $(t,t+1)$ due to a flip of $x$. Writing $\mathcal{F}_{\tau_k}$ as the $\sigma$-algebra of the past before time $\tau_k$, by the strong Markov property, one has for some $c>0$ independent of $k$,
\begin{equation}\label{eq: markov}
\mathbb{P}\left( A_{\tau_k} \mid \mathcal{F}_{\tau_k}\right) \mathbf{1}_{\{\tau_k < \infty\}} \geq c \mathbf{1}_{\{\tau_k < \infty\}}.
\end{equation}
Indeed, for the event $A_{\tau_k}$ to occur, since $x$ disagrees with all neighbors at time $\tau_k$, the clock of $x$ should ring in the interval $(\tau_k,\tau_{k+1})$, while none of the clocks of the neighbors of $x$ ring. This has positive probability independent of $k$. 

We claim then that a.s. on the event $\{\tau_k < \infty \text{ for all } k\}$, $x$ strictly decreases its number of disagreement edges infinitely many times. To see why, we use standard Markov chain arguments: for any $n,N \geq 1$, by \eqref{eq: markov},
\begin{align*}
\mathbb{P}(\cap_{k=n}^{\infty} (A_{\tau_k}^c  \cap \{\tau_{k+1} < \infty\})) &\leq \mathbb{P}(\cap_{k=n}^{n+N} (A_{\tau_k}^c \cap \{\tau_{k+1} < \infty\})) \\
&= \mathbb{E}\left[ \mathbb{P}(A_{\tau_{n+N}}^c \cap \{\tau_{n+N+1} < \infty\} \mid \mathcal{F}_{\tau_{n+N}}) \mathbf{1}_{\{\cap_{k=n}^{n+N-1} (A_{\tau_k}^c \cap \{\tau_{k+1} < \infty\})\}}\right] \\
&\leq (1-c) \mathbb{P}\left( \cap_{k=n}^{n+N-1} (A_{\tau_k}^c \cap \{\tau_{k+1} < \infty\})\right).
\end{align*}
Continuing in this way, we find that the left side is bounded above by $(1-c)^{N}$ for each $N\geq 1$, so it is zero. This establishes that for any $n \geq 1$, a.s., on the event $\{\tau_k < \infty \text{ for all }k\}$, at least one event of the form $A_{\tau_k}$ occurs for some $k \geq n$. Intersecting over $n$ gives the claim.

By \eqref{eq: finite_decrease} and the above claim, we obtain $\mathbb{P}(\tau_k < \infty \text{ for all } k) = 0$, and this completes the proof. 
\end{proof}

 The last result we will need for our main proofs states that for any $x$, $U_x(t)$ cannot oscillate around its limit $U_x(\infty)$. For its statement, denote $\mathrm{sgn}(u) =-1, 0, 1$ if $u<0,  = 0, > 0$ respectively. 
\begin{lem}\label{l:NNS_finite_osc}
For any $x \in \mathcal{V}$, the limit $\lim_{ t \to \infty} \mathrm{sgn}( U_x(t)  - U_x(\infty) ) $  exists almost surely. 
\end{lem}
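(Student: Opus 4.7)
Write $u := U_x(\infty)$ for brevity. The plan is to adapt the Nanda--Newman--Stein mass transport energy argument used for Lemma~\ref{lem: no_degree_one} to a new energy designed to detect sign oscillation of $U_x(t)$ around $u$. Define the sign-weighted energy
\[
\mathcal{E}_x(t) := \sum_{y \in \partial x} w\!\left(\mathrm{sgn}(U_x(t) - u_x),\, \mathrm{sgn}(U_y(t) - u_x)\right),
\]
where the weight $w : \{-1,0,+1\}^2 \to \{0,1,2\}$ is symmetric with $w(s,s)=0$, $w(+1,-1)=2$, and $w=1$ on the remaining off-diagonal pairs involving $0$.

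The first step is a deterministic case check: when $x$'s clock rings at time $t$ (so $x$ takes the median of its three neighbors' values), $\mathcal{E}_x(t)\leq\mathcal{E}_x(t^-)$, and strictly less by at least $1$ whenever $\mathrm{sgn}(U_x(t)-u) \neq \mathrm{sgn}(U_x(t^-)-u)$. One breaks into cases by the triple $(n_+,n_0,n_-)$ counting how many neighbors lie above, at, or below $u_x$, and by the sign transition of $x$. The weights $w$ are tuned precisely so that even the potentially problematic configuration $(n_+,n_0,n_-)=(1,1,1)$ yields the strict drop $\mathcal{E}_x: 3\to 2$ under a $\pm\to 0$ flip; in the extreme case $+\to -$, at least two neighbors lie on the side to which $x$ moves, giving a strict drop of at least $2$. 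This step uses the degree-$3$ median rule essentially, matching the warning after Lemma~\ref{lem: no_degree_one}.

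Because $\mathcal{E}_x$ depends on the vertex-specific threshold $u_x$, the identity $m_t(x,x)=\sum_{y\in\partial x} m_t(y,x)$ used in Lemma~\ref{lem: no_degree_one} does not apply as stated. I would instead work with the symmetrized energy
\[
\bar{\mathcal{E}}_x(t) := \mathcal{E}_x(t) + \sum_{y \in \partial x} w\!\left(\mathrm{sgn}(U_y(t) - u_y),\, \mathrm{sgn}(U_x(t) - u_y)\right),
\]
in which each edge $\{x,y\}$ contributes identically to $\bar{\mathcal{E}}_x$ and $\bar{\mathcal{E}}_y$. The mass transport argument of Lemma~\ref{lem: no_degree_one} then gives $\mathbb{E}[\bar{\mathcal{E}}_x(t)-\bar{\mathcal{E}}_x(0)]=2\mathbb{E}[m_t(x,x)]$, which is bounded in absolute value since $\bar{\mathcal{E}}_x$ is bounded. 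Combined with the extended Step~1 (monotonicity for $\bar{\mathcal{E}}_x$ and strict decrease per sign-changing flip of $x$), monotone convergence forces $m_\infty(x,x)$ to be a.s.\ finite, so $x$ has only finitely many sign-changing flips a.s., and $\mathrm{sgn}(U_x(t)-u)$ is eventually constant.

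The main technical hurdle is extending Step~1 to the symmetrized energy $\bar{\mathcal{E}}_x$: the added terms involve the neighbor thresholds $u_y$ rather than $u_x$, and the median rule controlling $x$'s flip is only directly compatible with $u_x$, so there is no a priori reason the $u_y$-referenced contributions cannot increase. I expect to handle this by restricting attention to times past the (a.s.~finite) $T_x$ from Lemma~\ref{lem: agree_with_neighbor}, after which every flip of $x$ is constrained (the pre-flip value equals one of the three current neighbor values and the post-flip value is the median), and then performing a further finite case analysis on the sorted ordering of $(u_{y_1}, u_{y_2}, u_{y_3})$ to bound the $u_y$-referenced changes in terms of the strict decrease gained from the $u_x$-referenced terms.
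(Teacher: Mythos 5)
Your plan correctly identifies the central difficulty — the energy you first write down references the vertex-specific threshold $u_x = U_x(\infty)$ and so is not edge-symmetric, which blocks the Nanda--Newman--Stein identity $m_t(x,x)=\sum_{y\neq x} m_t(y,x)$. But the fix you propose does not resolve it, and the paper handles it by a different, cleaner device.

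The paper's energy for the edge $\{x,y\}$ is the $\{0,1\}$-valued indicator of the event
\[
|\mathcal{A}_x| = \infty,\quad U_x(\infty)=U_y(\infty),\quad \text{and}\quad
\mathbf{1}_{\{U_x(t)\ge U_x(\infty)\}} \neq \mathbf{1}_{\{U_y(t)\ge U_y(\infty)\}}.
\]
Because the energy is nonzero only on agreement edges, the two thresholds $U_x(\infty)$ and $U_y(\infty)$ coincide there, so edge-symmetry is automatic; there is nothing to symmetrize. Moreover, the restriction to $|\mathcal{A}_x|=\infty$ costs nothing: on the event $B$ that the sign oscillates infinitely often, $x$ flips infinitely often, hence $\mathcal{A}_x$ is a.s.\ infinite by Proposition~\ref{prop: infinite_equivalence}. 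The proof then invokes Lemma~\ref{lem: no_degree_one} to classify $\mathrm{deg}_{\mathcal{A}_x}(x)\in\{2,3\}$ and checks, by a short case analysis over these two values, that flips of $x$ never increase its agreement-edge energy and strictly decrease it at each sign change (Case II), while in Case I the event $B$ is outright impossible. Your symmetrized energy $\bar{\mathcal{E}}_x$, by contrast, carries terms $w(\mathrm{sgn}(U_y(t)-u_y),\, \mathrm{sgn}(U_x(t)-u_y))$ that refer to the neighbor thresholds $u_y$; when $x$'s clock rings, the median rule controls $\mathrm{sgn}(U_x(t)-u_x)$ but says nothing about $\mathrm{sgn}(U_x(t)-u_y)$ for $y$ with $u_y\neq u_x$, so there is no reason these terms cannot increase and eat up the strict decrease you gained from the $u_x$-referenced part. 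The "further finite case analysis on the sorted ordering of $(u_{y_1},u_{y_2},u_{y_3})$" that you defer to is exactly the unresolved step; I do not see how to close it without in effect discovering the paper's restriction to agreement edges, which makes those foreign thresholds disappear.

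A secondary point: your weight $w(a,b)=|a-b|$ and the verification that the median of three signs uniquely minimizes $\sum_y|s-s_y|$ over $s\in\{-1,0,1\}$ is correct as far as it goes, so Step~1 does hold for $\mathcal{E}_x$. The difficulty is entirely in the passage to the mass-transport step, where symmetry is needed. If you restrict your energy from the start to agreement edges with $|\mathcal{A}_x|=\infty$ (and use Proposition~\ref{prop: infinite_equivalence} to justify that this is the only relevant case, plus Lemma~\ref{lem: no_degree_one} and Lemma~\ref{lem: agree_with_neighbor} to organize the cases), your approach converges to the paper's and the gap closes.
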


\begin{proof}
The lemma will follow if we show that functions $\mathbf{1}_{\{ U_x(t)  \ge  U_x(\infty) \} }$ and $ \mathbf{1}_{ \{U_x(t)  \le  U_x(\infty)\}  }$ have a.s.\ limits as $t \to \infty$.  It suffices to show the existence of an a.s.\ limit for the first function, as the proof for the second one is identical. 

We again employ a version of the Nanda-Newman-Stein argument.  
For an edge $\{x, y\}$,  define its energy at time $t$ as $E_{\{x, y\}}(t) = \mathbf{1}_{A_t}$ where the event $A_t$ is given by
\[A_t = \big \{  |\mathcal{A}_x| = \infty,  \   \ U_x(\infty) = U_y(\infty) \ \text{ and }  \mathbf{1}_{ \{ U_x(t)  \ge  U_x(\infty)  \} } \ne \mathbf{1}_{ \{ U_y(t)  \ge U_y(\infty) \} } \big \}. \]
Note that the definition of the event $A_t$ is symmetric in $x$ and $y$ as $U_x(\infty) = U_y(\infty)$ implies $\mathcal{A}_x =  \mathcal{A}_y$. 
The energy of a vertex $x$ at time $t$ is now defined as
\[ E_x(t) = \sum_{y \in \mathcal{V}:  y \sim x} E_{\{x, y\}}(t).\]
$E_{\{x, y\}}(t)$ and $E_x(t)$ may change due to a clock ring at $x$ or any of its neighbors. Let us introduce the notations $\Delta_t E_{\{x, y\}}$ and $\Delta_t E_x$ to denote these changes.
\[ \Delta_t E_{\{x, y\}}  = E_{\{x, y\}}(t) -  E_{\{x, y\}}(t^-)  \ \  \text{ and } \Delta_t E_x = E_x(t) - E_x(t^-) =  \sum_{y \in \mathcal{V}:  y \sim x} \Delta_t E_{\{x, y\}}. \ \  \]
 Let $m_t(x, y)$ be the total contribution to $E_x(t) - E_x(0)$ induced by the rings at $y$ during time $(0, t]$, i.e., 
\[ m_t(x, y) = \sum_{s \le t: \text{ $y$ has a ring at time } s }  \Delta_s E_x.\]
Similar to how we argued in the proof of Lemma~\ref{lem: no_degree_one} for equation~\eqref{eq: mass_identity}, using the invariance of the process, we obtain
\begin{equation}\label{eq:NNS_inv}
 \E[ E_x(t) - E_x(0) ] = 2 \E[ m_t(x, x)].
 \end{equation}

Next we make two important claims about how the energy of the vertex $x$ evolves with time.

\noindent {\bf Claim 1.} A flip at $x$ cannot strictly increase the energy of $x$. That is, if $U_x(t) \ne U_x(t^-)$, then $\Delta_t E_x \le 0$.

\noindent {\bf Claim 2.}  Let $B$ be the event  on which  the function $\mathbf{1}_{\{ U_x(t)  \ge  U_x(\infty) \} }$  changes its value infinitely often as $t \to \infty$.  Then, a.s.\ $\Delta_t E_x  <0 $ infinitely often on $B$.

Assuming the above claims, it is easy to conclude the proof of Lemma~\ref{l:NNS_finite_osc}. Note that Claim 1 implies that $t \mapsto - m_t(x,x) \ge 0$ is non-decreasing.  It follows from Claim 2 that $ - m_t(x,x) \uparrow \infty$ a.s.~on $B$ as $t \to \infty$. Therefore, if  $\prob(B)>0$, then 
by the monotone convergence theorem, we obtain $ \E[ m_t(x, x) ]  \downarrow  - \infty$. Since $E_x(t) \in \{0,1,2,3\}$, it follows from \eqref{eq:NNS_inv} that $ \E[ m_t(x, x)] \le \tfrac32$ for all $t$, which would be a contradiction. Hence, $B$ must have zero probability, as desired.

We will prove Claims 1 and 2 simultaneously. We assume that $|\mathcal{A}_x | = \infty$. This is because if $\mathcal{A}_x $ is finite,  then $E_x (t) \equiv 0$ and Claim 1 trivially follows. Also, on the event $B$,  $x$ must flip infinitely often and hence by Proposition~\ref{prop: infinite_equivalence}, a.s.~$\mathcal{A}_x$ must be infinite. If $\mathcal{A}_x$ is infinite, then by Lemma~\ref{lem: no_degree_one}, $\mathrm{deg}_{ \mathcal{A}_x}(x)$, the degree of $x$ in the agreement graph, can not be one. So, $\mathrm{deg}_{ \mathcal{A}_x}(x)$ is either $2$ or $3$. We will handle these two cases separately.

\noindent \textbf{Case I:} $\mathrm{deg}_{ \mathcal{A}_x}(x) =2.$

Let $y_1, y_2, z$ be three neighbors of $x$ such that $y_1, y_2 \in \mathcal{A}_x$ and $z \not \in \mathcal{A}_x$.   Denote $u = U_x(\infty) = U_{y_1}(\infty) =  U_{y_2}(\infty)$. Suppose the vertex $x$ flips at time $t$, and that the energy of one of the two agreement edges incident to $x$, say  $\{x, y_1\}$,  increases from $0$ to $1$ at $t$ due to that flip. For definiteness,  assume that $U_{y_1}(t^-) = U_{y_1}(t)  \ge u$ (the other case is similar). 
Since $E_{\{x, y_1\}}(t^-) = 0$ and $E_{\{x, y_1\}}(t) =1$, we must have $U_x(t^-) \ge u$ and $U_x(t) < u$. That means at time $t^-$,  both $y_2$ and $z$ should satisfy $U_{y_2}(t^-) = U_{y_2}(t) < u$, and  $U_z(t^-) = U_z(t) < u.$
Consequently, 
\[ E_{\{x, y_2\}}(t^-)  =1, \ \   E_{\{x, y_2\}}(t) = 0, \ \ \text{ and }  \ \ \Delta_t E_{\{x, y_2\}}  = -1.\]
The vertex $z$, not being in the agreement cluster of $x$,  does not contribute to the energy of $x$. Therefore,  if $x$ has a flip at time $t$ and $\Delta_t E_{\{x, y_1\}} =1$, then 
$\Delta_t E_x =  \Delta_t E_{\{x, y_1\}} + \Delta_t E_{\{x, y_2\}} = 1  -1 = 0$. So,  $\Delta_t E_x \le 0$ always in the case. 

Next we are going to argue that on the event $B$, $\mathrm{deg}_{ \mathcal{A}_x}(x) \ne 2$, so Claim 2 vacuously holds in this case.  Since $z \not \in \mathcal{A}_x$,   $U_z(\infty) \ne u$. Assume, without loss of generality, that  $U_z(\infty) >  u$.
It follows  that $U_z(t) >   U_x(t)$  for all large $t$. Thus by Lemma~\ref{lem: agree_with_neighbor}, $U_x(t) \in \{ U_{y_1}(t), U_{y_2}(t)\}$  for all large $t$. On the event $B$, we can find arbitrarily large times $t$ at which $x$ flips from a value  at or above $u$ to a value below $u$, i.e., $U_x(t^-) \ge u$ and $U_x(t)<  u$. But at time $t^-$, one of the vertices from $\{y_1, y_2\}$ and the vertex $z$ have values at least $u$. So, the updated value $U_x(t)$  of $x$, can not go below $u$, which is a contradiction.

\noindent \textbf{Case II:} $\mathrm{deg}_{ \mathcal{A}_x}(x) =3.$

Let $y_1, y_2, y_3 \in \mathcal{A}_x $ be the three neighbors of $x$ and denote  $u = U_x(\infty) = U_{y_i}(\infty) $ for $i=1, 2, 3$.  Suppose the vertex $x$ flips at time $t$.  If either both $U_x(t^-)$ and $U_x(t)$ are  $\ge u$ or both are $< u$, the energy of each edge $\{x, y_i\}$ remains unchanged and hence $\Delta_t E_x  = 0$.  Now suppose that  $U_x(t^-) \ge u$ and $U_x(t) < u$. Then there must be at least two neighbors, say $y_2, y_3$ whose values at $t$ are $< u$.  Then $\Delta_t E_{\{x, y_2\}} = \Delta_t E_{\{x, y_3\}} = -1$ and
\[ \Delta_t E_x   = \sum_{i=1}^3 \Delta_t E_{\{x, y_i\}}  \le -1.\]
The same conclusion can be reached if $U_x(t^-) < u$ and $U_x(t) \ge u$. So, we observe that if $x$ flips at time $t$, then $ \Delta_t E_x  \le -1$ and  $\Delta_t E_x  = 0$ depending on whether $1_{ \{ U_x(t)  \ge  u  \} } $ changes its value at $t$ or not. It establishes  both Claims 1 and 2 in this case and thereby finishing the proof of the lemma.
\end{proof}

\subsection{Proof of Theorem~\ref{thm: finite_agreement_clusters}}\label{sec: agreement_finite}

\subsubsection{Geometry of infinite agreement cluster} 

If  $\mathcal{A}_o$ is infinite, then  by Proposition~\ref{prop: infinite_equivalence}, almost surely, every vertex in $\mathcal{A}_o$ flips infinitely many times.  Therefore, as a consequence of Lemma~\ref{l:NNS_finite_osc}, any vertex $v \in \mathcal{A}_o$ satisfies either $U_v(t) < U_v(\infty)$ for all large times $t$ OR $U_v(t) > U_v(\infty)$ for all large time $t$. (The reason is that $\text{sgn}(U_v(t)-U_v(\infty))$ is otherwise eventually 0, and this means that $v$ stops flipping.) Depending on these two cases, we call a vertex $v$ (in some infinite agreement cluster)   as type $<$ or type $>$ respectively. When $\mathcal{A}_o$ is infinite, it can thus be partitioned  into subgraphs $\mathcal{A}_o^{<}$ and $\mathcal{A}_o^{>}$, where $\mathcal{A}_o^{<}$ (resp. $\mathcal{A}_o^{>}$) is generated by all type $<$ (resp. type $>$) vertices of $\mathcal{A}_o$. If $\mathcal{A}_o$ is infinite with positive probability, then either $\mathcal{A}_o^{<}$ or $\mathcal{A}_o^{>}$ is non-empty with positive probability. Without loss of generality, let us assume that $\mathcal{A}_o^{<} \ne \emptyset$ with positive probability. 

We now claim that if $\mathcal{A}_o^{<}$  is non-empty, then every vertex in $\mathcal{A}_o^{<}$  has at least two neighbors in $\mathcal{A}_o^{<}$. Note that if $x \in \mathcal{A}_o^{<}$ and $y$ is a neighbor of $x$  such that $ y \not \in \mathcal{A}_o^{<}$, then $U_x(t) \ne U_y(t)$ for all large $t$. By Lemma~\ref{lem: agree_with_neighbor},  $U_x(t)$ must share the spin of at least one of the neighbors of $x$  at all large times $t$.  So, at least one neighbor of $x$ must be in $\mathcal{A}_o^{<}$. If $x$ has exactly one neighbor $y \in \mathcal{A}_o^{<}$, then $U_x(t) = U_y(t)$ for all large times $t$ and this forbids $x$ from flipping infinitely many times, contradicting the beginning assumption that $\mathcal{A}_o = \mathcal{A}_x$ is infinite.

The union of the subgraphs $\mathcal{A}_x^{<}$ for $x \in \mathcal{V}$  induces an invariant bond percolation on $\mathbb{T}_3$. From the discussion above, each (non-empty) open component $\mathcal{A}_x^{<}$ of this percolation contains at least one doubly infinite path. We deduce that (see \cite[Theorem~1.2]{Haggstrom3})  almost surely, either 
\begin{itemize}
\item[(i)] $\mathcal{A}_o^{<}$  has two ends, i.e., $\mathcal{A}_o^{<}$  is a single doubly infinite path, OR,  
\item[(ii)] $\mathcal{A}_o^{<}$  has infinitely many ends. 
\end{itemize}
In the next subsection we rule out the first possibility above.

\subsection{Ruling out the case when $\mathcal{A}_o^{<}$ has two ends}\label{sec: two_ends}
In this section we will assume that with positive probability,  $\mathcal{A}_o^{<}$ is a single doubly infinite path and arrive at a contradiction. Without loss of generality, we can assume that $o \in \mathcal{A}_o^{<}$. 

\noindent \textbf{Claim 1.} If $x \in \mathcal{A}_o^{<}$ and $y$ is a neighbor of $x$ with  $y \not \in \mathcal{A}_o^{<}$, then  $U_x(t) < U_y(t)$ for all large $t$.  

If possible, suppose that $U_x(\infty) > U_y(\infty)$. Let $x_1$ and $x_2$ be the two neighbors of $x$ in $\mathcal{A}_o^{<}$. Therefore, by Lemma~\ref{lem: agree_with_neighbor}, $U_x(t) \in \{ U_{x_1} (t), U_{x_2} (t) \}$ for all large $t$.  Since $U_x(\infty) > U_y(\infty)$,  we have $U_x(t) > U_y(t)$ and $U_{x_i}(t) > U_y(t), i = 1, 2$ for all large $t$.   Moreover, if $x$ flips at a large time $t$, then $U_x(t) = \min (U_{x_1} (t^-), U_{x_2} (t^-)) = \mathrm{median}\{ U_{x_1} (t^-), U_{x_2} (t^-), U_y(t^-)\}$. So, every flip of $x$ after a sufficiently large time results in a decrease in the value of the spin at $x$. This is in contraction with the facts that $U_x(t) \to U_x(\infty)$ and $U_x(t) < U_x(\infty)$ for all large $t$. So,  we must have  $U_x(\infty) \le U_y(\infty)$.  If $U_x(\infty) =  U_y(\infty)$, then  $y \in \mathcal{A}_o^{>}$. Otherwise, $U_x(\infty) < U_y(\infty)$. In both cases, the claim is obvious. 

Deleting a vertex $x$  splits $\mathbb{T}_3$  into three disjoint infinite binary subtrees $(\mathbb{T}_{y \to x}: y \in \partial x)$.  Let us denote the three neighbors of $o$ by $x_{-1}, x_{+1}, $ and $x_0$. Then the following event has a positive probability: 
\begin{equation}  \label{eq:descrip_event1}
\begin{gathered}
\text{ $\mathcal{A}_o^{<}$ is a single doubly infinite path  passing through $o$  in the subtree  $\mathbb{T}_{o \to  x_0}$}  \\
 \text{ and contains no vertex from $\mathbb{T}_{x_0 \to  o}$. }
\end{gathered}
\end{equation}
Let $\eta = (\eta_x)_{x \in \mathbb{T}_3} $ denote all the randomness in the median process including both the initial (continuous) spins and the clocks.  We write $\eta = (\eta_1, \eta_2)$ where $\eta_1 =  (\eta_x)_{x \in \mathbb{T}_{o \to  x_0}}$ and $\eta_2 =  (\eta_x)_{x \in \mathbb{T}_{x_0 \to  o}}$. 
Let $\eta' = (\eta_1, \eta_2')$ be obtained from $\eta$ by independently resampling the initial spins and the clocks for the vertices in $\mathbb{T}_{x_0 \to  o}$, while keeping them unchanged for the vertices in $\mathbb{T}_{o \to  x_0}$. Clearly, $\eta'$ shares the same law as $\eta$. Let $\Omega \ni \eta$ be the original sample space for $\eta$ and $\tilde \Omega \in \tilde \eta := (\eta_1, \eta_2, \eta_2')$ be the enlarged sample space containing  the extra randomness $\eta_2'$. 
Let us denote $Z(\eta) = U_\infty(o)(\eta) \in [p_c, 1-p_c]$. Let $\sigma^Z$ be the $\{ -1, +1\}^{\mathcal{V}}$-valued process on $\Omega$ obtained by thresholding the median process $U$  at $Z$,  i.e.,  the discrete spin $\sigma^Z_v(t)$ of a vertex $v$ at time $t$ is given by 
\[ \sigma^Z_v(t)(\eta) = +1 \  \text{ if  \ \ } U_t(v)(\eta) \le Z(\eta)  \quad \text{ and } \quad  \sigma^Z_v(t)(\eta) = -1 \text{  \ \ otherwise}. \]
Note that $\sigma^Z$ follows the majority dynamics with initial discrete spins $\sigma^Z_v(0), v \in \mathbb{T}_3$. 
Almost surely in $\sigma^Z$, the discrete spin of every vertex $v$ only flips finitely often and hence,  $\sigma^Z_v(\infty)   =  \lim_{t \to \infty} \sigma^Z_v(t)$ exists a.s.\ for each vertex $v$. This follows directly from Lemma~\ref{l:NNS_finite_osc} if $U_v(\infty) = Z$, while for  $U_v(\infty) \ne Z$,  $U_v(t)$ can not oscillate around $Z$ infinitely many times because $U_v(t) \to U_v(\infty)$. So, from \eqref{eq:descrip_event1} and Claim 1, with positive probability, 
\begin{equation}\label{eq:descrip_event_2}
\begin{gathered}
 \text{the limiting configuration $(\sigma^Z_v(\infty))_{v \in \mathbb{T}_3}$ contains  a single doubly infinite path of $+1$ spins  }  \\
 \text{  passing through $o$  in the subtree  $\mathbb{T}_{o \to  x_0}$  and $\sigma^Z_{x_0}(\infty)  = - 1$.}
\end{gathered}
\end{equation}
Let us denote the above event by $\mathcal{B} \subseteq \Omega$. 
Now we run a median process  corresponding to randomness $\eta'$ and threshold it at $Z = Z(\eta)$ to obtain a `perturbed' majority dynamics $\hat \sigma^Z$  on the enlarged probability space $\tilde \Omega$:
\[ \hat \sigma^Z_v(t)(\tilde \eta) = +1 \  \text{ if  \ \ } U_t(v)(\eta') \le Z(\eta)  \quad \text{ and } \quad  \sigma^Z_v(t)(\tilde \eta) = -1 \text{  \ \ otherwise}. \]
We are going to show that

\noindent \textbf{Claim 2.}  There exists $\mathcal{B}_0 \subseteq \mathcal{B} \subseteq \Omega $ such that $\mathcal{B}_0$ has positive $\eta$-probability and for a.s.\ every $\eta \in \mathcal{B}_0$,  with positive $\eta_2'$-probability, 
\begin{equation} \label{eq:descrip_event_3}
\begin{gathered}
 \sigma^Z_v(t)(\eta) = \hat \sigma^Z_v(t)(\tilde \eta) \text{  for all $t$ and  for $v \in \mathbb{T}_{o \to  x_0}$  } \\
\text{ and $ \hat \sigma^Z_{x_0}(t)(\tilde \eta) = +1 $ for all large $t$.}
\end{gathered}
\end{equation}
We will later arrive at a contradiction by showing that structures of this kind are forbidden due to invariance. \\

\noindent \textbf{Proof of Claim 2.}  Suppose that the clock at $o$ rings at times $0< r_1 < r_2< \ldots $  in $\eta$ (and hence in $\tilde \eta$ as well). Note that the processes  $\sigma^Z(\eta)$ and  $\hat \sigma^Z(\tilde \eta)$  will remain identical on $\mathbb{T}_{o \to  x_0}$ if  $\sigma^Z_{x_o}(r_i) (\eta) =  \hat \sigma^Z_{x_o}(r_i) (\tilde \eta)$ for each $i$; that is,  if the spin of the boundary vertex $x_o$ is the same in both processes whenever the clock of $o$ rings.

First,  by considering a subevent  of $\mathcal{B}$ with positive probability if necessary, we can assume that  there exists a real number $q \in (0, 1)$  such that 
\[ q \le Z  \text{ on }   \mathcal{B} \quad \text{ and } \quad \theta(q) > 0.\]
 Indeed, if  $Z = p_c$ a.s.\ on $\mathcal{B}$, then we take $q = p_c$. Since $U_o(t) \le Z=p_c$ for all large $t$ on $\mathcal{B}$, it follows that $\theta(p_c) = \lim_{ t \to \infty} \prob( U_o(t) \le p_c) \ge \prob( \mathcal{B})   >0$. Else, suppose that $Z > p_c$ occurs with positive probability on $\mathcal{B}$. Then we can find $q > p_c$ such that  $q < Z$  occurs with a  positive probability on some sub-event of $\mathcal{B}$. Since $q > p_c$, $\theta(q) > 0$ by definition. 

Let  $\mathcal{B}_0$ be the following sub-event of  $\mathcal{B}$, which occurs with positive probability. 
\begin{enumerate}
\item[(I)] after a fixed large time $T$, the $\sigma^Z$ spins of three vertices $o, x_{-1}, x_{+1}$  never flip and remain at $+1$ forever. 
\item[(ii)]  the clock of $o$ rings exactly $k$ times before $T$ and the $i$th ring occurs in the time interval   $[s_i, t_i], 1 \le i \le k$ where $0 < s_1 < t_1 < \cdots < s_k < t_k <  s_{k+1} := T$ are given deterministic numbers,  
\item[(iii)]  the clock of $x_0$ does not ring (in $\eta_2)$ during each of $k$ time intervals $[s_i, t_i],$ and  $\sigma^Z_{x_0}(t)(\eta) = a_i$ for all $t \in [s_i, t_i]$ for some given deterministic $a_1, a_2, \ldots, a_k \in \{-1, 1\}$.  
\end{enumerate}

In the remainder of the proof, we will argue that  if $\eta \in \mathcal{B}_0$, then with positive $\eta_2'$-probability, irrespective of (a priori potentially unfavorable) $\hat \sigma^Z$ dynamics on $\mathbb{T}_{o \to x_0}$, 
 we can ensure that 
 \begin{equation}\label{eq:descrip_event_4}
  \hat \sigma^Z_{x_0}(t)(\tilde \eta) = a_i \text{  for all } t \in [s_i, t_i]  \ \ \text{ and }  \ \  \hat \sigma^Z_{x_0}(t)(\tilde \eta)  = +1 \text{  for all } t \ge T.
  \end{equation}
 From the discussion in the beginning of the proof of Claim 2, the above condition will then guarantee \eqref{eq:descrip_event_3}. 
 
Consider the binary tree $\mathbb{T}_{x_0 \to o}$ rooted at $x_0$. For any vertex $v \ne x_0$ in $\mathbb{T}_{x_0 \to o}$, let  $v^{-}$ be the parent of $v$, i.e., the unique vertex in $\partial v$ closest to $x_0$. Let $L_i$ be the set of vertices of $\mathbb{T}_{x_0 \to o}$ at distance $i$ from $x_0$ and set $L = \cup_{i=0}^{k} L_i$. 

By choosing $\eta_2'$ suitably, we can set the initial discrete spin of each vertex $v_i \in L_i$ to satisfy $\hat \sigma^Z_{v_i}(0)(\tilde \eta)   =a_i$   for $i=0, 1, \ldots, k+1$, where $a_{k+1}=+1$. Indeed, this can be achieved by taking the initial continuous spin at a vertex $v_i \in L_i$ to satisfy either $U_v(0)(\eta_2') < p_c$ or  $U_v(0)(\eta_2') > 1-p_c$, depending on whether $ a_i = 1$ or $-1$ respectively. Of course, this can be done with positive $\eta_2'$-probability. 
By Lemma~\ref{lem:never_flip}, for each $v_{k+1} \in L_{k+1}$, we can further choose (in $\eta_2')$ the initial values and clocks of the vertices in the binary trees $\mathbb{T}_{v_{k+1} \to v_{k+1}^-}$  in such a way that
with positive $\eta_2'$-probability, 
\[ U_{v_{k+1}}(t)(\eta_2') < q \text{  for all } t.\]
 (Note that Lemma~\ref{lem:never_flip} gives $\leq q$, but since $q$ is fixed, there is zero probability that any spin equals $q$ at any finite time.) Therefore,  the discrete spins of the vertices in $L_{k+1}$ remain frozen at $+1$  at all time in $\hat \sigma^Z$.   

We now arrange the clocks of the vertices in $L$ in a suitable way to ensure the correct boundary condition \eqref{eq:descrip_event_4} at the vertex $x_0$. No clock rings for any vertex in $L$ within each of the time intervals $[s_i, t_i]$. Also, there is no clock ring in $L$ before time $s_1$. Within each of the $k$ intervals $(t_i, s_{i+1})$ for $i = 1, 2, \ldots, k$, first the vertices in $L_k$ ring once (in some order), then the vertices in $L_{k-1}$ ring once, and so on, all the way up to the root $x_0$. Since the updates in $\hat \sigma^Z$ follow the majority rule and each vertex in $L_i$ has two neighbors from $L_{i+1}$, every such cycle of clock rings during the interval $(t_i, s_{i+1})$  has the effect of pushing all the spins  in $L$ by one level up (and popping out the current spin of $x_0$). 
This will ensure that we have the right boundary condition at $x_0$ at each interval $[s_i, t_i]$ for $1 \le i \le k$. Upon completion of the final cycle in $(t_k, T)$, all vertices in $L$ in $\hat \sigma^Z$  permanently become $+1$, satisfying the condition $ \hat \sigma^Z_{x_0}(t)(\tilde \eta)  = +1 \text{  for all } t \ge T$. This special arrangement of clocks of vertices in $L$ can be made with positive $\eta'$-probability.  This concludes the proof of Claim 2. 

\begin{lem}\label{lem:single_path_invariance}
The following event $\mathcal{C}$ concerning  the median process on $\mathbb{T}_3$ has zero probability.  There exists a random variable $Z$ (defined on an enlarged probability space)  with the properties 
\begin{itemize}
\item[(i)] there exists a unique doubly infinite path $S$, passing through $o$  in the subtree  $\mathbb{T}_{o \to  x_0}$, such that every vertex $v$ on that path satisfies $U_v(t) \le Z$ for all large $t$.
\item[(ii)]  each vertex $u$ in $ \partial S \cap \mathbb{T}_{o \to  x_0}$  satisfies $U_u(t) > Z$ for large $t$, where $\partial S$ denotes the (external) vertex boundary of the path $S$.
\item[(iii)] $U_{x_0}(t) \le Z$ for all large $t$.
\end{itemize}
\end{lem}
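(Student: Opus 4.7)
The plan is to prove Lemma~\ref{lem:single_path_invariance} by a mass transport argument centered on a uniqueness property of the ``base'' of the doubly-infinite path $S$. Suppose for contradiction that $\mathbb{P}(\mathcal{C})>0$. For each $v\in\mathcal{V}$, let $\tilde{\mathcal{C}}_v$ denote the event obtained from $\mathcal{C}$ by replacing $o$ by $v$ and allowing the pendant to be any neighbor $x_0\in\partial v$; by tree-automorphism invariance of the median process, $\mathbb{P}(\tilde{\mathcal{C}}_v)\ge\mathbb{P}(\mathcal{C})>0$ for every $v$. On $\tilde{\mathcal{C}}_v$, for each valid pendant $x_0$ write $S_v^{x_0}$ for the corresponding unique doubly-infinite low-spin path through $v$ in $\mathbb{T}_{v\to x_0}$ and $Z_v^{x_0}$ for an accompanying threshold witnessing (i)--(iii). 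The goal is to transport mass from each vertex $w$ on some $S_v^{x_0}$ to the base $v$ and to observe that the out-mass at each $w$ is bounded while the in-mass at each base is infinite, contradicting the mass transport principle.

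The key structural claim is that two distinct bases cannot share a common path. Suppose $\tilde{\mathcal{C}}_v$ and $\tilde{\mathcal{C}}_{v'}$ both hold with respective pendants $x_0^v,x_0^{v'}$, thresholds $Z_v,Z_{v'}$, and a common path $S_v^{x_0^v}=S_{v'}^{x_0^{v'}}=:S$, while $v\ne v'$. Since $x_0^v$ is adjacent to $v\in S$, does not lie on $S$, and cannot equal $x_0^{v'}$ (which is $v'$'s own non-$S$ neighbor), one checks that $x_0^v$ lies in the subtree $\mathbb{T}_{v'\to x_0^{v'}}$ and is a boundary vertex of $S$ at $v$ in that subtree. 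Condition (ii) applied at $v'$ then forces $U_{x_0^v}(t)>Z_{v'}$ for all large $t$, while condition (iii) applied at $v$ gives $U_{x_0^v}(t)\le Z_v$ for all large $t$; together these yield the strict inequality $Z_{v'}<Z_v$. The symmetric argument applied to $x_0^{v'}$ yields $Z_v<Z_{v'}$, a contradiction. A consequence is that the assignment $(v,x_0)\mapsto S_v^{x_0}$ is injective on valid pairs, since distinct pendants at the same base $v$ produce paths that use different pairs of $v$'s neighbors.

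Next, set up the mass transport
\[
m(w,v) = \#\{x_0\in\partial v : \tilde{\mathcal{C}}_v \text{ holds with pendant } x_0 \text{ and } w\in S_v^{x_0}\},
\]
which is automorphism-invariant because the median process is. By the injectivity above, $\sum_v m(w,v)$ equals the number of distinct doubly-infinite paths through $w$ that arise as some $S_v^{x_0}$, and since $w$ has only three neighbors there are at most $\binom{3}{2}=3$ doubly-infinite paths through $w$ in $\mathbb{T}_3$; hence $\mathbb{E}\sum_v m(w,v)\le 3$. On the other hand, whenever $\tilde{\mathcal{C}}_v$ holds, at least one valid pendant yields an infinite path $S_v^{x_0}$, contributing $|S_v^{x_0}|=\infty$ to $\sum_w m(w,v)$. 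The mass transport principle then gives $3\ge\mathbb{E}\sum_w m(w,v)=\infty\cdot\mathbb{P}(\tilde{\mathcal{C}}_v)$, forcing $\mathbb{P}(\tilde{\mathcal{C}}_v)=0$ and hence $\mathbb{P}(\mathcal{C})=0$.

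The main obstacle will be the careful handling of the thresholds $Z_v,Z_{v'}$ in the uniqueness claim: these are random variables on an enlarged probability space, so a priori they could coincide, and the strict inequality $Z_{v'}<Z_v$ must be extracted from the simultaneous large-$t$ bounds $Z_{v'}<U_{x_0^v}(t)\le Z_v$. The strict inequality in condition (ii) is what makes this possible, and it is robust under the eventually-constant sign behavior of $U_u(t)-U_u(\infty)$ guaranteed by Lemma~\ref{l:NNS_finite_osc}. A secondary point is measurability of $\tilde{\mathcal{C}}_v$ and of the path $S_v^{x_0}$ as functions of the configuration, which follows from phrasing the existential quantifier over $Z$ using the measurable limits $U_u(\infty)$ and their eventual-sign data.
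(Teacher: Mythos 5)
Your structural claim (two distinct bases cannot share a path, via the strict ordering of thresholds) is correct and is a nice observation. The issue is in the mass transport step, specifically the out-mass bound.

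You assert that there are at most $\binom{3}{2}=3$ doubly-infinite paths through a fixed vertex $w$ in $\mathbb{T}_3$. That is false: a doubly-infinite path through $w$ is determined by a pair of distinct ends of the tree emanating from $w$, and there are uncountably many such ends; $\binom{3}{2}=3$ only counts the choices of which two of $w$'s neighbors the path uses. Consequently the injectivity of $(v,x_0)\mapsto S_v^{x_0}$ does not bound $\sum_v m(w,v)$ by $3$, and the argument for $\mathbb{E}\sum_v m(w,v)\le 3$ collapses. Pushing your structural analysis further does yield real constraints — if $S_{v_1}^{x_0^1}\ne S_{v_2}^{x_0^2}$ both pass through $w$, one can show the two paths have exactly one divergence vertex, which must be one of $v_1,v_2$, and the thresholds are then strictly ordered — but this falls short of a finite bound on the number of pairs $(v,x_0)$ with $w\in S_v^{x_0}$: there is no obvious obstruction to an infinite chain of bases $v_1,v_2,\dots$ with strictly decreasing thresholds $z_1>z_2>\cdots$, whose paths through $w$ agree on longer and longer segments before peeling off. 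Without a finite out-mass bound, the contradiction $3\ge\infty\cdot\mathbb{P}(\tilde{\mathcal{C}}_v)$ cannot be drawn.

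For comparison, the paper's proof circumvents this by never transporting mass along the paths $S_v^{x_0}$ with an existentially-quantified threshold. Instead it fixes the (given) $Z$, sets $V=U_o(\infty)$, and splits into cases according to whether $V=Z$ or $V<Z$ (and then on $U_{x_0}(\infty)$). In each case the base $o$ is characterized \emph{intrinsically} — e.g.\ as the unique degree-$3$ vertex of an explicit subgraph $G_x$ built from the median process, or as the unique vertex of $\mathcal{A}_x$ whose external neighbor achieves the infimum $\inf_{u'\in\partial\mathcal{A}_x}U_{u'}(\infty)$ — so that the mass transport rule is manifestly invariant and each vertex sends out at most one unit of mass. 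That careful intrinsic characterization is exactly what is missing from your argument. The reduction to the pointwise-quantified event $\tilde{\mathcal{C}}_v$ and the strict-threshold-ordering claim are both sound; the missing ingredient is an invariant, automorphism-equivariant rule that singles out at most a bounded number of bases per vertex, which is what the paper's case analysis supplies.
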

To apply Lemma~\ref{lem:single_path_invariance}, we consider the median process on $\mathbb{T}_3$ with randomness $(\eta_1, \eta_2')$ and let $Z(\tilde \eta) = U_o(\infty)(\eta)$, all defined on the enlarged probability space $\tilde \Omega$.
If $\mathcal{A}_o^{<}$ is a single doubly infinite path with positive probability, then by Claim 2, $\prob(\mathcal{C}) > 0$, contradicting Lemma~\ref{lem:single_path_invariance}. Therefore, $\mathcal{A}_o^{<}$ can not be a single doubly infinite path a.s., as promised.  
\begin{rem}
In the above lemma, the process $(1_{ \{U_v(\cdot) \le Z\} })_{v \in \mathbb{T}_3}$ may not be invariant on $\mathbb{T}_3$. Had it been the case, the proof of the lemma would have followed easily using standard results for invariant percolation on trees. 
\end{rem}

\begin{proof}[Proof of Lemma~\ref{lem:single_path_invariance}]   
Observe that $U_{u}(\infty) = U_{v}(\infty)$ for any $u, v \in S$ on $\mathcal{C}$. If not, suppose $U_{v_1}(\infty) < U_{v_2}(\infty)$ for a pair of adjacent vertices $v_1$ and $v_2$ on $S$ and assume that $v_1 \ne o$. Let $v_1' \in \partial S$ be the neighbor of $v_1$.  Then $U_{x}(t) > U_{v_1}(t)$ for large $t$ for both $x = v_1'$ and $x = v_2$. This contradicts the median update at $v_1$ at any clock ring of $v_1$ after a large time. So, we deduce  that all the vertices on the same side of $o$ (perhaps not including $o$) share the same limiting spin value. Now fix a neighbor $x_1 \in \partial o \cap  S$ of $o$ and suppose $U_{x_1}(\infty) \ne U_o(\infty)$.   Let $x_2$ and $x_1'$ be the two other neighbors of $x_1$, on and off the path $S$.  From Lemma~\ref{lem: agree_with_neighbor}, we must have $U_{x_1}(t) = U_{x_2}(t)$ for large $t$ and consequently, $x_1$ cannot flip at a large time. This is a contradiction since 
the agreement cluster of $x_1$ is infinite on the given event and hence by Proposition~\ref{prop: infinite_equivalence}, $x_1$ has to flip infinitely often.

Now we proceed to prove the lemma by considering several cases. Let $V :=  U_o(\infty)$ be the common limiting spin of the vertices on the path $S$.

\noindent Case I :  $V  = Z$.  For any vertex $v$, let $G_v$ be the connected component of $v$ in the subgraph of $\mathbb{T}_3$ consisting of all vertices $u$ satisfying $U_u(t) \le U_v(\infty)$ for large $t$. Since vertices on $S$ share the same limiting spin, we have $G_{v_1} = G_{v_2}$ for any $v_1, v_2 \in S$. Furthermore, $v \in G_v$ for all $v \in S$ since $V = Z$. Note that for $v \in S$, $G_v \cap \mathbb{T}_{o\to x_0}$ is the exactly the doubly infinite path $S$, as any vertex $u$ in $\mathbb{T}_{o\to x_0} \cap \partial S$   satisfies  $U_u(t) > Z$ for large $t$ by (ii).
 Also,  since $U_{x_0}(t) \le Z=V$ for all large $t$, we deduce that $x_0 \in G_v$ for any $v \in S$. Consequently, each $v\in S$ has degree $2$ in $G_v$, except for $o$ which has degree $3$ in $G_o$. 
 
 We define a mass transport by sending  unit mass from $x$ to $y$ if $y$ is the unique vertex (if any) with degree $3$ in  $G_x$, and zero mass otherwise. Then the root $o$ receives an infinite mass in total (as all the vertices on the path $S$ will send  unit mass to $o$) on the event $\mathcal C  \cap \{V=Z\}$. So, if Case I happens with positive probability,  then the expected total mass received by $o$ is infinite, whereas the expected total mass sent out by $o$ is always bounded above by $1$. This contradicts the mass transport principle!

\noindent Case II : $V < Z$.  Then the agreement cluster $\mathcal{A}_{o}$ in $\mathbb{T}_{o \to x_0}$ is precisely the doubly infinite path $S$, since the limiting spin of every vertex in $\partial S \cap \mathbb{T}_{o \to x_0}$ is at least $Z$.  We are going to treat two cases separately depending on whether $U_{x_0}(\infty) = V$ or $U_{x_0}(\infty) \ne V$. 

\noindent Case II(a): $ U_{x_0}(\infty) = V$.   Then $x_0 \in \mathcal{A}_o$ and by  Lemma~\ref{lem: no_degree_one}, $x_0$ belongs to an infinite connected component of $\mathcal{A}_o$  in $\mathbb{T}_{x_0 \to o}$. Thus, $o$ is a triple point of $\mathcal{A}_o$ and hence, $\mathcal{A}_o$ has at least three ends. (Recall that  for a connected subgraph $G$ of $\mathbb{T}_3$, we say a vertex $x$ of $G$ is a triple point of $G$, if the removal of $x$ and its incident edges from $G$ splits it  into three infinite components.)

Moreover, $\mathcal{A}_o$ has at least  two isolated ends (coming from the doubly infinite path $S$). Since the agreement graph  is an invariant percolation on $\mathbb{T}_3$ and $\mathcal{A}_o$ is one of its connected components,  $\mathcal{A}_o$ must have  infinitely many ends (see  \cite[Theorem~1.2]{Haggstrom3}), in which case it can not have an isolated end a.s.\ (see \cite[Proposition~1.4]{Haggstrom3}).   So, Case II(a) cannot occur with positive probability. 

\noindent Case II(b): $U_{x_0}(\infty) \ne V$.   Then $\mathcal{A}_o$ consists of the doubly infinite path $S$.

Assume first that $U_{x_0}(\infty) < Z$.  Note that every vertex $v \in \partial S$, except for $x_0$, has limiting spin $U_v(\infty) \ge Z$. This makes $o$ the unique vertex on $S$ whose neighbor $x_0 \in \partial S$ has the limiting spin satisfying $ U_{x_0}(\infty)  =  \inf_{ v \in \partial S} U_v(\infty)$.  Construct a mass transport where we send  unit mass from $x$ to $y$ if $y \in \mathcal{A}_x$ and $y$ is the closest vertex to $x$ (if any) among the vertices $v \in \mathcal{A}_x$ with a neighbor $v' \in \partial \mathcal{A}_x$ satisfying $U_{v'}(\infty)  = \inf_{ u' \in \partial \mathcal{A}_x} U_{u'}(\infty)$. If the current case happens with positive probability, then the expected total mass received by $o$  is infinite, while the expected total mass out is bounded by $1$, leading to a contradiction.

It remains to consider the case $U_{x_0}(\infty) = Z$.  For any vertex $x$, let $I_x  = \inf_{ w \in \partial \mathcal{A}_x} U_w(\infty) $. Clearly, for any $v \in \mathcal{A}_o$, $I_v = \inf_{ w \in \partial S} U_w(\infty) =  U_{x_0}(\infty) = Z$. Note that by (iii), $U_{x_0}(t) \le Z = U_{x_0}(\infty) $ for all large $t$ 
On the other hand,  for any other vertex $v \in \partial S$,  $U_{v}(t) > Z  = U_{x_0}(\infty)  $ for large $t$ by (ii).

Now we define a mass transport by sending unit mass from $x$ to $y$ if $y \in \mathcal{A}_x$ and $y$ is the closest vertex to $x$ (if any) among the vertices  $v \in \mathcal{A}_x$ with the property that it has a neighbor $v' \in \partial \mathcal{A}_x$  such that $U_{v'}(t) < I_x$  for all large $t$. Assuming this scenario happens with positive probability,  we again arrive at the contradiction by the mass transport principle since the expected total mass received by $o$ will be infinite, whereas the expected total mass sent by $o$ will be bounded by $1$. 

\end{proof}

\subsection{Ruling out the case when $\mathcal{A}_o^{<}$ has infinitely many ends}

In this section, we assume that 
\begin{equation}\label{eq: infinitely_many_ends_assumption}
\mathbb{P}(\mathcal{A}_o^< \text{ has infinitely many ends})>0, 
\end{equation}
and  eventually obtain a contradiction. This will show that both possibilities listed above Section~\ref{sec: two_ends} are impossible, and therefore that a.s., $\mathcal{A}_o$ is finite. Therefore we can go back through Proposition~\ref{prop: infinite_equivalence} to finally prove Theorem~\ref{thm: continuous_theorem} and consequently Theorem~\ref{thm: continuity}.

To prove that $\mathcal{A}_o^<$ cannot have infinitely many ends, we use Lemma~\ref{cor: percolation}. For this, we need to define invariant bond percolation processes $\eta, (\eta_n)$ on $\mathbb{T}_3$ satisfying its hypotheses. The definition of $\eta$ will require a simple extension of the definition of $\mathcal{A}_o^<$: for any $x \in \mathcal{V}$, we write $\mathcal{A}_x^<$ for the subgraph induced by the vertices $y$ in $\mathcal{A}_x$ with $U_y(t) < U_y(\infty)$ for all large $t$. Note that if $y$ is a vertex of $\mathcal{A}_x$, then $\mathcal{A}_x^< = \mathcal{A}_y^<$. We then define $\eta \in \{0,1\}^{\mathcal{E}}$ by
\[
\eta(e) = \begin{cases}
1 &\quad\text{if } e = \{x,y\} \text{ and } e \text{ is an edge of } \mathcal{A}_x^< \\
0 & \quad\text{otherwise}.
\end{cases}
\]
Therefore the edges with $\eta$-value 1 are those whose endpoints are in the same agreement cluster and whose spin values are (for all large times) less than their limiting values. This $\eta$ is an invariant bond percolation process.

To define $\eta_n$, we will, for a given $n$, choose $t_n$ so large that 
\[
\mathbb{P}(U_o(t_n) < U_o(\infty) \mid o \text{ is a vertex of } \mathcal{A}_o^<) > 1-\frac{1}{n^2}.
\]
Next, choose $\delta_n>0$ so that
\begin{equation}\label{eq: delta_n_fix}
\mathbb{P}(U_o(t_n) < U_o(\infty)-\delta_n \mid o \text{ is a vertex of } \mathcal{A}_o^<) > 1-\frac{2}{n^2}.
\end{equation}
Then we define the invariant bond percolation process $\eta_n$ as
\[
\eta_n(e) = \begin{cases}
1 &\quad\text{if } e = \{x,y\} \text{ is an edge of } \mathcal{A}_x^< \\
& \qquad \text{and } \max\{U_x(t_m),U_y(t_m)\} < U_x(\infty)-\delta_m \text{ for all } m \geq n \\
0 & \quad \text{otherwise}.
\end{cases}
\]

Note that $\eta_n \leq \eta_{n+1} \leq \eta$ for all $n$ and using \eqref{eq: delta_n_fix}, one has for any $e = \{x,y\}$,
\begin{align*}
&\mathbb{P}(\eta_n(e)=0 \mid \eta(e) = 1) \\
\leq~& 2\sum_{m=n}^\infty \mathbb{P}(U_x(t_m) \geq U_x(\infty)-\delta_m \mid \eta(e) = 1) \\
\leq~& 2 \sum_{m=n}^\infty \mathbb{P}(U_x(t_m) \geq U_x(\infty)-\delta_m \mid x \text{ is a vertex of }  \mathcal{A}_x^<) \cdot \frac{\mathbb{P}(x \text{ is a vertex of } \mathcal{A}_x^<)}{\mathbb{P}(\eta(e)=1)} \\
\leq~& \frac{2\mathbb{P}(x \text{ is a vertex of } \mathcal{A}_x^<)}{\mathbb{P}(\eta(e) = 1)} \sum_{m=n}^\infty \frac{2}{m^2} \\
\leq~&  \frac{2\mathbb{P}(x \text{ is a vertex of } \mathcal{A}_x^<)}{\mathbb{P}(\eta(e)=1)} \cdot \frac{2}{n-1}.
\end{align*}
Therefore $\lim_{n\to\infty} \mathbb{P}(\eta_n(e)=1)= \mathbb{P}(\eta(e)=1)$ and we can invoke Lemma~\ref{cor: percolation} to conclude that if $\mathbb{P}(A_\eta(o),D_\eta(o))>0$, then
\begin{equation}\label{eq: limit_ends}
\lim_{n\to\infty} \mathbb{P}(D_{\eta_n}(o) \mid A_\eta(o),D_\eta(o)) =1,
\end{equation} 
where $A_{\eta}(o)$ is the event that $C_{\eta}(o)$, the $\eta$-open cluster of $o$, has at least three ends, and $D_\eta(o)$ is the event that $o$ is in a self-avoiding doubly-infinite $\eta$-open path (similarly for $D_{\eta_n}(o)$), as defined before Lemma~\ref{cor: percolation}. Note that because we assumed \eqref{eq: infinitely_many_ends_assumption}, the probability of $A_\eta(o)$ is positive. Furthermore, as we have noted, each vertex $x \in \mathcal{A}_x^<$ has degree at least two in $\mathcal{A}_x^<$, so $\mathbb{P}(D_\eta(o)\mid A_\eta(o))=1$, giving that $\mathbb{P}(A_\eta(o),D_\eta(o)) =\mathbb{P}(A_\eta(o)) >0$, and so \eqref{eq: limit_ends} holds.

For any outcome in $D_{\eta_n}(o)$, the root $o$ is in a doubly infinite path $P$ such that all vertices $x \in P$ satisfy $U_x(t_n) < U_x(\infty)-\delta_n$ and for all $x,y \in P$, one has $U_x(\infty) = U_y(\infty)$. By the definition of the median process, such vertices are ``stable'' in the sense that for all times larger than $t_n$, whenever they attempt to update their spins, they have at least two neighbors with spin values less than $U_x(\infty) - \delta_n$. Therefore any $x \in P$ must have $U_x(\infty) \leq U_x(\infty) - \delta_n$, which is a contradiction unless $\mathbb{P}(D_{\eta_n}(o))=0$ for all $n$. Applying this in \eqref{eq: limit_ends} gives a contradiction, and shows that \eqref{eq: infinitely_many_ends_assumption} must be false, completing the proof.

\section{Correlation decay: proof of Theorem~\ref{thm: correlation_decay}}\label{sec: correlation_decay}
To prove the correlation decay statement, Theorem~\ref{thm: correlation_decay}, we first study the trace of a vertex $x$ in the median process. This is the set of vertices which ever assume the spin value that is initially at $x$. The main result is that a.s., the trace of a vertex is finite. We use this along with a coupling trick to show the perturbation result, Theorem~\ref{thm: resample_spin}: for Lebesgue a.e.~$p$, in the discrete spin model with initial density $p$ of $+1$, changing one initial spin from $+1$ to $-1$ (or vice-versa) only affects finitely many spins for all time. In other words, a single spin has only a finite range of influence. From this statement, we argue the correlation decay.

\begin{df}\label{def: trace}
For $x \in \mathcal{V}$, the trace of $x$ is the set of vertices
\[
\text{Tr}(x) = \{y \in \mathcal{V} : U_y(t) = U_x(0) \text{ for some } t \geq 0\}.
\]
\end{df}
The main result about the trace is the following. It was previously stated as Theorem~\ref{thm: finite_trace}.
\begin{prop}\label{prop: finite_trace}
Almost surely, one has $\# \text{Tr}(o) < \infty$.
\end{prop}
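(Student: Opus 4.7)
First I would note the structural fact that $\text{Tr}(o)$ is a connected subgraph of $\mathbb{T}_3$, and hence a subtree containing $o$. This follows from the median rule: if $y \in \text{Tr}(o) \setminus \{o\}$ first attains value $U_o(0)$ at a clock ring $\tau_y$, then the median of $y$'s three neighbors at time $\tau_y^-$ equals $U_o(0)$, which forces at least one neighbor of $y$ to already hold $U_o(0)$ at that instant, placing it in $\text{Tr}(o)$. Iterating back in time produces a nearest-neighbor path in $\text{Tr}(o)$ from $y$ to $o$. Consequently, finiteness of $\text{Tr}(o)$ is equivalent to the absence of an infinite self-avoiding ray in $\text{Tr}(o)$ emanating from $o$.

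The plan is to argue by contradiction, supposing $\mathbb{P}(\#\text{Tr}(o)=\infty)>0$. On that event, I would consider the \emph{reception tree} $\mathcal{R}$ with vertex set $\text{Tr}(o)$ rooted at $o$, where each non-root $y$ has parent $p(y)$ defined as the neighbor through which $y$ first received $U_o(0)$ (with an invariant tie-breaking rule if several neighbors hold $U_o(0)$ simultaneously). Since $\mathcal{R}$ is an infinite subtree of $\mathbb{T}_3$ of bounded degree, K\"onig's lemma furnishes an infinite ray $o=z_0,z_1,z_2,\ldots$ in $\mathcal{R}$ with strictly increasing reception times $\tau_{z_0}<\tau_{z_1}<\cdots$ and with each $z_{i-1}$ holding $U_o(0)$ at time $\tau_{z_i}^-$.

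I would first rule out the case $\lim_i \tau_{z_i}=T<\infty$: along the ray, the successive clock rings at $z_1, z_2,\ldots$ in $[0,T]$ produce an infinite chronological path in $[0,T]$, which is excluded by \eqref{eq: infinite_chronological}. So $\tau_{z_i}\to\infty$. Next, the invariant mass transport $m(x,y)=\mathbf{1}_{\{U_y(\infty)=U_x(0)\}}$ gives $\mathbb{E}[\#B_o]=\mathbb{E}[\#\{x:U_o(\infty)=U_x(0)\}]=1$, where $B_o=\{v:U_v(\infty)=U_o(0)\}$; hence $B_o$ is a.s.\ finite and, combined with Theorem~\ref{thm: finite_agreement_clusters}, all but finitely many $z_i$ satisfy $U_{z_i}(\infty)\neq U_o(0)$. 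Lemma~\ref{l:NNS_finite_osc} then pins down the eventual sign $\mathrm{sgn}(U_{z_i}(t)-U_{z_i}(\infty))$ at each such $z_i$, which in particular must agree with $\mathrm{sgn}(U_o(0)-U_{z_i}(\infty))$ whenever $\tau_{z_i}$ is large.

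The hard part, where I expect the bulk of the technical work to lie, is to translate the temporal structure along this ray into an \emph{invariant} subgraph of $\mathbb{T}_3$ to which an invariant-percolation argument (in the spirit of Lemma~\ref{cor: percolation}) can be applied. Since the reception ray is not itself an invariant object, the plan is to symmetrize it by defining an invariant bond percolation on $\mathbb{T}_3$ whose open clusters encode: (a) vertices that hold some common value at a common ancestor's initial value, and (b) the stabilized-sign information provided by Lemma~\ref{l:NNS_finite_osc}. From the assumed existence of the infinite ray one then extracts, via a standard application of the mass transport principle, a cluster that forces either an infinite agreement cluster (contradicting Theorem~\ref{thm: finite_agreement_clusters}) or an energy violation along the lines of Lemma~\ref{lem: no_degree_one}. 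The obstacle is to choose this symmetrization with enough rigidity for the argument to close cleanly, since the naive mass-transport bound $\mathbb{E}[\#\text{Tr}(o)]\le 1+\mathbb{E}[\#\text{flips of }o]$ is not known to be finite.
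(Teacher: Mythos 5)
Your proposal correctly establishes the preliminary structure — connectivity of $\text{Tr}(o)$ via the median rule, the existence of an infinite reception ray by K\"onig's lemma when $\#\text{Tr}(o)=\infty$, and that the reception times must diverge by \eqref{eq: infinite_chronological} — and you correctly diagnose that the naive mass transport $m(x,y)=\mathbf{1}_{\{y\in\text{Tr}(x)\}}$ bounds $\mathbb{E}[\#\text{Tr}(o)]$ by $1+\mathbb{E}[\#\text{flips of }o]$, which is not known to be finite. However, you explicitly stop at the ``hard part,'' proposing to symmetrize the reception ray into an invariant bond percolation feeding into an analogue of Lemma~\ref{cor: percolation} plus Lemma~\ref{l:NNS_finite_osc}. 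That route is left entirely undeveloped and is not how the argument closes; as written the proposal has a genuine gap.

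The idea you are missing is to use a \emph{time-parametrized} mass transport rather than a static or symmetrized one. Set
\[
m_t(x,y)=\mathbf{1}_{\{\#\text{Tr}(x)=\infty\}}\,\mathbf{1}_{\{U_y(t)=U_x(0)\}}.
\]
The expected mass \emph{in} at $o$ is $\mathbb{P}(\exists\,x:\#\text{Tr}(x)=\infty \text{ and } U_o(t)=U_x(0))$, which is at most $1$ because a.s.\ at most one initial spin matches any given value. The expected mass \emph{out} of $o$ is at least $\mathbb{P}(\#\text{Tr}(o)=\infty)$, because on that event the trace-value $U_o(0)$ never dies out (if it died out at a finite time, by the chronological-path bound the trace would be finite), so some vertex holds it at every $t$. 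Sending $t\to\infty$ and using fixation of $U_o(\cdot)$, the mass transport principle yields
\[
\mathbb{P}(\#\text{Tr}(o)=\infty)\ \le\ \mathbb{P}\bigl(\exists\,x:\ \#\text{Tr}(x)=\infty \text{ and } U_o(\infty)=U_x(0)\bigr).
\]
The remaining step is then exactly the connectivity structure you already isolated: on that event, take an infinite self-avoiding ray $x=z_0,z_1,\dots$ in $\text{Tr}(x)$ with $U_{z_n}(t_n)=U_x(0)$ and $t_n\to\infty$; once $o$ has fixated at $U_x(0)$, the tree property forces every vertex on the geodesic from $o$ to $z_n$ to hold $U_x(0)$ at time $t_n$, and since these geodesics eventually grow along the ray, each such vertex has $U_v(\infty)=U_x(0)$, producing an infinite agreement cluster — contradicting Theorem~\ref{thm: finite_agreement_clusters}.

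Your alternative mass transport $m(x,y)=\mathbf{1}_{\{U_y(\infty)=U_x(0)\}}$ only shows that $B_o=\{v:U_v(\infty)=U_o(0)\}$ is a.s.\ finite, which is strictly weaker: $\text{Tr}(o)$ typically contains many vertices that hold $U_o(0)$ only transiently and then fixate elsewhere, so $B_o$ finite does not constrain $\text{Tr}(o)$. And the invariant-percolation/Lemma~\ref{l:NNS_finite_osc} symmetrization you gesture at is not needed — the time-$t$ mass transport completely bypasses the need to build any invariant subgraph out of the (non-invariant) reception ray.
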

\begin{proof}
The idea of the proof is that if $\# \text{Tr}(o) = \infty$ with positive probability, then with positive probability, at a large time $t$ there will also be a vertex $x$ with $U_x(t) = U_o(0)$, and hence $x \in \text{Tr}(o)$,  such that $x$ has stopped flipping (that is, $U_x(s) = U_x(\infty)$ for all $s \ge t$). Next, on the event $\# \text{Tr}(o) = \infty$, we can find an infinite self-avoiding path starting at the root and consisting of vertices $o, x_1, x_2, x_3, \ldots$ such that each $x_n \in \text{Tr}(o)$.
Let $(t_n)$ be an increasing sequence of times such that $U_{x_n}(t_n) = U_o(0)$. We then have a.s.\ $t_n \to \infty$ (because of finiteness of chronological paths). Furthermore one can check that for any $t \geq 0$ and vertices $w,z$, if $U_w(t) = U_z(t)$, then the direct path in $\mathbb{T}_3$ between $w$ and $z$ must also have all vertices with spin equal to $U_w(t)$ at time $t$. (This is false on other graphs, like $\mathbb{T}_5$.) Since for large $n$, $U_x(t_n) = U_{x_n}(t_n)$, the spins of the vertices on the path from $x$ to $x_n$ at time $t_n$ are all equal to $U_o(0)$. This implies that the agreement cluster of $x$ at time infinity is infinite, and this event has zero probability. 

To carry out this argument, define, for $t\geq 0$, the mass transport
\[
m_t(x,y) = \begin{cases}
1 & \quad \text{if } \# \text{Tr}(x) = \infty \text{ and } U_y(t) = U_x(0) \\
0 & \quad \text{otherwise}.
\end{cases}
\]
Note that for any fixed $t$, on the event that $\{\#\text{Tr}(o) = \infty\}$, there exists at least one vertex $y$ such that $U_y(t) = U_o(0)$. 
Then by the mass transport principle, for any $t$,
\begin{align*}
\mathbb{P}(\#\text{Tr}(o) = \infty) &\leq \mathbb{E}\left[ \mathbf{1}_{\{\#\text{Tr}(o) = \infty\}} \sum_{y \in \mathcal{V}} \mathbf{1}_{\{U_y(t) = U_o(0)\}} \right] \\
&= \mathbb{E}\sum_{y \in \mathcal{V}} m_t(o,y) \\
&= \mathbb{E}\sum_{x \in \mathcal{V}} m_t(x,o) \\
&= \mathbb{E}\#\{x \in \mathcal{V} : \#\text{Tr}(x) = \infty \text{ and } U_o(t) = U_x(0)\} \\
&= \mathbb{P}(\exists ~x \in \mathcal{V} : \#\text{Tr}(x) = \infty \text{ and } U_o(t) = U_x(0)) \\
&\leq \mathbb{P}(\exists ~x \in \mathcal{V} : \#\text{Tr}(x) = \infty \text{ and } U_o(\infty) = U_x(0)) \\
&+ \mathbb{P}(U_o(\infty) \neq U_o(t)).
\end{align*}
Letting $t \to \infty$ and using that vertices flip only finitely often a.s. (a combination of Proposition~\ref{prop: infinite_equivalence} and Theorem~\eqref{thm: finite_agreement_clusters}), we obtain
\[
\mathbb{P}(\#\text{Tr}(o) = \infty) \leq \mathbb{P}(\exists ~x \in \mathcal{V} : \#\text{Tr}(x) = \infty \text{ and } U_o(\infty) = U_x(0)).
\]
As discussed at the beginning of the proof, the right side is bounded above by $\mathbb{P}(\exists x \in \mathcal{V} : \#\mathcal{A}_x = \infty)$, which is zero by Theorem~\eqref{thm: finite_agreement_clusters}.
\end{proof}

The next step in the proof of Theorem~\ref{thm: correlation_decay} is to relate the finiteness of the trace in the continuous spin model to the influence of the initial value of a single spin $\sigma_x(0)$ on the evolution in the discrete spin model. Given an initial spin configuration and realization of Poisson clocks (dynamics) $(\sigma(0),\omega)$, we define the discrete evolution $\sigma^+(\cdot)$ given by the usual majority vote dynamics specified in \eqref{eq: energy_rules}, but with initial configuration $\sigma^+(0)$ which equals $\sigma(0)$ except at the root $o$, where $\sigma_o^+(0) = +1$. Similarly we define the evolution $\sigma^-(\cdot)$ by setting $\sigma_o^-(0) = -1$. For two evolutions $\sigma(\cdot),\tau(\cdot) \in \{-1,+1\}^{\mathcal{V}\times [0,\infty)}$, recall the definition of the symmetric difference
\[
\sigma(\cdot) \Delta \tau(\cdot) = \{y \in \mathcal{V} : \sigma_y(t) \neq \tau_y(t) \text{ for some } t \geq 0\}
\]
as the set of vertices that have different spins in the evolutions $\sigma$ and $\tau$ at some time $t$. The following lemma was previously stated as Theorem~\ref{thm: resample_spin}.
\begin{lem}\label{lem: resample_spin}
For Lebesgue-a.e.\ $p \in [0,1]$, one has
\begin{equation}\label{eq: resample_spin}
\mathbb{P}_p\left(\# \sigma^+(\cdot) \Delta \sigma^-(\cdot) < \infty \right) = 1.
\end{equation}
\end{lem}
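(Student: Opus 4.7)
The plan is to lift the discrete perturbation problem to the median process via Lemma~\ref{lem: projection}, and then exploit finiteness of the trace (Proposition~\ref{prop: finite_trace}) through a Fubini argument over the threshold $p$. On an enlarged probability space, take i.i.d.\ uniform initial spins $(U_x(0))_{x\in\mathcal{V}}$, an independent uniform random variable $U_o'$, and the Poisson clocks $\omega$. Run the standard median process $U(\cdot)$ from $(U_x(0))$ and a second median process $U^*(\cdot)$ obtained by replacing $U_o(0)$ by $U_o'$, both driven by the same clocks. By Proposition~\ref{prop: finite_trace} applied to each, the traces $\mathrm{Tr}(o)$ and $\mathrm{Tr}^*(o)$ are each a.s.\ finite.

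Next, for each $p\in(0,1)$, I would condition on the event $\{U_o(0) \le p < U_o'\}$, which has probability $p(1-p)$. On this event, Lemma~\ref{lem: projection} identifies $(\pi_p(U(\cdot)), \pi_p(U^*(\cdot)))$ with the joint law of $(\sigma^+(\cdot), \sigma^-(\cdot))$ under $\mathbb{P}_p$: the two configurations agree off $o$, with $\sigma^\pm_o(0) = \pm 1$, and evolve under the common clocks. A Fubini argument then reduces the lemma to showing
\[
\mathrm{Leb}\bigl\{ p\in[0,1] : \#\bigl(\pi_p(U)\,\Delta\,\pi_p(U^*)\bigr) = \infty \bigr\} = 0 \quad \mathrm{a.s.},
\]
since integrating $p(1-p) \cdot \mathbb{P}_p(|\sigma^+\Delta\sigma^-|=\infty)$ against this zero-measure set gives vanishing integrand, hence $\mathbb{P}_p(|\sigma^+ \Delta \sigma^-|=\infty)=0$ for Lebesgue-a.e.\ $p$.

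The key claim I would establish is: almost surely, for every $x \notin \mathrm{Tr}(o)\cup\mathrm{Tr}^*(o)$ and for Lebesgue-a.e.\ $p\in[0,1]$, one has $\pi_p(U_x(t)) = \pi_p(U_x^*(t))$ for all $t$. Heuristically, if $x$ never receives the value $U_o(0)$ in the first process, nor $U_o'$ in the second, then the trajectory at $x$ is built from the common pool $\{U_y(0) : y\neq o\}$ in both processes. I would argue by induction along chronological paths (using \eqref{eq: infinite_chronological} to ensure finiteness locally in time) that any discrepancy in the median computed at $x$ traces back to an ordering change involving $U_o(0)$ or $U_o'$, which can only occur at a neighbor lying in the finite boundary of $\mathrm{Tr}(o)\cup\mathrm{Tr}^*(o)$. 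Using Theorem~\ref{thm: continuous_theorem}, each $U_x, U_x^*$ is eventually constant and takes only finitely many values; outside the finite ``rippled'' region, thresholding at a generic $p$ produces identical discrete trajectories, excluding only a countable collection of $p$'s. Granted this, the discrete symmetric difference is contained in a (random) finite set for a.e.\ $p$, completing the proof.

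I expect the main obstacle to be establishing that the \emph{discrete} symmetric difference is controlled by the union of traces, since the corresponding \emph{continuous} difference set $\{x : U_x(t)\ne U_x^*(t) \text{ for some } t\}$ may well be infinite --- order changes at trace vertices can cascade outward through median updates at non-trace neighbors. The resolution lies in showing that these cascading differences collapse under the projection $\pi_p$ for all but a Lebesgue-null set of thresholds, because the specific values affected are drawn from a countable set. The delicate part of the proof is the chronological bookkeeping that converts the qualitative finiteness of traces into a quantitative statement about the discrete projection.
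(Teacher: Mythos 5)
Your approach of introducing a \emph{second} median process $U^*(\cdot)$ (with the root's initial spin independently resampled) and comparing $\pi_p(U)$ with $\pi_p(U^*)$ has a genuine gap at the key claim. That claim --- for every $x\notin\text{Tr}(o)\cup\text{Tr}^*(o)$ and a.e.\ $p$, $\pi_p(U_x(t))=\pi_p(U^*_x(t))$ for all $t$ --- is equivalent to asserting $U_x(\cdot)\equiv U^*_x(\cdot)$ at all such $x$, because if $U_x(t)\neq U^*_x(t)$ at any single time then the bad set of $p$ is an interval of positive measure. This is false in general, and the cascading you flag is precisely the reason. Concretely: if a neighbor $a$ of $o$ rings before $o$, its median under the two processes can resolve to two \emph{different} values from the common pool (e.g.\ $U_d(0)$ vs.\ $U_e(0)$, where $d,e$ are $a$'s other neighbors), simply because $U_o(0)$ is the middle of the triple in one process and an extreme in the other. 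Then $a$ need lie in neither trace, yet $U_a\neq U^*_a$, and this divergence propagates outward taking values that never equal $U_o(0)$ or $U_o'$. The bad set of thresholds at each such vertex has positive measure, so there is no automatic ``collapse for a.e.\ $p$'' coming from countability of the value pool; one would have to bound the total measure of these bad sets, which is not addressed.

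The paper's proof avoids this obstruction entirely by never running a second continuous process. It takes the single median process $U(\cdot)$ and produces two discrete evolutions $\tau^+,\tau^-$ by thresholding the \emph{same} trajectories at $U_o(0)$, once with $\le$ and once with $<$. Two projections of a common trajectory can differ at $x$ only at times $t$ with $U_x(t)=U_o(0)$, so $\tau^+(\cdot)\Delta\tau^-(\cdot)=\text{Tr}(o)$ \emph{by construction}, and Proposition~\ref{prop: finite_trace} gives finiteness with no cascading to control. One then checks, by a commutativity argument as in Lemma~\ref{lem: projection}, that conditionally on $U_o(0)=u$ the pair $(\tau^+,\tau^-)$ has the law of $(\sigma^+,\sigma^-)$ under $\mathbb{P}_u$, and Fubini over the uniform variable $u$ yields the statement for Lebesgue-a.e.\ $p$. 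To salvage your two-process route you would effectively need to show that the continuous symmetric difference $\{x:U_x(\cdot)\not\equiv U^*_x(\cdot)\}$ is a.s.\ finite, which is a strictly stronger and unproven statement than finiteness of $\text{Tr}(o)$.
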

\begin{proof}
The proof will be a direct consequence of finiteness of trace in the continuous model, after applying a certain projection. For a given evolution of continuous spins $U(\cdot)$, we define two projected evolutions $\tau^+(\cdot)$ and $\tau^-(\cdot)$ in $\{-1,+1\}^{\mathcal{V}\times[0,\infty)}$ by
\[
\tau_x^+(t) = \begin{cases}
+1 & \quad\text{if } U_x(t) \leq U_o(0) \\
-1 & \quad\text{if } U_x(t) > U_o(0)
\end{cases}
\]
and
\[
\tau_x^-(t) = \begin{cases}
+1 & \quad\text{if } U_x(t) < U_o(0) \\
-1 & \quad\text{if } U_x(t) \geq U_o(0).
\end{cases}
\] 
These are projections relative to the initial value of the root. Note that by construction
\[
\tau^+(\cdot) \Delta \tau^-(\cdot) = \text{Tr}(o),
\]
so by Proposition~\ref{prop: finite_trace},
\begin{equation}\label{eq: one_half_projection}
\mathbb{P}\left( \# \tau^+(\cdot) \Delta \tau^-(\cdot) < \infty \right) = 1.
\end{equation}

We next must compare the pairs of processes $(\tau^+(\cdot),\tau^-(\cdot))$ and $(\sigma^+(\cdot),\sigma^-(\cdot))$. The idea is that because, conditional on $U_o(0)$, the families $(\tau_x^+(0):x \neq o)$ and $(\tau_x^-(0):x \neq o)$ are each i.i.d.~with $\mathbb{P}(\tau_x^{\pm}(0) = 1) = U_o(0) = 1-\mathbb{P}(\tau_x^{\pm}(0)=-1)$, the pair $(\tau^+(\cdot),\tau^-(\cdot))$ has the same distribution as $(\sigma^+(\cdot),\sigma^-(\cdot))$ does under $\widetilde{\mathbb{P}}$. Here $\widetilde{\mathbb{P}}$ is a measure on pairs $(\sigma(0), \omega)$, where the two entries are independent, $\omega$ is a realization of rate-one Poisson clocks, $\sigma(0) = (\sigma_x(0))_{x \in \mathcal{V}}$ is an i.i.d.~collection with $\mathbb{P}(\sigma_x(0)=1) = Y = 1-\mathbb{P}(\sigma_x(0)=-1)$, and $Y$ is an independent random variable with uniform distribution on $[0,1]$. So the lemma should follow from \eqref{eq: one_half_projection} and Fubini's theorem.
%
%

Most of the proof is almost the same as that of Lemma~\ref{lem: projection}, but with more complicated notation, so we omit some details. For discrete spin configurations $\sigma(0), \tau(0) \in \{-1,+1\}^{\mathcal{V}}$ and $\omega$ a realization of Poisson clocks, let $\hat \Phi$ be the map that sends the element $((\sigma(0),\tau(0)), \omega)$ to the pair of discrete evolutions $(\sigma(\cdot), \tau(\cdot))$. As before, for an initial configuration of continuous spins $U(0)$ and $\omega$ a realization of Poisson clocks, let $\Psi$ be the map that sends $(U(0), \omega)$ to the continuous evolution $U(\cdot)$. We also need projection operators, but this time their definitions are more involved. Define the operator
\[
\pi : [0,1]^{\mathcal{V}} \to \{-1,+1\}^{\mathcal{V}} \times \{-1,+1\}^{\mathcal{V}}
\]
by $\pi(\hat U) = (\pi_+(\hat U), \pi_-(\hat U))$, where
\[
\left(\pi_+(\hat U)\right)_x = \begin{cases}
+1 & \quad\text{if } \hat U_x \leq \hat U_o \\
-1 & \quad\text{if } \hat U_x > \hat U_o
\end{cases}
\]
and
\[
\left(\pi_-(\hat U)\right)_x = \begin{cases}
+1 & \quad\text{if } \hat U_x < \hat U_o \\
-1 & \quad\text{if } \hat U_x \geq \hat U_o.
\end{cases}
\]
In words, $\pi$ sends a continuous spin configuration to a pair of two discrete configurations which are projected in two different ways according to the spin at the root $o$. Note that
\begin{equation}\label{eq: equal_initial_conditioned}
\begin{split}
\text{the distribution of } ((\sigma^+(0),\sigma^-(0)),\omega) &~~~ \text{ under } \widetilde{\mathbb{P}} \\
=~\text{the distribution of } (\pi(U(0)),\omega) &~~~ \text{ under } \mathbb{P} 
\end{split}
\end{equation}

We must also extend $\pi$ to evolutions, defining
\[
\pi^* : [0,1]^{\mathcal{V} \times [0,\infty)} \to \{-1,+1\}^{\mathcal{V} \times [0,\infty)} \times \{-1,+1\}^{\mathcal{V} \times [0,\infty)}
\]
by $\pi^*(\hat U(\cdot)) = \left( \pi_+^*(\hat U(\cdot)), \pi_-^*(\hat U(\cdot))\right)$,
where
\[
\left( \pi_+^*(\hat U(\cdot))\right)_x(s) = \begin{cases}
+1 & \quad\text{if } \hat U_x(s) \leq \hat U_o(0) \\
-1 & \quad\text{if } \hat U_x(s) > \hat U_o(0)
\end{cases}
\]
and
\[
\left( \pi_-^*(\hat U(\cdot))\right)_x(s) = \begin{cases}
+1 & \quad\text{if } \hat U_x(s) < \hat U_o(0) \\
-1 & \quad\text{if } \hat U_x(s) \geq \hat U_o(0).
\end{cases}
\]
In words, $\pi^*$ sends a continuous evolution to a pair of discrete evolutions which are projected in two different ways according to the continuous spin at the root $o$ at time $0$. Now, just as in \eqref{eq: commutative}, one can check the relation
\begin{equation}\label{eq: complicated_commutative}
\hat \Phi(\pi(U(0)), \omega) = \pi^*(\Psi(U(0),\omega)) \quad \mathbb{P}\text{-a.s.}
\end{equation}

Next, we must show that for any Borel set $B$ of $\{-1,+1\}^{\mathcal{V}\times [0,\infty)} \times \{-1,+1\}^{\mathcal{V}\times [0,\infty)}$,
\begin{equation}\label{eq: equal_dist_conditioning}
\widetilde{\mathbb{P}}\left( (\sigma^+(\cdot),\sigma^-(\cdot)) \in B\right) = \mathbb{P}\left( (\tau^+(\cdot), \tau^-(\cdot)) \in B\right).
\end{equation}
Showing this is similar to showing \eqref{eq: to_show_measure_theory}. We can rewrite both sides in our notation as
\[
\widetilde{\mathbb{P}} \left( \hat \Phi((\sigma^+(0),\sigma^-(0)),\omega) \in B\right) = \mathbb{P}\left( \pi^*(\Psi(U(0),\omega)) \in B \right).
\]
Using \eqref{eq: complicated_commutative}, we can again rewrite it as
\[
\widetilde{\mathbb{P}} \left( \hat \Phi((\sigma^+(0),\sigma^-(0)),\omega) \in B\right) = \mathbb{P}\left(\hat \Phi(\pi(U(0)),\omega) \in B \right).
\]
These are equal because of \eqref{eq: equal_initial_conditioned}, and this shows \eqref{eq: equal_dist_conditioning}.

By \eqref{eq: one_half_projection} and \eqref{eq: equal_dist_conditioning}, one has
\[
\widetilde{\mathbb{P}}(\#\sigma^+(\cdot) \Delta \sigma^-(\cdot) < \infty) = 1,
\]
and so by Fubini's theorem,
\[
\widetilde{\mathbb{P}}(\#\sigma^+(\cdot) \Delta \sigma^-(\cdot) < \infty \mid Y = p) = 1 \text{ for Lebesgue-a.e. } p \in [0,1].
\]
(The measure on the left is the regular conditional probability measure.) However, given $Y=p$, the initial spins $(\sigma_x(0))_{x \neq o}$ are i.i.d.~with distribution $\mathbb{P}(\sigma_x(0) = +1) = p = 1-\mathbb{P}(\sigma_x(0) = -1)$. Because the event in question depends only on the initial spins at vertices not equal to $o$ and all the Poisson clocks, this is the same as
\[
\mathbb{P}_p(\#\sigma^+(\cdot) \Delta \sigma^-(\cdot) < \infty) = 1 \text{ for Lebesgue-a.e.} p \in [0,1].
\]
This completes the proof.
\end{proof}

Lemma~\ref{lem: resample_spin} implies that for Lebesgue-a.e.\ $p$, if we compare two discrete evolutions, one with initial condition $\sigma(0)$ and one with an initial condition equal to $\sigma(0)$ except the initial spin at the root $o$ is resampled, then they will $\mathbb{P}_p$-a.s.~only differ at finitely many vertices of $\mathcal{V}$ for all time. From this we conclude that resampling a finite number of initial spins has a ``finite effect'' on the evolution. For the correlation decay statement of Theorem~\ref{thm: correlation_decay}, we will also need to resample Poisson clocks, so our next lemma deals with this additional resampling.

For an initial configuration $\sigma(0)$ in $\{-1,+1\}^{\mathcal{V}}$ sampled from the Bernoulli product measure $\mathbb{P}_p$ with $p \in [0,1]$, let $X$ be an independent variable with $\mathbb{P}(X = 1) = p = 1-\mathbb{P}(X=-1)$, and define $\sigma'(0) \in \{-1,+1\}^{\mathcal{V}}$ as 
\[
\sigma_x'(0) = \begin{cases}
\sigma_x(0) & \quad \text{if } x \neq o \\
X & \quad \text{if } x=o.
\end{cases}
\]
We similarly define realizations of Poisson clocks, taking $\omega$ as usual as a realization of independent Poisson clocks at all vertices of $\mathcal{V}$, and $\omega'$ equal to $\omega$ except with the clock at $o$ resampled.

\begin{lem}\label{lem: resample_all}
If $\Phi$ is the map that sends an initial configuration $(\sigma(0),\omega)$ to its discrete evolution $(\sigma(t))_{t \geq 0}$, then for Lebesgue-a.e.\ $p \in [0,1]$,
\begin{equation}\label{eq: resample_all}
\mathbb{P}_p(\# \Phi(\sigma(0),\omega) \Delta \Phi(\sigma'(0),\omega') < \infty) = 1.
\end{equation}
\end{lem}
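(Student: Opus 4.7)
My approach is to isolate the clock-resampling effect via a triangle inequality, reducing the joint spin+clock resampling to Lemma~\ref{lem: resample_spin} plus a purely clock-resampling statement. Inserting the intermediate evolution $\Phi(\sigma'(0),\omega)$ (resampled spin at $o$, original clocks) gives
\[
\Phi(\sigma(0),\omega)\,\Delta\,\Phi(\sigma'(0),\omega') \subseteq [\Phi(\sigma(0),\omega)\,\Delta\,\Phi(\sigma'(0),\omega)]\cup[\Phi(\sigma'(0),\omega)\,\Delta\,\Phi(\sigma'(0),\omega')].
\]
The first set is the symmetric difference of two evolutions with common clocks but differing initial spin at $o$, and hence is a.s.\ finite for Lebesgue-a.e.\ $p$ by Lemma~\ref{lem: resample_spin}. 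The task therefore reduces to controlling the second set --- two evolutions with common initial configuration $\sigma'(0)$ but clocks $\omega,\omega'$ differing only at $o$.

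For this clock-only piece I would repeat the lift to the median process from the proof of Lemma~\ref{lem: resample_spin}. Consider a single i.i.d.\ uniform initial configuration $U(0)$ together with two rate-one Poisson clock families agreeing off $o$ and independent at $o$, and the corresponding median processes $U(\cdot)$ and $\widetilde U(\cdot)$. A small extension of Lemma~\ref{lem: projection} shows that for each $p$ the pair $(\pi_p(U(\cdot)),\pi_p(\widetilde U(\cdot)))$ has the joint distribution of $(\Phi(\sigma'(0),\omega),\Phi(\sigma'(0),\omega'))$ under $\mathbb{P}_p$. Thus, once the disagreement set $W:=\{x:U_x(t)\neq \widetilde U_x(t)\text{ for some }t\}$ is shown to be a.s.\ finite in the median coupling, Fubini in the threshold $p$ (exactly as at the end of the proof of Lemma~\ref{lem: resample_spin}) delivers the clock-only statement for a.e.\ $p$.

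To bound $W$ I would combine Theorem~\ref{thm: continuous_theorem} with Proposition~\ref{prop: finite_trace}: almost surely $o$ flips finitely often in each of the two processes, so the value sets $V_U=\{U_o(s):s\ge 0\}$ and $V_{\widetilde U}=\{\widetilde U_o(s):s\ge 0\}$ are finite, and each element of $V_U\cup V_{\widetilde U}$ is $U_z(0)$ for some $z$ with a.s.\ finite traces $\text{Tr}_U(z),\text{Tr}_{\widetilde U}(z)$. The plan is to prove by induction on successive clock-ring times that $W$ lies in the finite union $\bigcup_{z:\,U_z(0)\in V_U\cup V_{\widetilde U}}(\text{Tr}_U(z)\cup \text{Tr}_{\widetilde U}(z))$. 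The main obstacle is exactly this inductive step: when a clock at some $x\neq o$ rings and produces different medians in the two processes because some neighbor $y$ of $x$ lies in $W$, I must verify via a case analysis of the three neighbor values that at least one of the two new values at $x$ must coincide with some value $o$ has previously held in the corresponding process, thereby placing $x$ in the relevant trace.
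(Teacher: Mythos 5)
Your first step is correct and also appears implicitly in the paper: inserting the intermediate evolution $\Phi(\sigma'(0),\omega)$ and invoking Lemma~\ref{lem: resample_spin} for the spin-only resampling reduces everything to the clock-only resampling at $o$. Where you depart from the paper is in how you handle that clock-only piece, and there your argument has a genuine gap.

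The paper never tries to control the median-process disagreement set $W$ directly. Instead it resamples the clock only on a finite time window $[0,T]$ with $T$ chosen so that $o$ has fixated in the modified evolution (so $\omega'$ and $\omega'_T$ produce identical dynamics, with high probability), then uses finite speed of propagation (Lemma~\ref{lem: chronological}) to sandwich $\Phi(\sigma'(0),\omega'_T)$ between $\Phi(\sigma_{+,R}(0),\omega)$ and $\Phi(\sigma_{-,R}(0),\omega)$ for $R$ large, and finally applies Lemma~\ref{lem: resample_spin} finitely many times to conclude the symmetric difference of the two sandwiching evolutions is finite. Attractiveness is the engine; the median process only enters indirectly through Lemma~\ref{lem: resample_spin}.

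Your inductive claim --- that $W$ is contained in $\bigcup_{z:\,U_z(0)\in V_U\cup V_{\widetilde U}}\bigl(\mathrm{Tr}_U(z)\cup\mathrm{Tr}_{\widetilde U}(z)\bigr)$ --- appears to be false, and it breaks already at the first propagation step. Suppose $o$'s clock has rung in $U$ but not in $\widetilde U$, so $U_o\neq\widetilde U_o$, and a neighbor $x$ of $o$ now updates. Write $v_1\le v_2$ for the (common) values of $x$'s other two neighbors $a,b$. If $U_o(t^-) < v_1 < v_2 < \widetilde U_o(t^-)$, then the median update gives $U_x(t)=v_1$ and $\widetilde U_x(t)=v_2$. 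These are initial values of $a$ and $b$, with no reason to coincide with any value $o$ has ever held or will hold; they simply get ``relayed'' because $o$'s two values straddle them. So $x\in W$ but need not lie in any of your designated traces, and the induction does not close. The same relay mechanism can recur at subsequent steps, producing a growing disagreement set via values that never visit $o$. Finiteness of $W$ is likely true, but you need a different mechanism --- e.g., the paper's argument, which truncates the clock resampling to a finite window and uses attractiveness plus finite speed of propagation rather than a value-tracking induction --- to prove it.
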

\begin{proof}
Fix any $p$ satisfying~\eqref{eq: resample_spin}. We first resample the clock at the root for only a finite amount of time. For a fixed $T>0$, let $\omega'_T$ be the realization of Poisson clocks which equals $\omega'$ until time $T$, and then equals $\omega$ afterward. Note that $\omega'_T$ has the same distribution as $\omega$ or $\omega'$. Furthermore, a.s., for our given pairs $(\sigma(0),\omega)$, $(\sigma'(0),\omega')$, there exists a (random) $T$ large enough so that 
\[
\#\Phi(\sigma'(0),\omega') \Delta \Phi (\sigma'(0),\omega'_T) = 0.
\]
The reason is that from \eqref{eq: discrete_limit}, a.s. in the evolution $\Phi(\sigma'(0),\omega')$, the root $o$ flips only finitely often and must in fact be ``stable'' for all large times. In other words, for all large times, the spin at $o$ agrees with those of at least two of its neighbors, and neither the root nor any of its neighbors flip. If $T$ is taken to be beyond this time, there will be no difference in the evolutions using clocks $\omega'$ or $\omega'_T$. Therefore given $\epsilon>0$, we may fix a deterministic $T>0$ such that
\begin{equation}\label{eq: can_resample_finite_clock}
\mathbb{P}_p\left( \#\Phi(\sigma'(0),\omega') \Delta \Phi(\sigma'(0),\omega'_T) = 0 \right) > 1-\epsilon/2.
\end{equation}

Next we show that if the clock at the root is only resampled for a finite time, the evolution using this clock can be bounded between evolutions using the original clock, but with initial spin configurations that are identically $+1$ or $-1$ on a ball of large radius centered at the root. To do this, we define for $R>0$ configurations $\sigma_{+,R}(0)$ (respectively $\sigma_{-,R}(0)$) in $\{-1,+1\}^{\mathcal{V}}$ to be equal to $\sigma(0)$ for vertices at distance $>R$ from the root and equal to $+1$ (respectively $-1$) for all other vertices. By applying Lemma~\ref{lem: resample_spin} finitely  many times, one has for any $R>0$,
\begin{equation}\label{eq: resample_many_spins}
\mathbb{P}_p\left( \# \Phi(\sigma_{+,R}(0),\omega) \Delta \Phi(\sigma_{-,R}(0),\omega) < \infty\right) = 1.
\end{equation}

Note that by attractiveness (see the end of Section~\ref{sec: tools}), one has for any $R>0$,
\begin{equation}\label{eq: trivial_dominance}
\mathbb{P}_p\left( \Phi(\sigma_{+,R}(0),\omega) \geq \Phi(\sigma(0),\omega) \geq \Phi(\sigma_{-,R}(0),\omega)\right) = 1,
\end{equation}
where $\geq$ means pointwise inequality for all times. We next prove that a similar statement holds if the middle term is replaced by $\Phi(\sigma'(0),\omega'_T)$, so long as $R$ is large: for our fixed $T$, the following inequality holds a.s.\ for all (random) $R$ large enough:
\begin{equation}\label{eq: propagation_dominance}
\Phi(\sigma_{+,R}(0),\omega) \geq \Phi(\sigma'(0),\omega'_T) \geq \Phi(\sigma_{-,R}(0),\omega),
\end{equation}
To show \eqref{eq: propagation_dominance}, note that by attractiveness and the fact that the clock configurations $\omega'_T$ and $\omega$ are equal for times $>T$, one need only show that for all $x \in \mathcal{V}$,
\begin{equation}\label{eq: finite_time_propagation_dominance}
\left(\Phi(\sigma_{+,R}(0),\omega)\right)_x(t) \geq \left(\Phi(\sigma'(0),\omega'_T)\right)_x(t) \geq \left(\Phi(\sigma_{-,R}(0),\omega)\right)_x(t) \quad \text{for all } t \leq T.
\end{equation}
This inequality follows from the finite speed of propagation in the model using clocks $\omega$. Supposing, for example, that the left inequality were to fail, then for some $t \leq T$, one would have $\left(\Phi(\sigma_{+,R}(0),\omega)\right)_o(t) = -1$; otherwise, the spin at the root $o$ in $\Phi(\sigma_{+,R}(0),\omega)$ would always dominate the corresponding spin $\Phi(\sigma'(0),\omega'_T)$, and since the clocks $\omega'_T$ only differ from those in $\omega$ at the root, attractiveness would imply \eqref{eq: propagation_dominance}. Therefore there must be a chronological path for $[0,T]$ starting at a vertex $x$ with $\text{dist}(x,o)=R$ and ending at o.
A similar statement holds in the case of the right inequality of \eqref{eq: finite_time_propagation_dominance}, so by Lemma~\ref{lem: chronological},
\begin{align*}
\mathbb{P}_p\left( \eqref{eq: finite_time_propagation_dominance} \text{ fails}\right) \leq \frac{5e^{4T}}{4} \left( \frac{4}{5}\right)^{R+1}.
\end{align*}
Since this bound is summable in $R$, the Borel-Cantelli lemma shows that \eqref{eq: finite_time_propagation_dominance} holds a.s.\ for all large $R$. (In fact Borel-Cantelli is not needed here since the events are monotone in $R$ by attractiveness.) Because \eqref{eq: finite_time_propagation_dominance} implies \eqref{eq: propagation_dominance}, one can, for our fixed $T$, choose $R>0$ so large that
\begin{equation}\label{eq: last_inequality_for_now}
\mathbb{P}_p\left( \Phi(\sigma_{+,R}(0),\omega) \geq \Phi(\sigma'(0),\omega'_T) \geq \Phi(\sigma_{-,R}(0),\omega)\right) > 1-\epsilon/2.
\end{equation}

From the bounds above, we see that with probability at least $1-\epsilon$, the intersection of the events in \eqref{eq: can_resample_finite_clock}, \eqref{eq: resample_many_spins}, \eqref{eq: trivial_dominance}, and \eqref{eq: last_inequality_for_now} occurs. But on this intersection, one must have $\#\Phi(\sigma(0),\omega) \Delta \Phi(\sigma'(0),\omega') < \infty$. Indeed, this symmetric difference is contained the following union:
\begin{align*}
\left( \Phi(\sigma(0),\omega) \Delta \Phi(\sigma_{+,R}(0),\omega) \right) &\cup \left(\Phi(\sigma_{+,R}(0),\omega) \Delta \Phi(\sigma'(0),\omega'_T) \right) \\
&\cup \left( \Phi(\sigma'(0),\omega_T') \Delta \Phi(\sigma'(0), \omega')\right).
\end{align*}
The first term is finite on the intersection of events in \eqref{eq: resample_many_spins} and \eqref{eq: trivial_dominance}. The second is finite on the intersection of events in \eqref{eq: resample_many_spins} and \eqref{eq: last_inequality_for_now}, and the third is finite on the event in \eqref{eq: can_resample_finite_clock}. Therefore the probability in the statement of the lemma is at least $1-\epsilon$, and since $\epsilon$ is arbitrary, this completes the proof.
\end{proof}

Given the resampling result, Lemma~\ref{lem: resample_all}, we can now prove Theorem~\ref{thm: correlation_decay}.
\begin{proof}[Proof of Theorem~\ref{thm: correlation_decay}]
Fix any $p$ from \eqref{eq: resample_all}. To show that the strong mixing coefficient $\alpha_{r,R}(p) \to 0$ as $R\to\infty$, we will show that the collections $(\sigma_x(\infty) : \text{dist}(x,o) \geq R)$ and $(\sigma_x(\infty) : \text{dist}(x,o) \leq r)$ are with high probability equal to two independent collections. So let $M>0$ and let $(\sigma(0),\omega)$ and $(\sigma'(0),\omega')$ be two independent realizations of initial spins and Poisson clock configurations, each sampled from $\mathbb{P}_p$. Define the new configurations $(\sigma^{(1)}(0),\omega^{(1)})$ (which will represent resampling outside the ball of radius $M$ centered at $o$) and $(\sigma^{(2)}(0),\omega^{(2)})$ (which will represent resampling inside the ball of radius $M$ centered at $o$) as follows. We set $(\sigma^{(1)}(0),\omega^{(1)})$ equal to $(\sigma(0),\omega)$ at vertices $x$ with $\text{dist}(x,o) \leq M$ and $(\sigma'(0),\omega')$ elsewhere, while we set $(\sigma^{(2)}(0),\omega^{(2)})$ equal to $(\sigma(0),\omega)$ at vertices $x$ with $\text{dist}(x,o) > M$ and $(\sigma'(0),\omega')$ elsewhere. Note that
\begin{equation}\label{eq: initial_independence}
(\sigma^{(1)}(0),\omega^{(1)}) \text{ and } (\sigma^{(2)}(0),\omega^{(2)}) \text{ are independent}.
\end{equation}

To use \eqref{eq: initial_independence}, let $r,R$ be fixed with $0 \leq r \leq R < \infty$, let $A$ be any event in $\Sigma_{\leq r}$, and let $B$ be any event in $\Sigma_{\geq R}$. We can then choose Borel sets $\hat A$ and $\hat B$ depending only on values of spins at vertices $x$ with $\text{dist}(x,o) \leq r$ and $\text{dist}(x,o) \geq R$ respectively so that
\[
A = \{\sigma(\infty)\in \hat A\} \quad \text{and} \quad B = \{\sigma(\infty) \in \hat B\}.
\]
For $i=1,2$ and $x \in \mathcal{V}$, write $\sigma_x^{(i)}(\infty)$ for the limit $\lim_{t \to \infty} \sigma_x^{(i)}(t)$ in the model corresponding to $(\sigma^{(i)}(0),\omega^{(i)})$ and note that these limits exist $\mathbb{P}_p$-a.s. By \eqref{eq: initial_independence}, one has
\begin{align}
&|\mathbb{P}_p(A \cap B) - \mathbb{P}_p(A) \mathbb{P}_p(B)| \nonumber \\
=~& |\mathbb{P}_p(\sigma(\infty) \in \hat A, \sigma(\infty) \in \hat B) - \mathbb{P}_p(\sigma(\infty) \in \hat A) \mathbb{P}_p(\sigma(\infty) \in \hat B)| \nonumber \\
=~& |\mathbb{P}_p(\sigma(\infty) \in \hat A, \sigma(\infty) \in \hat B) - \mathbb{P}_p(\sigma^{(1)}(\infty) \in \hat A, \sigma^{(2)}(\infty) \in \hat B)| \nonumber \\
\leq~& \mathbb{P}_p(\sigma_x(\infty) \neq \sigma_x^{(1)}(\infty) \text{ for some }x \text{ with dist}(x,o) \leq r) \label{eq: correlation_term_1} \\
+~& \mathbb{P}_p(\sigma_x(\infty) \neq \sigma_x^{(2)}(\infty) \text{ for some } x \text{ with dist}(x,o) \geq R) \label{eq: correlation_term_2}.
\end{align}

We next show that if $r \ll M \ll R$, then the terms of \eqref{eq: correlation_term_1} and \eqref{eq: correlation_term_2} are small, using both our resampling results and the finite speed of information propagation. Because these terms do not depend on $A$ or $B$, this will show Theorem~\ref{thm: correlation_decay}. We begin with \eqref{eq: correlation_term_1}. Given $\epsilon>0$ and $r$ fixed, we use the fact that all discrete spins have limits as $t \to\infty$ (see \eqref{eq: discrete_limit}) to find $T>0$ such that $\mathbb{P}(A_T) > 1-\epsilon/4$, where $A_T$ is defined by the conjunction of the following two conditions:
\begin{enumerate}
\item $\sigma_x(t) = \sigma_x(T)$ for all $t \geq T$ and all $x$ with $\text{dist}(x,o) \leq r$ and
\item $\sigma_x^{(1)}(t) = \sigma_x^{(1)}(T)$ for all $t \geq T$ and all $x$ with $\text{dist}(x,o) \leq r$.
\end{enumerate}
(Note that this event also depends on $M$, the radius of resampling, but the probability of $A_T^c$ is bounded by twice the probability that the first condition does not hold, and this is independent of $M$.) In view of the conditions defining $A_T$ for our $T$ fixed above, to guarantee that $\sigma_x(\infty) = \sigma_x^{(1)}(\infty)$ for all $x$ with $\text{dist}(x,o) \leq r$, we will show that $M \gg r$ can be chosen so that $\mathbb{P}(B_{T,M}) > 1-\epsilon/4$, where
\[
B_{T,M} = \{\sigma_x(t) = \sigma_x^{(1)}(t) \text{ for all } t < T \text{ and } x \text{ with dist}(x,o)\leq r\}.
\]
The proof is again a ``speed of propagation'' argument, similar to that given for \eqref{eq: finite_time_propagation_dominance}. If $B_{T,M}$ does not occur, then there must exist a chronological path for $[0,T]$ starting from a vertex at distance $M$ from o and ending at a vertex at distance $r$ from o. Such a path must have at least $M-r+1$ many vertices and there are $3 \cdot 2^{r-1}$ many possible ending points. So by Lemma~\ref{lem: chronological},
\[
\mathbb{P}_p(B_{T,M}^c) \leq 5e^{4T} 2^{r-1}\left( \frac{4}{5}\right)^{M-r+1}.
\]
For our fixed $r$ and $T$, then, we may choose $M$ so large that $\mathbb{P}(B_{T,M}) > 1-\epsilon/4$. On $A_T \cap B_{T,M}$, one has $\sigma_x(\infty) = \sigma_x^{(1)}(\infty)$ for all $x$ with $\text{dist}(x,o) \leq r$, so for this $M$, the probability of \eqref{eq: correlation_term_1} is bounded by $\epsilon/2$.

Moving to \eqref{eq: correlation_term_2}, we may simply apply Lemma~\ref{lem: resample_all} finitely many times (resampling the initial spin and clock at each vertex $x$ with $\text{dist}(x,o) \leq M$) to see that, a.s., 
\[
\# \Phi(\sigma^{(2)}(0),\omega^{(2)}) \Delta \Phi(\sigma(0),\omega) < \infty. 
\]
Therefore for our $M$, the probability in \eqref{eq: correlation_term_2} is at most $\epsilon/2$ for all large enough $R$. Combining this with the result of the previous paragraph, we conclude that for our given $r$, if $M$ is chosen as above and $R$ is sufficiently large, then $|\mathbb{P}_p(A\cap B) - \mathbb{P}_p(A)\mathbb{P}_p(B)|$ (from above \eqref{eq: correlation_term_1}) is at most $\epsilon/2 + \epsilon/2 = \epsilon$. This shows that $\alpha_{r,R}(p) \to 0$ as $R \to \infty$ and completes the proof.
\end{proof}

\section{Diagreement clusters: proof of Theorem~\ref{thm: disagreement_clusters}}\label{sec: disagreement_clusters}

To study disagreement clusters, we first note that item 3 of Lemma~\ref{lem: continuous_limit} states that each vertex in $\mathcal{V}$ must a.s. agree with at least one of its neighbors. Therefore there are no vertices of degree three in the disagreement graph. This means that there are only three possibilities for a component in the graph: (a) it is equal to a finite line segment, as in the claim of Theorem~\ref{thm: disagreement_clusters}, (b) it is a one-sided infinite path, so that there is a unique vertex $x$ of degree one in the component, and all other vertices have degree two, or (c) all vertices of the component have degree two. We must rule out possibilities (b) and (c), and we will do so by mass transport arguments.

Suppose that 
\begin{equation}\label{eq: type_b_assumption}
\mathbb{P}(o \text{ is in a component of type (b)})>0.
\end{equation}
Because each component of type (b) has a unique vertex of degree one, we can define the following mass transport. For $x,y \in \mathcal{V}$, set
\[
m(x,y) = \begin{cases}
1 & \quad \text{if } x \text{ is in a component of type (b) and } \\
& \qquad y \text{ is the vertex of degree one in its component} \\
0 & \quad \text{otherwise}.
\end{cases}
\]
Note that $\sum_{y\in \mathcal{V}} m(o,y) \leq 1$. However, on the event that $y$ is a vertex of degree one in a component of type (b), all vertices of its component send $y$ a unit mass. Under assumption \eqref{eq: type_b_assumption}, the root $o$ is such a vertex $y$ with positive probability, so one has $\sum_{x\in \mathcal{V}} m(x,o) = \infty$ with positive probability, and this violates the mass transport principle:
\[
1 \geq \mathbb{E}\sum_{y\in \mathcal{V}} m(o,y) = \mathbb{E} \sum_{x\in \mathcal{V}} m(x,o) = \infty.
\]
We conclude that components of type (b) cannot exist; that is, \eqref{eq: type_b_assumption} must have been false.

Next, if possible, assume that
\begin{equation}\label{eq: type_c_assumption}
\mathbb{P}(o \text{ is in a component of type (c)})>0.
\end{equation}
To reach a contradiction from this assumption, first apply item 3 of Lemma~\ref{lem: continuous_limit} again to see that if $o$ is in a component of type (c), since $U_o(\infty)$ equals the median of its neighboring spins, and it disagrees with two neighbors exactly, one neighbor must have a strictly higher value, and one must have a strictly lower value. Applying this to all vertices on the doubly-infinite path which constitutes the component of $o$ shows that any such component must be a strictly monotone path. In other words, it is a doubly infinite path indexed by vertices, say, $ \ldots, x_{-2},x_{-1},x_0, x_1, x_2, \ldots$ such that $U_{x_i}(\infty) < U_{x_{i+1}}(\infty)$ for all $i$. For any such component $\mathcal{C}$, we define
\[
S(\mathcal{C}) = \sup_{i \in \mathbb{Z}} U_{x_i}(\infty) \quad \text{and} \quad I(\mathcal{C}) = \inf_{i \in \mathbb{Z}} U_{x_i}(\infty).
\]
Writing $\mathcal{C}(x)$ for the component containing the vertex $x$, we can therefore under assumption \eqref{eq: type_c_assumption} find two numbers $r_1, r_2 \in [0,1]$ with $r_1 < r_2$ such that
\begin{equation}\label{eq: inf_sup_pinning}
\mathbb{P}(\mathcal{C}(o) \text{ is of type (c), } S(\mathcal{C}(o)) > r_2 \text{ and } I(\mathcal{C}(o)) < r_1)>0.
\end{equation}
Note that for any component $\mathcal{C}$ of type (c) satisfying the conditions of \eqref{eq: inf_sup_pinning} (that is, $S(\mathcal{C}) > r_2$ and $I(\mathcal{C}) < r_1$), there is exactly one vertex $p \in \mathcal{C}$ such that both $U_p(\infty) \geq \frac{r_1+r_2}{2}$ and one neighbor $z$ of $p$ has $U_z(\infty) < \frac{r_1+r_2}{2}$. Writing $p(\mathcal{C})$ for this unique vertex, we set up the mass transport as in the case of type (b) components. Namely, set
\[
m(x,y) = \begin{cases}
1 & \quad \text{if } x \text{ is in a component of type (c) and } y = p(\mathcal{C}(x)) \\
0 & \quad \text{otherwise}.
\end{cases}
\]
Once again, the total mass sent out by a vertex is at most 1, as $p(\mathcal{C})$ is uniquely determined for a component $\mathcal{C}$ of type (c); that is, $\sum_{y\in \mathcal{V}} m(o,y) \leq 1$. However on the event that $o$ is a vertex of the type $p(\mathcal{C})$ for a component $\mathcal{C}$ of type (c) satisfying $S(\mathcal{C}) > r_2$ and $I(\mathcal{C})<r_1$, the total mass entering $o$ is infinite. Due to \eqref{eq: inf_sup_pinning}, therefore, the total mass entering $o$ is infinite with positive probability, so we conclude with the same contradiction as in case (b):
\[
1 \geq \mathbb{E}\sum_{y\in \mathcal{V}} m(o,y) = \mathbb{E}\sum_{x\in \mathcal{V}} m(x,o) = \infty.
\]
This shows that components of type (c) cannot exist, and this concludes the proof of Theorem~\ref{thm: disagreement_clusters}.

\section{Howard's question: proof of Theorem~\ref{thm: spin_chains}}\label{sec: howard_question}
To prove Theorem~\ref{thm: spin_chains}, we first prove Theorem~\ref{thm: infinitely_many_ends}.
\begin{proof}[Proof of Theorem~\ref{thm: infinitely_many_ends}]
Every vertex in $\mathbb{G_+}$ has degree at least two in $\mathbb{G_+}$ by Lemma~\ref{lem: projection} and item 3 of Lemma~\ref{lem: continuous_limit}. Therefore, every component of $\mathbb{G_+}$ has at least two ends. Since $\mathbb{G_+}$ is the subgraph induced by an invariant site percolation $\{ x \in \mathcal{V}: \sigma_x(\infty) = +1 \}$ on $\mathbb{T}_3$, it follows  (see  \cite[Theorem~1.2]{Haggstrom3}) that a.s., each component of $\mathbb{G}_+$ has either exactly two ends or infinitely many ends. Note that if a component of  $\mathbb{G}_+$ has either exactly two ends, then it must be a self-avoiding doubly infinite path. Below we will rule out such a possibility and  the theorem will then follow immediately. 

Suppose, if possible, on an event $F$ with positive probability,  a component of $\mathbb{G}_+$ is a self-avoiding doubly infinite path, say $\Gamma$.
Consider the coupling between the median process and the majority dynamics with initial bias $p$ as stated in Lemma~\ref{lem: projection}.
By Theorem~\ref{thm: continuous_theorem}, the  continuous spin of each vertex only flips finitely often a.s.\  and hence, 
\[ V(\mathbb{G}_+) = \{ x \in \mathcal{V}: U_x(\infty) \le p \} \quad \text{a.s.}  \]
We claim that on the event $F$,  $U_x(\infty)   =  U_y(\infty)$ for all $x, y \in \Gamma$. If not, there exists $x \in \Gamma$ and a neighbor $y \in \partial x \cap \Gamma$ of $x$ on $\Gamma$ such that $U_x( \infty) < U_y( \infty)$. Let $z \in \partial x$ be the unique neighbor of $x$ that does not belong to the path $\Gamma$. Since $z \not \in V(\mathbb{G}_+)$, we have $U_z( \infty) > p$ and hence, $U_x( \infty) < U_z( \infty).$  This implies that with positive probability, the continuous spin of the vertex $x$ lies strictly below the spins of two of its neighbors for all large time, i.e.,  
\[ U_x(t) < \min ( U_y(t), U_z(t) )  \text{ for all large } t.\]
This is clearly a contradiction to the median update of the continuous spin at $x$ for the rings at $x$ that occurs after sufficiently  large time. So, the claim follows. Consequently, on the event $F$, the agreement cluster for any vertex in $\Gamma$ is $\Gamma$ itself and in particular, is infinite. But this is in contradiction to Theorem~\ref{thm: finite_agreement_clusters}, which says that a.s.\ all agreement clusters are finite.

\end{proof}

Given Theorem~\ref{thm: infinitely_many_ends}, we are now ready to prove Theorem~\ref{thm: spin_chains}.
\begin{proof}[Proof of Theorem~\ref{thm: spin_chains}]
Noting that the event $\{o \in \mathbb{T}^{+\text{chains}}(n)\}$ is monotone in $n$, to show Theorem~\ref{thm: spin_chains}, we must show that
\begin{equation}\label{eq: spin_chains_to_show}
\mathbb{P}_p(o \in \mathbb{T}^{+\text{chains}}(n) \text{ for all large }n) = \mathbb{P}_p(o \in \mathbb{T}^{+\text{chains}}).
\end{equation}
By symmetry, the same will hold for minus spin chains. The inequality $\leq$ is obvious, so we we need only show the inequality $\geq$, and it is trivial unless $p > p_c$. It will quickly follow from Lemma~\ref{cor: percolation}. Fix $p \in (p_c,1]$ (so that the right side of \eqref{eq: spin_chains_to_show} is positive) and consider the discrete spin model using distribution $\mathbb{P}_p$. Define an invariant bond percolation $\eta$ on $\mathbb{T}_3$ as follows. We set $\eta(e) = 1$ for an edge $e = \{x,y\}$ if $\sigma_x(\infty) = \sigma_y(\infty) = +1$, and $\eta(e) = 0$ otherwise. 
 If $\mathbb{G}_+$ denotes the subgraph of $\mathbb{T}_3$ induced by vertices with their limiting spins being equal to $+1$, then $\eta(e)=1$ if and only if $e \in E(\mathbb{G}_+)$.

We now define a  sequence of invariant bond percolations $(\eta_n)$ by setting  $\eta_n(e) = 1$ for an edge $e = \{x,y\}$ if $\sigma_x(t) = \sigma_y(t) = +1$ for all $t \geq n$ and $\eta_n(e)=0$ otherwise. Note that the hypotheses of Lemma~\ref{cor: percolation} hold for  $(\eta_n)$ and $\eta$. In other words, a.s.\ for all $e \in \mathcal{E}$, one has $\eta_n(e) \leq \eta_{n+1}(e) \leq \eta(e)$, and $\lim_{n \to \infty} \mathbb{P}_p(\eta_n(e) = 1) = \mathbb{P}_p(\eta(e) = 1)$. Therefore, by Lemma~\ref{cor: percolation}, we  conclude  that if $\mathbb{P}_p(A_\eta(o),D_\eta(o))>0$, then
\begin{equation}\label{eq: percolation_conclusion}
\lim_{n \to \infty} \mathbb{P}_p(D_{\eta_n}(o)\mid A_\eta(o),D_\eta(o)) = 1,
\end{equation}
where the events $A_{\eta}(o), D_\eta(o)$ and $D_{\eta_n}(o)$ are as defined in  Lemma~\ref{cor: percolation}.

Now we show that $\mathbb{P}(A_\eta(o),D_\eta(o))>0$, so that we can conclude \eqref{eq: percolation_conclusion}. 
For the majority dynamics, 
\[ D_\eta(o) =  \{ \sigma_o(\infty) = +1\} =  \{o \in \mathbb{T}^{+\text{chains}} \} \ \ \text{a.s.} \]
In Theorem~\ref{thm: infinitely_many_ends}, we have shown that a.s.\ each nonempty component of $\mathbb{G}_+$ has infinitely many ends. 
Hence,  a.s., $D_\eta(o)$ occurs if and only if $A_\eta(o)$ occurs, and so
\begin{equation}\label{eq: almost_conclusion}
\mathbb{P}_p(A_\eta(o),D_\eta(o)) = \mathbb{P}_p(D_\eta(o)) = \mathbb{P}_p(\sigma_o(\infty)=+1).
\end{equation}
The right side is positive (as $p>p_c$), and so \eqref{eq: percolation_conclusion} holds.

Last, we note that $D_{\eta_n}(o)$ implies that $o \in \mathbb{T}^{+\text{chains}}(n)$, so using monotonicity of the $\eta_n$'s, \eqref{eq: percolation_conclusion}, and \eqref{eq: almost_conclusion},
\begin{align*}
\mathbb{P}(o \in \mathbb{T}^{+\text{chains}}(n) \text{ for all large }n) &\geq \mathbb{P}(D_{\eta_n}(o) \text{ occurs for all large }n) \\
&\geq \lim_{n \to \infty} \mathbb{P}(D_{\eta_n}(o) \cap A_\eta(o) \cap D_\eta(o)) \\
&= \mathbb{P}(A_\eta(o),D_\eta(o)) \\
&=\mathbb{P}(o \in \mathbb{T}^{+\text{chains}}).
\end{align*}
This shows the inequality $\geq$ in \eqref{eq: spin_chains_to_show}, and completes the proof of Theorem~\ref{thm: spin_chains}.
\end{proof}

\section{An invariant percolation result: proof of Lemma~\ref{cor: percolation}}\label{sec: percolation}
We will continue to consider the graph $\mathbb{T}_3$, but the results of this section extend to $d$-regular trees $\mathbb{T}_d$ for $d \geq 3$. The notation used in this section will be that which was introduced in Section~\ref{sec: tools}.

Our main result shows that if the density of 1's in a sequence $(\eta_n)$ of invariant bond sub-percolation processes of another invariant bond percolation process $\eta$ converges to that of $\eta$, and if $\eta$ has open clusters with at least three ends with positive probability, then for large $n$, $\eta_n$ has infinite open clusters with positive probability. This result will imply Lemma~\ref{cor: percolation}, which we will prove afterward.
\begin{prop}\label{prop: percolation}
Let $\eta$ be an invariant bond percolation process on $\mathbb{T}_3$ and let $(\eta_n)$ be a sequence of invariant bond percolation processes on $\mathbb{T}_3$ such that 
\begin{enumerate}
\item $\eta_n \leq \eta$ a.s. for all $n$, and
\item for any $e \in \mathcal{E}$, $\lim_{n\to\infty} \mathbb{P}(\eta_n(e) = 1) = \mathbb{P}(\eta(e)=1)$.
\end{enumerate}
Then if $\mathbb{P}(A_\eta(o))>0$, one has
\[
\lim_{n\to\infty} \mathbb{P}(\#C_{\eta_n}(o) = \infty \mid A_\eta(o)) = 1.
\]
\end{prop}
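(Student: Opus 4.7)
My plan is to argue by contradiction using the end-classification theorem for invariant percolation on trees together with a mass transport argument. Assume for contradiction that $\mathbb{P}(F_n) \geq \delta > 0$ along a subsequence, where $F_n := A_\eta(o) \cap \{\#C_{\eta_n}(o) < \infty\}$. By \cite[Theorem~1.2]{Haggstrom3}, on $A_\eta(o)$ the $\eta$-cluster $C_\eta(o)$ has infinitely many ends almost surely, so every finite subcluster $D \ni o$ of $C_\eta(o)$ has a nonempty edge-boundary inside $C_\eta(o)$ with at least one edge leading to an infinite component of $C_\eta(o) \setminus D$.

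Let $B_n := \{e \in \mathcal{E} : \eta(e) = 1, \eta_n(e) = 0\}$. Hypotheses (1) and (2), combined with invariance, give that the common marginal $\varepsilon_n := \mathbb{P}(e \in B_n)$ tends to zero. Since $C_{\eta_n}(o)$ is obtained from $C_\eta(o)$ by deleting $B_n$-edges, on $F_n$ the boundary $\partial C_{\eta_n}(o)$ inside $C_\eta(o)$ is a nonempty finite subset of $B_n$, at least one edge of which crosses into an infinite component. Using the invariant tie-breaking from Section~\ref{sec: tools}, I will pick for each such finite $\eta_n$-cluster $C$ a canonical exit edge $e^{\star}(C) \in B_n \cap \partial C$ leading to infinity and a canonical distinguished vertex $\phi(C) \in C$. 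Two mass transports then give the identities
\[
\mathbb{P}\bigl(F_n,\, o = \phi(C_{\eta_n}(o))\bigr) \leq 3\varepsilon_n
\quad\text{and}\quad
\mathbb{P}(F_n) = \mathbb{E}\bigl[\,|C_{\eta_n}(o)|\, \mathbf{1}_{F_n,\, o=\phi(C_{\eta_n}(o))}\bigr],
\]
obtained by sending unit mass respectively from $\phi(C)$ to the exterior endpoint of $e^{\star}(C)$, and from each vertex of $C$ to $\phi(C)$.

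The main obstacle is that combined, these identities only give $\mathbb{P}(F_n) \leq 3\varepsilon_n \, \mathbb{E}[|C_{\eta_n}(o)| \mid F_n,\, o=\phi]$, and the cluster size $|C_{\eta_n}(o)|$ is a priori unbounded as $n\to\infty$. The resolution I envision is a truncation-and-bootstrap argument: the first identity directly yields $\mathbb{P}\bigl(F_n \cap \{|C_{\eta_n}(o)| \leq R\}\bigr) \leq 3 R \varepsilon_n \to 0$ for each fixed $R$, while on the complementary event the infinitely-many-ends structure of $C_\eta(o)$ forces a large edge-boundary $\partial C_{\eta_n}(o) \subseteq B_n$, producing via an additional mass transport a bound $\mathbb{P}\bigl(F_n \cap \{|C_{\eta_n}(o)| > R\}\bigr) \leq h(R)\, \mathbb{P}(F_n)$ with $h(R) \to 0$ as $R \to \infty$ uniformly in $n$. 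Iterating this self-improving inequality yields $\mathbb{P}(F_n) \to 0$ and closes the contradiction.
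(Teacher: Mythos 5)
Your first two mass transport identities are correct, and the overall BLPS flavor matches the paper's strategy. But the bootstrap step contains a genuine gap that I do not think can be closed as stated, and it is precisely the point where the paper introduces an extra idea you are missing.

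The problematic claim is that ``the infinitely-many-ends structure of $C_\eta(o)$ forces a large edge-boundary'' of a large finite cluster $C_{\eta_n}(o)$, and hence $\mathbb{P}\bigl(F_n \cap \{|C_{\eta_n}(o)| > R\}\bigr) \leq h(R)\,\mathbb{P}(F_n)$ with $h(R) \to 0$ uniformly in $n$. Neither part is true. First, a large finite subtree of $C_\eta(o)$ need not have a large boundary inside $C_\eta(o)$: if $C_\eta(o)$ contains an infinite ray on which none of the intermediate vertices are triple points, then $C_{\eta_n}(o)$ could be $o$ together with an arbitrarily long segment of that ray, and its boundary in $C_\eta(o)$ stays bounded (two or three edges) no matter how long the segment is. The degree-counting fact that a size-$m$ subtree has boundary $m+2$ holds only inside the $3$-regular tree, not inside the subtree $C_\eta(o)$. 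Second, even granting a boundary bound, a uniform-in-$n$ conditional tightness $\mathbb{P}(|C_{\eta_n}(o)| > R \mid F_n) \to 0$ is dubious: as $n$ grows the event $F_n$ becomes increasingly rare and conditions $C_{\eta_n}(o)$ to be large, so there is no a priori reason for the conditional law of $|C_{\eta_n}(o)|$ to be tight uniformly in $n$.

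The paper resolves this obstruction in two steps that you have omitted. It first passes from $\eta$ to the \emph{triple-point graph} $T$ of $\eta$ (vertices are triple points of $\eta$-clusters; edges join adjacent triple points along $\eta$-open paths), and observes that $T$ is a.s.\ a disjoint union of $3$-regular trees. Finiteness of $C_{\eta_n}(o)$ then translates into finiteness of an $n$-cluster $\mathsf{C}_n(x)$ in $T$, where the size-$m$-implies-boundary-$m+2$ combinatorics actually holds. Second, rather than trying to prove a uniform tightness, it uses a geometrically weighted mass transport: on each finite $n$-cluster of $T$ it chooses a random center, orients edges toward it, and sends from every vertex a unit flow backward that splits evenly at each step, so that the mass received along a backward path of length $\ell$ is $2^{-\ell}$. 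The total expected mass received at any vertex is then bounded by a geometric series (hence by a universal constant), independently of how large the cluster is, and one gets directly $\mathbb{P}(\#\mathsf{C}_n(x) < \infty \mid x \in V(T)) \leq 3\,\mathbb{P}(x \text{ has an incident $n$-closed edge} \mid x \in V(T))$, which is then driven to $0$ by hypothesis (2) and a finite-range truncation. You would need to incorporate both the passage to $T$ and a weighted (rather than uniform) mass transport for your outline to close.
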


\begin{rem}
For $\eta \equiv 1$, this result reduces to that of H\"aggstr\"om \cite[Theorem~1.6]{Haggstrom3}, which guarantees that invariant percolation on $\mathbb{T}_3$ with high enough density of open edges must contain an infinite open cluster with positive probability.
\end{rem}

\begin{proof}
The proof is a modification of the argument from \cite[Section~4]{BLPS}. We first remark that a.s., if an open component of $\eta$ has at least three ends, it has infinitely many ends, and furthermore has no isolated ends. One way to say this  on $\mathbb{T}_3$ is in terms of triple points. 
Note that an open component has at least three ends if and only if it has a triple point, and it has infinitely many ends if and only if it has infinitely many triple points. Therefore, under the assumption that $A_\eta(o)$ has positive probability, $o$ has positive probability to be a triple point for $\eta$, since  ``everything shows up at the root'' (see \cite[Lemma~2.3]{aldous_lyons}). We may then state the isolated ends property as follows: a.s., if $\Gamma$ is an infinite self-avoiding path contained in an open component of $\eta$, then $\Gamma$ must contain a triple point (and therefore infinitely many). This is a standard fact that follows, for instance, from \cite[Lemma~4.2]{BLPS}.

So given an outcome, we construct the {\em triple-point graph} $T$ of $\eta$ as follows. The vertex set $V(T)$ of $T$ is the set of $v \in \mathcal{V}$ such that $v$ is a triple point of $\eta$, and the edge set $E(T)$ is the set of $\{v,w\}$ with $v \neq w$ such that $v,w \in V(T)$, $w \in C_\eta(v)$, and there are no other triple points of $\eta$ on the unique (vertex) self-avoiding path of $\mathbb{T}_3$ from $v$ to $w$. (That is, $w$ is the closest triple point to $v$ on the infinite ray of $\mathbb{T}_3$ starting from $v$ and containing $w$.) Because a.s., each open component of $\eta$ that contains a triple point has no isolated ends, the triple point graph $T$ is a.s.\ a disjoint union of $3$-regular trees.

We now define a sequence of  bond percolation processes on the triple-point graph $T$ constructed  from $\eta_n$. Let us say that an edge $\{v,w\}$ of $E(T)$ is $n$-open if all edges on the unique (vertex) self-avoiding path from $v$ to $w$ in the original tree $\mathbb{T}_3$ are open in $\eta_n$. Otherwise, it is called $n$-closed. Given $v \in V(T)$, we can similarly define the $n$-cluster of $v$, written $\mathsf{C}_n(v)$, as the subgraph of $T$ whose vertices are the $w \in V(T)$ such that there is an $n$-open path from $w$ to $v$ in $T$, and whose edges are those $\{w,z\}$ which are $n$-open and whose endpoints are in the vertex set $V(\mathsf{C}_n(v))$ of $\mathsf{C}_n(v)$. We orient the edges of the finite $n$-clusters toward ``random centers'' as follows. 
We let $(\xi_v)_{v \in \mathcal{V}}$ be a family of i.i.d.\ uniform $[0,1]$ random variables associated to the vertices of the original graph $\mathbb{T}_3$, and for a finite $n$-cluster $\mathsf{C}$, we select $v_{\mathsf{C}}$ to be the vertex of $\mathsf{C}$ in $T$ with the largest value of $\xi$. Then every edge of finite $n$-cluster $\mathsf{C}$ is oriented ``towards" $v_{\mathsf{C}}$. In other words, an edge $\{v,w\}$ in this $n$-cluster is oriented as $(v,w)$, where $w$ is the unique vertex of $\{v,w\}$ such that there is a self-avoiding path in $\mathsf{C}$ from $w$ to $v_{\mathsf{C}}$ that does not contain $v$.
The orientations depend on the process $\eta_n$, so we will refer to them as $n$-orientations in the edge set $E(T)$. Now if $v \in V(T)$ is a vertex with a finite $n$-cluster, we define the backward $n$-cluster of $v$ to be the directed graph whose vertices are those $w$ such that there is an $n$-oriented path from $w$ to $v$ in $V(T)$ and whose edge set consists of those edges which are $n$-oriented and whose endpoints are vertices of the backward $n$-cluster of $v$. By this definition, each $v \in V(T)$ with a finite $n$-cluster is a vertex of its own (finite) backward $n$-cluster. For such $n$, we may then define the set of (external) ``backward $n$-leaves'' of $v$ to be the vertices $w \in V(T)$ such that there is an edge $\{w,z\} \in E(T)$ which is $n$-closed and $z$ is in the backward $n$-cluster of $v$.

Based on these $n$-orientations, we define a mass transport $m_n$ on $\mathbb{T}_3$. If $v \in V(T)$ has a finite $n$-cluster, we send a unit flow (the $n$-flow) toward the backward $n$-leaves of $v$. That is, $v$ sends the flow backward along oriented edges, and when it reaches a vertex $w$ in the backward $n$-cluster of $v$, the flow splits evenly among those vertices $z \in V(T)$ such that either $z$ is a backward $n$-leaf of $v$ or that $\{z,w\}$ is $n$-oriented from $z$ to $w$. In this way, the total flow received at $w$ from $v$ is at most $2^{-l(v,w)}$, where $l(v,w)$ is the number of edges in the unique self-avoiding path from $v$ to $w$ in $T$. Now the mass transport $m_n(x,y)$ is defined as the magnitude of the $n$-flow received at $y$ from $x$ if $x,y \in V(T)$, the $n$-cluster of $x$ is finite, and $y$ is a backward $n$-leaf for $x$. 

Note that the total mass sent out by an $x \in \mathcal{V}$ is 1 if $x$ is in $V(T)$ and in a finite $n$-cluster, and 0 otherwise. 
Therefore for any $n$,
\[
\mathbb{E}\sum_{y\in \mathcal{V}} m_n(x,y) = \mathbb{P}(x \in V(T),~\#\mathsf{C}_n(x) < \infty).
\]
The total mass received by a vertex $x$ is only nonzero if it is in $V(T)$, and is a backward $n$-leaf of another vertex in $V(T)$ whose $n$-cluster is finite. In this case, $x$ can receive mass from vertices in at most three $n$-clusters (those of its neighbors), and only if $x$ is a backward $n$-leaf of a vertex in that $n$-cluster. Furthermore, an edge incident to $x$ in $T$ must be $n$-closed. In each of these $n$-clusters corresponding to a neighbor $z$, $x$ therefore receives mass only from vertices in the forward oriented path starting at $z$ (and not from itself). Thus the expected mass received is at most
\begin{align*}
\mathbb{E}\sum_{y\in \mathcal{V}} m_n(y,x) &\leq \mathbb{E}\left[ 3 \cdot \sum_{k=1}^\infty 2^{-k}  \mathbf{1}_{\{x \in V(T),~x \text{ has an incident }n\text{-closed edge}\}} \right] \\
&= 3\mathbb{P}(x \in V(T),~x \text{ has an incident }n\text{-closed edge}).
\end{align*}
By the mass transport principle, the expected mass in equals the expected mass out, so
\begin{align}
&\mathbb{P}(\#\mathsf{C}_n(x)<\infty \mid x \in V(T)) \nonumber \\
\leq ~& 3\mathbb{P}(x \text{ has an incident }n \text{-closed edge} \mid x \in V(T)). \label{eq: tacos_1}
\end{align}
Recalling that $\text{dist}$ denotes the graph distance on $\mathbb{T}_3$ and letting $E_M(n;x)$ be the event that $\eta(e) \neq \eta_n(e)$ for some $e \in \mathcal{E}$ with $\text{dist}(e,x) \leq M$ (here $\text{dist}(e,x)$ is the minimal distance from $x$ to an endpoint of $e$), then
\begin{align}
&\mathbb{P}(x \text{ has an incident } n\text{-closed edge} \mid x \in V(T)) \nonumber \\
\leq ~& \mathbb{P}(x \text{ has a neighbor } z \text{ in }V(T) \text{ with } \text{dist}(x,z) \geq M \mid x \in V(T)) \label{eq: tacos_2}\\
+~&\mathbb{P}(E_M(n;x) \mid x \in V(T)). \label{eq: tacos_3}
\end{align}
The probability in \eqref{eq: tacos_2} can be made small by choosing $M$ large. For any given $M$, the probability in \eqref{eq: tacos_3} converges to 0 as $n \to \infty$ by item 1 in the statement of Proposition~\ref{prop: percolation}. Therefore \eqref{eq: tacos_1} converges to 0 as $n \to \infty$; that is,
\begin{equation}\label{eq: two_ends}
\lim_{n\to\infty} \mathbb{P}(\#\mathsf{C}_n(x)<\infty \mid x \in V(T)) = 0.
\end{equation}

Now to prove Proposition~\ref{prop: percolation}, we will show that
\[
\lim_{n\to\infty} \mathbb{P}(A_\eta(o), \#C_{\eta_n}(o)<\infty) = 0.
\]
We decompose this probability for $M>0$ as
\begin{align}
&\mathbb{P}(A_\eta(o), \#C_{\eta_n}(o) < \infty) \\
\leq~&\mathbb{P}(A_\eta(o),~\text{there is no }x \in V(T) \cap C_\eta(o) \text{ with } \text{dist}(o,x) \leq M) \label{eq: tacos_4}\\
+~&\mathbb{P}(E_M(n;o)) \label{eq: tacos_5}\\
+~&\sum_{x \in \mathcal{V} : \text{dist}(o,x) \leq M} \mathbb{P}(A_\eta(o),\#C_{\eta_n}(o) < \infty, x \in V(T) \cap C_\eta(o),E_M(n;o)^c). \label{eq: tacos_6}
\end{align}
Again, if $M$ is large, the probability in \eqref{eq: tacos_4} can be made small, and for fixed $M>0$, the probability in \eqref{eq: tacos_5} converges to 0 as $n \to\infty$. So we must only show that for fixed $M>0$ and $x \in \mathcal{V}$ with $\text{dist}(o,x)\leq M$, the summand in \eqref{eq: tacos_6} corresponding to $x$ converges to 0 as $n\to\infty$. However, if $x \in V(T) \cap C_\eta(o)$ has $\text{dist}(o,x)\leq M$ and $E_M(n;o)^c$ occurs, then all edges in $C_\eta(o)$ connecting $o$ to $x$ are open in $\eta_n$. This means that $x \in V(T) \cap C_{\eta_n}(o)$ and thus $C_{\eta_n}(x) = C_{\eta_n}(o)$. If, in addition, $\#C_{\eta_n}(o) < \infty$, then $\#\mathsf{C}_n(x) < \infty$. Therefore
\[
\mathbb{P}(A_\eta(o), \#C_{\eta_n}(o) <  \infty,x \in V(T) \cap C_\eta(o),E_M(n;o)^c) \leq \mathbb{P}(\#\mathsf{C}_n(x) < \infty, x \in V(T)).
\]
But by \eqref{eq: two_ends}, this probability converges to 0, and this completes the proof.
\end{proof}

We are now ready to prove Lemma~\ref{cor: percolation} from Proposition~\ref{prop: percolation}. Note that its assumptions are stronger than those in Proposition~\ref{prop: percolation}: the $\eta_n$'s are here assumed to be monotone. 
\begin{proof}[Proof of Lemma~\ref{cor: percolation}]
By Proposition~\ref{prop: percolation}, one has $\lim_{n \to \infty} \mathbb{P}(\#C_{\eta_n}(o) = \infty \mid A_\eta(o)) = 1$, and therefore using monotonicity of the $\eta_n$'s,
\[
\mathbb{P}(\#C_{\eta_n}(o) = \infty \text{ for all large }n \mid A_\eta(o),D_\eta(o)) = \lim_{n \to \infty} \mathbb{P}(\#C_{\eta_n}(o) = \infty \mid A_\eta(o),D_\eta(o)) = 1.
\]
From this and monotonicity again, it suffices to show that
\begin{equation}\label{eq: corollary_to_show}
\mathbb{P}(\#C_{\eta_n}(o) = \infty \text{ for all large }n \text{ but } D_{\eta_n}(o)^c \text{ occurs for all }n \mid A_\eta(o), D_\eta(o)) = 0.
\end{equation}

So suppose that the event in \eqref{eq: corollary_to_show} occurs. Then if $n$ is large, $\#C_{\eta_n}(o) = \infty$, so there is an infinite self-avoiding open path $\Gamma$ starting from $o$ in $\eta_n$, but it cannot be extended in any ``backward'' direction because $D_{\eta_n}(o)$ does not occur. That is, if the first vertex of $\Gamma$ other than $o$ is $v$, then there is no self-avoiding infinite open path in $\eta_n$ starting from $o$ whose first vertex after $o$ is not $v$. If we increase $n$, then $\Gamma$ will remain an open path in $\eta_n$ (this is simply because the $\eta_n$'s are increasing), but we will show that another such infinite path will emerge that has its first vertex after $o$ not equal to $v$. These two paths will form a doubly infinite path in $\eta_n$ containing $o$, and will force $D_{\eta_n}(o)$ to occur. We will then conclude \eqref{eq: corollary_to_show}.

To formalize this argument, define, for any neighbor $v$ of $o$, the set $T_v$ of vertices $z$ such that there is a self-avoiding $\eta$-open path from $z$ to $o$ that contains $v$. In words, $T_v$ is the set of vertices in the ``subtree' of $o$ in $\eta$ beginning with $v$. Consider the event $A$ comprised of the following conditions:
\begin{enumerate}
\item $\#C_{\eta_n}(o)=\infty$ for all large $n$, and
\item for all large $n$, and for each neighbor $v$ of $o$ such that $T_v$ is infinite, there are infinitely many) vertices $z\in T_v$ for which $\#C_{\eta_n}(z) = \infty$.
\end{enumerate}
We will show that
\begin{equation}\label{eq: mass_transport_howard}
\mathbb{P}(A \mid A_\eta(o) \cap D_\eta(o) \cap \{\#C_{\eta_n}(o)=\infty\} \text{ occurs for all large }n) = 1.
\end{equation}
First, let us see why \eqref{eq: mass_transport_howard} implies \eqref{eq: corollary_to_show} and therefore completes the proof of Lemma~\ref{cor: percolation}. Assume that $A \cap A_\eta(o) \cap D_\eta(o) $ occurs but that for all $n$, $D_{\eta_n}(o)$ does not occur. As we said above, if $n$ is large enough so that $\#C_{\eta_n}(o)=\infty$, then $o$ is in an infinite self-avoiding open path $\Gamma$ in $\eta_n$ which visits a neighbor of $o$, say $v$, directly after leaving $o$. Because $D_\eta(o)$ occurs, there is another neighbor $w \neq v$ of $o$ such that $T_w$ is infinite. Because $D_{\eta_n}(o)$ does not occur, the set of vertices in $T_w$ which are connected by $\eta_n$-open edges to $o$ must be finite. Hence,  by condition 2 above, we can find a vertex $z$ in this $T_w$ that is not connected  by $\eta_n$-open edges to $o$ and for which $\#C_{\eta_n}(z)=\infty$. Note that $z$ that is not connected by $\eta_n$-open edges to $o$ and $z$ must be in an infinite $\eta_n$-open path $P_z$ that does not contain $o$. Letting $P$ be the path connecting $z$ to $o$ in $\mathbb{T}_3$, then $P$ is $\eta$-open, and by condition 3 above, all edges in $P$ will, for even larger $n$, be $\eta_n$-open. Since the configurations $\eta_n$ are nondecreasing, this implies that for large $n$, all edges in $P \cup P_z \cup \Gamma$ are open in $\eta_n$. From this union we can extract a doubly-infinite self-avoiding path in $\eta_n$ containing $o$ (since $\Gamma$ does not contain $z$ and $P_z$ does not contain $o$), and this contradicts the fact that for all $n$, $D_{\eta_n}(o)^c$ occurs. Therefore
\[
\{A \cap A_\eta(o) \cap D_\eta(o) \} \subseteq \{D_{\eta_n}(o) \text{ occurs for all large }n\}.
\]
Using \eqref{eq: mass_transport_howard}, the probability in \eqref{eq: corollary_to_show} is bounded above by
\[
\mathbb{P}(A^c \cap \{\#C_{\eta_n}=\infty\} \text{ occurs for all large }n \mid A_\eta(o), D_\eta(o)) = 0.
\]

We are left then to prove equation \eqref{eq: mass_transport_howard}, and this follows from a mass transport argument. Because the $\eta_n$'s are nondecreasing and $\lim_{n \to \infty} \mathbb{P}(\eta_n(e) = 1) = \mathbb{P}(\eta(e)=1)$, a.s., for any fixed $e \in \mathcal{E}$, $\eta_n(e) = \eta(e)$  for all large $n$. So we will show that if we define the event $E_n$ that either $\#C_{\eta_n}(o)<\infty$ or that both $\#C_{\eta_n}(o)=\infty$ and for each neighbor $v$ of $o$ such that $T_v$ is infinite, there are infinitely many vertices $z \in T_v$ such that $\#C_{\eta_n}(z)=\infty$, then
\begin{equation}\label{eq: condition_two_howard}
\mathbb{P}(E_n) = 1 \quad \text{for all }n.
\end{equation}
For a contradiction, suppose that the probability of \eqref{eq: condition_two_howard} is $<1$ for some $n$. 
We now define a mass transport as follows. Set
\[
m(x,y) = \begin{cases}
1 & \quad \text{if } y \in C_\eta(x)\text{ and } y \text{ is the closest vertex to } x\text{ such that } \#C_{\eta_n}(y)=\infty\\
0 & \quad \text{otherwise}
\end{cases}.
\]
(Here we use an invariant tie-breaking rule, as in Section~\ref{sec: tools}.) We will use the convention that if $\#C_{\eta_n}(x)=\infty$, then $m(x,x) = 1$. We have assumed that with positive probability, $\#C_{\eta_n}(o)=\infty$, but that there is a neighbor $v$ of $o$ such that $T_v$ is infinite but for which there are only finitely many vertices $z \in T_v$ such that $\#C_{\eta_n}(z)=\infty$. On this event, there is a vertex $w$ which receives infinite total mass. Because $o$ has positive probability to be such a vertex $w$ under our assumption that \eqref{eq: condition_two_howard} is false, we find that $\mathbb{E}\sum_{x\in \mathcal{V}} m(x,o) = \infty$. However $\sum_{x\in \mathcal{V}} m(o, x) \le 1$, and this contradicts the mass transport principle. We conclude that \eqref{eq: condition_two_howard} must be true, and therefore \eqref{eq: mass_transport_howard} holds. This completes the proof of Lemma~\ref{cor: percolation}.
\end{proof}


\section{Open problems}

Here we list some future questions.

\begin{enumerate}

\item Is $p \mapsto \theta(p)$ differentiable, especially at $p=p_c$? 

\item \cite[Question~13.6]{P19} Is $p_c = 1/2$ on the $d$-regular tree for $d \geq 4$? When $d$ is even, ties are broken typically using a fair coin, and we can still define
\[
p_c = \sup\{ p \in [0,1] : \mathbb{P}_p(\sigma_o(\infty) \text{ exists and equals } -1) = 1\}.
\]
For $d=3,5,7$, with synchronous updates (at integer times), Kanoria-Montanari \cite{KM11} showed that $p_c<1/2$.

\item For $d \geq 5$ and $d$ odd, is $\theta(p)$ a continuous function of $p$, especially at $p=p_c$?

\item Extend Theorem~\ref{thm: correlation_decay} to all $p \in [0,1]$ (instead of Lebesgue-a.e. $p$).

\item Can one prove an exponential rate of correlation decay in Theorem~\ref{thm: correlation_decay}?

\item From Theorem~\ref{thm: finite_agreement_clusters}, all agreement clusters in the median process are finite. What is the rate of decay of
\[
\mathbb{P}(\text{number of vertices in } \mathcal{A}_o \geq \lambda) \text{ as } \lambda \to \infty?
\]

\item Let $T$ be the last time that the spin at the root in the median process flips. What is the rate of decay of
\[
\mathbb{P}(T \geq \lambda) \text{ as } \lambda \to \infty?
\]
This question is also open for the discrete model. (See \cite[Sec.~3]{HowardTree}, where exponential decay is shown for $p$ close enough to 0 or 1.)

\end{enumerate}

\section*{Appendix}

First we prove Lemma~\ref{lem: chronological}, which bounds the probability of existence of long chronological paths.
\begin{proof}[Proof of Lemma~\ref{lem: chronological}]
If $\Gamma$ is a deterministic path starting from $o$ with $\ell$ many vertices, the time it takes for successive clock rings to occur along $\Gamma$ is at least $\sum_{i=1}^\ell \tau_i$, where the $\tau_i$'s are i.i.d.~exponential random variables with mean one. There are $4^{\ell-1}$ many paths starting from or ending at $o$ with $\ell$ many vertices, so by a union bound and the Markov inequality,
\begin{align*}
&\mathbb{P}(\exists \text{ chronological path with } \ell \text{ many vertices starting from or ending at }o \text{ for } [0,T]) \\
\leq~&4^{\ell-1} \mathbb{P}\left( \sum_{i=1}^\ell \tau_i \leq T\right) \\
=~& 4^{\ell-1} \mathbb{P}\left( \exp\left( - 4 \sum_{i=1}^\ell \tau_i \right) \geq e^{-4T}\right) \\
\leq~&4^{\ell-1} \frac{\mathbb{E}\exp\left( - 4 \sum_{i=1}^\ell \tau_i\right)}{e^{-4T}} 
= \frac{e^{4T}}{4} \left( \frac{4}{5} \right)^\ell.
\end{align*}
Summing this bound over $\ell \geq k$ gives the statement of the lemma.
\end{proof}

Next, we state and prove a result showing that if $q \in [0,1]$ has $\theta(q)>0$, then with positive probability, in the majority vote model with initial bias $q$, the root starts with spin $+1$ and never flips.

\begin{lem}\label{lem:never_flip}
Consider the majority dynamics on $\mathbb{T}_3$ with  initial spin configuration distributed according to the i.i.d.~product measure $\mu_q$ with $q \in [0,1]$  satisfying  $\theta(q) > 0$. Let us keep the spin one of the neighbors, say $x_0$, of the root $o$  frozen at $-1$ for all time. Then with positive probability, the spin at the root is $+1$ at $t=0$ and it never flips. Clearly, this event  depends only on the clocks and initial spins of the vertices in the subtree $\mathbb{T}_{o \to x_0}$. 
\end{lem}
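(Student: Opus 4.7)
The plan is to reduce the lemma to a positivity statement about an auxiliary median process on a binary subtree by exploiting the projection correspondence, and then to close the argument using the hypothesis $\theta(q)>0$.

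As a first reduction, I would extend the projection identity of Lemma~\ref{lem: projection} to the modified setting by introducing a modified median process on $\mathbb{T}_3$ in which $U_{x_0}(t)\equiv 1$ is held constant while every other vertex evolves by the usual median rule. A pathwise argument parallel to the one yielding \eqref{eq: commutative} shows that projection at level $q$ recovers the modified discrete dynamics, so it suffices to exhibit $\mathbb{P}(U_o(t)\leq q\text{ for all }t\geq 0)>0$ in this modified median process.

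Next I would unfold a sufficient condition recursively. The update at $o$ reads $U_o(t)=\mathrm{median}\{1,U_{x_{-1}}(t^-),U_{x_{+1}}(t^-)\}=\max(U_{x_{-1}}(t^-),U_{x_{+1}}(t^-))$, so if $U_o(0)\leq q$ and $U_{x_{\pm 1}}(t)\leq q$ for all $t$, then $U_o(t)\leq q$ for all $t$. Define auxiliary median processes $\rho^{-}$ and $\rho^{+}$ on the binary subtrees $\mathbb{T}_{x_{-1}\to o}$ and $\mathbb{T}_{x_{+1}\to o}$ respectively, each with the external parent clamped at the value $0$, using the same initial values and clocks as the modified median process on the respective subtrees. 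A direct induction mimicking the proof of \eqref{eq: commutative} shows that
\[
\{U_o(0)\leq q\}\cap\{\rho^{-}_{x_{-1}}(t)\leq q\text{ for all }t\}\cap\{\rho^{+}_{x_{+1}}(t)\leq q\text{ for all }t\}\subseteq\{U_o(t)\leq q\text{ for all }t\geq 0\}.
\]
The three events on the left are independent, depending on disjoint collections of initial values and clocks, so it suffices to prove that $\alpha:=\mathbb{P}(\rho^{-}_{x_{-1}}(t)\leq q\text{ for all }t)>0$.

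The heart of the argument is showing $\alpha>0$. For $q>1/2$ this is immediate: supercritical Bernoulli site percolation on the binary tree produces, with positive probability, a doubly-infinite initial path through $x_{-1}$ on which $U_v(0)\leq q$, and such a path is stable under the $\rho^{-}$ dynamics since each vertex on it always sees at least two neighbors with value $\leq q$. For $p_c<q\leq 1/2$ initial percolation fails, and here I would invoke $\theta(q)>0$. Concretely, I plan to couple $\rho^{-}$ to the standard median process on $\mathbb{T}_3$ and, using $\mathbb{P}(U_o(\infty)<q)=\theta(q)>0$ (an immediate consequence of Theorem~\ref{thm: continuity}) together with the finiteness of agreement clusters (Theorem~\ref{thm: finite_agreement_clusters}) and the infinite-ends property (Theorem~\ref{thm: infinitely_many_ends}), extract a positive-probability sub-event in which a stable $\{U\leq q\}$-structure through $x_{-1}$ inside $\mathbb{T}_{x_{-1}\to o}$ exists and is insensitive to the configuration on the $o$-side of $x_{-1}$; a mass-transport argument exploiting automorphism invariance of the subtree then transfers this to the clamped-boundary dynamics $\rho^{-}$. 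The main obstacle will be precisely this last step in the range $p_c<q\leq 1/2$: without an initial $\{U\leq q\}$-chain the stable structure must be produced by the median dynamics itself, and care will be needed in translating a structural event from the standard $\mathbb{T}_3$ process to the artificially boundary-clamped auxiliary process $\rho^{-}$.
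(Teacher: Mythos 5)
Your plan diverges from the paper's argument, but it has a genuine gap in the key coupling step, and the heart of the argument is left as a sketch. I'll address both.

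\textbf{The directional error in the subtree coupling.} You define $\rho^{\pm}$ on $\mathbb{T}_{x_{\pm 1}\to o}$ with the external parent (i.e.\ $o$) clamped at the value $0$. Since $0$ is the minimum possible spin, and the real parent value $U_o(t)$ always satisfies $U_o(t)\ge 0$, attractiveness gives $\rho^{\pm}_v(t)\le U_v(t)$ pointwise for every $v$ in the subtree. That is, the clamped processes are \emph{lower} bounds for the actual subtree dynamics, not upper bounds. Consequently, $\rho^{-}_{x_{-1}}(t)\le q$ tells you nothing about whether $U_{x_{-1}}(t)\le q$, and the claimed inclusion
\[
\{U_o(0)\leq q\}\cap\{\rho^{-}_{x_{-1}}(t)\leq q\text{ for all }t\}\cap\{\rho^{+}_{x_{+1}}(t)\leq q\text{ for all }t\}\subseteq\{U_o(t)\leq q\text{ for all }t\geq 0\}
\]
does not follow by induction on clock rings; the induction step requires $U_{x_{\pm 1}}(t^-)\le q$, which you cannot extract from $\rho^{\pm}_{x_{\pm 1}}(t^-)\le q$ when $\rho^{\pm}\le U$. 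To salvage the subtree decomposition you would need to clamp the parent at a value that \emph{dominates} $U_o$ throughout the run — e.g.\ at $q$, under the running hypothesis $U_o(s)\le q$ for $s<t$ — and then the induction does close. Clamping at $0$ is the wrong side of the coupling.

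\textbf{The unproved core.} Even with the clamp corrected, the key positivity $\alpha=\mathbb{P}(\rho^{-}_{x_{-1}}(t)\le q\text{ for all }t)>0$ is not established; you acknowledge yourself that translating a structural event from the whole-tree process to a boundary-clamped subtree process is ``the main obstacle,'' and the plan invoking finiteness of agreement clusters, infinite ends, and a mass-transport transfer is too vague to check. In particular, for $p_c<q\le 1/2$ you correctly note that an initially percolating $\{U\le q\}$-chain is unavailable, and no concrete mechanism producing a forever-$\le q$ boundary in the clamped process is given.

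\textbf{How the paper does it, more cheaply.} The paper avoids the subtree decomposition and clamped auxiliary processes entirely. Working in the unfrozen discrete process under $\mathbb{P}_q$, it applies the FKG inequality (Harris) to get
$\mathbb{P}_q(\sigma_{x_{-1}}(\infty)=+1,\ \sigma_{x_{1}}(\infty)=+1)\ge\theta(q)^2>0$,
hence for a large fixed $T$ the event $A=\{\sigma_{x_{\pm1}}(t)=+1\ \text{for all}\ t\ge T\}$ has positive probability. It then performs a \emph{monotone} modification at $o$ alone: force $\sigma_o(0)=+1$ and suppress $o$'s clock rings on $[0,T]$. Since this modification can only increase spins (it raises $\sigma_o$ to the maximum on $[0,T]$), attractiveness yields $\sigma'\ge\sigma$ pointwise for all time, so on the modified event $A'$ one still has $\sigma'_{x_{\pm1}}(t)=+1$ for $t\ge T$, and then $\sigma'_o(t)=+1$ for all $t$ because at each ring $o$ sees two $+1$ neighbors. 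This monotone-modification idea — make $o$'s spin as favorable as possible, then propagate via attractiveness — is the right replacement for your clamped subtree processes, and it produces a concrete positive-probability event rather than leaving a conditional positivity to be shown.
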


\begin{proof}
Let us denote the three neighbors of $o$ in $\mathbb{T}_3$ by $x_{-1}, x_0, x_1$. 
By a result of Harris \cite{Harris, liggett}, it follows that 
the measure  $\mu^t$ on $\{-1, 1\}^{ \mathcal{V}}$ describing the state $\sigma(t)$ of the system at time $t \in [0, \infty]$ possesses the FKG property; i.e., increasing functions of the spin variables are positively correlated. In fact, this follows from   the FKG property  of $\mu^0$ (which holds trivially since $\mu^0$  is an i.i.d.\ measure) and the attractiveness of the Markov process. Therefore, 
\[ \prob_q( \sigma_{x_{-1}}(\infty) = +1,  \sigma_{x_{1}}(\infty) = +1 ) \ge \prob_q( \sigma_{x_{-1}}(\infty) = +1) \prob_q( \sigma_{x_{1}}(\infty) = +1 )  = \theta^2(q)> 0.\]
Consequently, for some large fixed time $T$, 
 \[ \text{ the event }   A:= \{ \sigma_{x_{-1}}(t) = +1,  \sigma_{x_{1}}(t) = +1  \ \text{ for all } t \ge T \}  \  \text{ has positive probability}.\]
As before, let $\sigma(0)$ be the discrete spins at $t=0$ and $\omega$ be a realization of the Poisson clocks of the vertices in $\mathbb{T}_3$.   We define a modification operator $\Psi: (\sigma(0), \omega) \mapsto (\sigma'(0), \omega')$  by setting the initial spin at $o$ to be $+1$ and by suppressing all clock rings of $o$ in $[0, T]$ so that the first ring of the clock at $o$ happens after time $T$.  Let $A'$ be the event obtained from $A$ after applying this modification, i.e., $A' = \{ \Psi((\sigma(0), \omega) ): (\sigma(0), \omega) \in A\}$. Then $A'$ also has positive probability.

Since $1= \sigma'_o(t) \ge \sigma_o(t)$ for all $0 \le t \le T$,  we claim that $\sigma'_y(t) \ge \sigma_y(t)$ for all $0 \le t \le T$ and for each vertex $y$. Indeed, 
at the time of each clock ring at any $x \in \partial o$  in the interval $[0, T]$, the spin of  its neighbor $o$ in $(\sigma'(0), \omega')$ dominates that in $(\sigma(0), \omega)$. Therefore, 
we have $\sigma'_{x}(t)  \ge \sigma_{x}(t)$  for all $0 \le t \le T$ and for any $x \in \partial o$. Applying the argument iteratively to the vertices lying at distance $r=1,2, \ldots$ from $o$ yields the claim. 

Since $\sigma'_y(T) \ge \sigma_y(T)$ for all $y$ and the clock rings at every vertex are identical in $\omega$ and $\omega'$ after $T$, it follows from the attractiveness property of the majority dynamics that 
$\sigma'_x(t) \ge \sigma_x(t)$ for all $x$ and for all $t \ge T$. In particular,  $\sigma'_{x_{-1}}(t) = +1$ and  $\sigma'_{x_{1}}(t) = +1$ for all $ t \ge T$. Therefore, at the time of the each ring in $\omega_o'$ (which, by definition,  occurs after time $T$), the vertex $o$ has at least two neighbors with $+1$ spins. Hence, $\sigma'_o(t) =+1$ for all time $t$ after $T$. So, on the event $A'$, the spin at the root is $+1$ at $t=0$ and it never flips. The lemma follows. 
\end{proof}

\bigskip
\noindent
{\bf Acknowledgements.} AS thanks M. Bramson, E. Mossel, and O. Tamuz for helpful discussions.

\end{document}